\begin{document}
\setlength{\baselineskip}{0.55cm}	

\title{Les espaces de Berkovich sont ang\'eliques}
\author{J\'er\^ome Poineau}
\address{Institut de recherche math\'ematique avanc\'ee, 7, rue Ren\'e Descartes, 67084 Strasbourg, France}
\email{jerome.poineau@math.unistra.fr}
\thanks{L'auteur est membre du projet jeunes chercheurs \og Berko \fg\ de l'Agence Nationale de la Recherche.}

\date{\today}

\subjclass{14G22, 54D55, 46A50}
\keywords{Espaces de Berkovich, g\'eom\'etrie analytique $p$-adique, espaces de Fr\'echet-Urysohn, espaces s\'equentiels, espaces ang\'eliques}

\begin{abstract}
\selectlanguage{french}
Bien que les espaces de Berkovich d\'efinis sur un corps trop gros ne soient, en g\'en\'eral, pas m\'etrisables, nous montrons que leur topologie reste en grande partie gouvern\'ee par les suites : tout point adh\'erent \`a une partie est limite d'une suite de points de cette partie et les parties compactes sont s\'equentiellement compactes. Notre preuve utilise de fa\c{c}on essentielle l'extension des scalaires et nous en \'etudions certaines propri\'et\'es. Nous montrons qu'un point d'un disque peut \^etre d\'efini sur un sous-corps de type d\'enombrable et que, lorsque le corps de base est alg\'ebriquement clos, tout point est universel : dans une extension des scalaires, il se rel\`eve canoniquement.

\vskip 0.5\baselineskip

\selectlanguage{english}
\noindent{\bf Abstract}
\vskip 0.5\baselineskip
\noindent
{\bf Berkovich spaces are angelic.} Although Berkovich spaces may fail to be metrizable when defined over too big a field, we prove that a large part of their topology can be recovered through sequences: for instance, limit points of subsets are actual limits of sequences and compact subsets are sequentially compact. Our proof uses extension of scalars in an essential way and we need to investigate some of its properties. We show that a point in a disc may be defined over a subfield of countable type and that, over algebraically closed fields, every point is universal: in an extension of scalars, it may be canonically lifted.
\end{abstract}

\maketitle

\newpage

\section{Introduction}\label{section:intro}

Parmi les diff\'erentes th\'eories d'espaces analytiques $p$-adiques, celle introduite par V.~Berkovich se distingue entre autres par ses propri\'et\'es topologiques agr\'eables. Mentionnons, par exemple, qu'en d\'epit du caract\`ere totalement discontinu du corps de base, les espaces de Berkovich sont localement compacts, localement connexes par arcs (\emph{cf.}~\cite{rouge}) et m\^eme localement contractiles dans de nombreux cas (\emph{cf.}~\cite{smoothI}, \cite{HL}). 

Pour des applications \`a la th\'eorie des syst\`emes dynamiques, ces propri\'et\'es pr\'esentent un grand int\'er\^et et ont d\'ej\`a rendu de nombreux services (\emph{cf.}~\cite{BR} pour un expos\'e d\'etaill\'e dans le cadre de la droite). Cependant, d'autres obstacles se pr\'esentent : dans ce contexte, les suites jouent un r\^ole pr\'epond\'erant, mais leur comportement ne pr\'esente {\it a priori} gu\`ere de liens avec la topologie des espaces de Berkovich. En effet, lorsque leur corps de d\'efinition est trop gros, ces espaces cessent en g\'en\'eral d'\^etre m\'etrisables et rien n'assure alors que les caract\'erisations usuelles des propri\'et\'es topologiques en termes de suites continuent de s'appliquer. Pourtant, nous allons montrer que, dans une large mesure, tel est bien le cas, allongeant ainsi la liste des propri\'et\'es topologiques remarquables des espaces de Berkovich.

Dans ce texte, nous allons pr\'ecis\'ement montrer que les espaces de Berkovich sont des espaces de Fr\'echet-Urysohn. Cette condition, qui signifie que tout point adh\'erent \`a une partie est limite d'une suite de points de cette partie, entra\^{\i}ne notamment que la notion de partie ouverte ou ferm\'ee peut se tester \`a l'aide de suites et que toute partie compacte (au sens o\`u tout recouvrement ouvert poss\`ede un sous-recouvrement fini) est \'egalement s\'equentiellement compacte (au sens o\`u toute suite poss\`ede une sous-suite convergente). Nous d\'emontrerons \'egalement que les espaces de Berkovich sont ang\'eliques, c'est-\`a-dire qu'ils satisfont la condition suppl\'ementaire que leurs parties relativement $\omega$-compactes sont relativement compactes, sous certaines conditions, toujours v\'erifi\'ees pour les courbes ou les espaces provenant de vari\'et\'es alg\'ebriques.

Signalons que ces r\'esultats ont \'et\'e obtenus par C.~Favre dans~\cite{Countability} lorsque le corps de base est un corps de s\'eries de Laurent. Nous nous affranchissons ici de cette hypoth\`ese. Indiquons que la strat\'egie qu'il emploie fait intervenir des espaces de Riemann-Zariski (ce qui explique la restriction aux corps de s\'eries de Laurent) et se distingue assez nettement de la n\^otre.\\

Ajoutons, \`a pr\'esent, quelques mots sur les r\'esultats topologiques. Dans des cas simples comme celui du disque de dimension~$1$, voire celui des courbes, l'on se convainc assez facilement de leur v\'eracit\'e. Consid\'erons par exemple le disque unit\'e~$\D^{1,\mathrm{an}}_{k}$ sur un corps valu\'e complet alg\'ebriquement clos~$k$ de valuation non triviale. Les $k$-points y sont denses et l'on construit ais\'ement une suite de $k$-points convergeant vers un point donn\'e {\it a priori}. Si ce point est le point de Gau{\ss}, par exemple, il suffit que les points de la suite parcourent une infinit\'e de branches issues de ce point (autrement dit que l'ensemble des classes r\'esiduelles dans~$\ti{k}$ des points de la suite soit infini). Un raisonnement du m\^eme style montrerait que~$\D^{1,\mathrm{an}}_{k}$ est s\'equentiellement compact.

Cependant, la port\'ee de ce type de techniques semble assez limit\'ee et nous proc\'ederons par d'autres m\'ethodes. Les espaces de Berkovich \'etant construits \`a partir d'espaces affino\"{\i}des, qui sont eux-m\^emes des ferm\'es de Zariski de disques, il suffit en r\'ealit\'e de d\'emontrer les r\'esultats pour les disques. Exposons en quelques mots la strat\'egie que nous adopterons. \'Etant donn\'e un disque~$\D^{n,\mathrm{an}}_{k}$ sur un corps valu\'e complet~$k$, nous commencerons par descendre le probl\`eme sur un sous-corps~$\ell$ de~$k$. Si ce corps est assez petit, le disque~$\D^{n,\mathrm{an}}_{\ell}$ est m\'etrisable et le probl\`eme est r\'esolu. Il faut alors remonter.

Nous consacrons la section~\ref{section:universel} aux points \og que l'on peut remonter \fg. Ces points, que nous appelons universels, ont \'et\'e introduits par V.~Berkovich sous le nom de \og peaked points \fg. Sans rentrer dans les d\'etails, un point~$x$ d'un espace analytique~$X$ sur~$k$ est dit universel lorsque, pour toute extension valu\'ee compl\`ete~$K$ de~$k$, il existe un point canonique au-dessus de~$x$ dans~$X_{K}$. Examinons, de nouveau, le cas o\`u~$X$ est un disque de dimension~$1$ sur un corps alg\'ebriquement clos. On peut alors \'ecrire tout point comme bord de Shilov (qui co\"{\i}ncide ici avec le bord topologique) d'un disque centr\'e en un point rationnel de rayon plus petit ou comme limite de tels bords. Nous obtenons ainsi un proc\'ed\'e pour remonter canoniquement tous les points de~$X$ \`a~$X_{K}$. C'est l'approche adopt\'ee par X.~Faber dans~\cite{Berkovichramification}, \S 4.

De fa\c{c}on plus g\'en\'erale, V.~Berkovich a propos\'e un moyen de v\'erifier la propri\'et\'e d'universalit\'e consistant \`a r\'ealiser le point comme unique point du bord de Shilov d'un espace strictement affino\"{\i}de d'un certain type. Nous aurons besoin de g\'en\'eraliser ce r\'esultat \`a des espaces qui ne sont pas strictement affino\"{\i}des, ce qui oblige \`a remplacer les r\'eductions classiques par des r\'eductions gradu\'ees au sens de M.~Temkin (\emph{cf.}~\cite{TemkinII}). C'est pour cette raison que nous avons r\'edig\'e la section~\ref{section:alggrad}, o\`u nous \'etendons au cadre gradu\'e quelques r\'esultats classiques d'alg\`ebre commutative tels le Nullstellensatz. Nous en d\'eduisons le r\'esultat de \og remont\'ee \fg\ que nous recherchions : sur un corps alg\'ebriquement clos, tout point est universel.

\`A la section~\ref{section:universeldisque}, nous nous int\'eressons sp\'ecifiquement au cas des disques. Ainsi que nous l'avons expliqu\'e pr\'ec\'edemment, nous montrons que nous pouvons en \og descendre \fg\ les points : tout point du disque~$\D^{n,\mathrm{an}}_{k}$ peut \^etre d\'efini (en un sens que nous pr\'ecisons) sur un disque~$\D^{n,\mathrm{an}}_{\ell}$, o\`u~$\ell$ est un sous-corps de~$k$ de type d\'enombrable sur son sous-corps premier. Dans ce cas, le disque~$\D^{n,\mathrm{an}}_{\ell}$ est m\'etrisable. 

Nous disposons alors de tous les ingr\'edients pour mettre en place la strat\'egie expos\'ee plus haut et en tirer quelques cons\'equences ayant trait \`a la topologie des espaces de Berkovich. C'est l'objet de la section~\ref{section:topo}.\\

\textbf{Remerciements} 

Nous remercions tr\`es chaleureusement Charles Favre qui nous a pos\'e les questions \`a l'origine de ce texte et nous a communiqu\'e sa pr\'epublication~\cite{Countability} sur le m\^eme sujet. Ses remarques sur le texte nous ont apport\'e une aide certaine et nous ont amen\'e \`a \'eclaircir plusieurs points. Merci \'egalement \`a Antoine Chambert-Loir et Antoine Ducros pour leurs commentaires et conseils.\\

\textbf{Notations}

Ce texte est consacr\'e \`a l'\'etude d'espaces analytiques au sens de V.~Berkovich. Nous adopterons les notations suivantes dans tout le texte, exception faite de la section~\ref{section:alggrad}. La lettre~$k$ d\'esignera un corps muni d'une valeur absolue ultram\'etrique pour laquelle il est complet. Nous n'excluons pas le cas de la valeur absolue triviale. Nous noterons~$k_{p}$ le compl\'et\'e du sous-corps premier de~$k$. 

Si~$X$ est un espace $k$-analytique et~$K$ une extension valu\'ee compl\`ete de~$k$, nous noterons~$X_{K}$ l'espace $K$-analytique obtenu par extension des scalaires.

Nous dirons qu'une famille finie~$\br=(r_{1},\dots,r_{n})$ de nombres r\'eels strictement positifs est un polyrayon $k$-libre si son image dans le~$\Q$-espace vectoriel~$\R_{+}^*/\sqrt{|k^*|}$ est une famille libre. Nous noterons~$k_{\br}$ le corps constitu\'e des s\'eries de la forme 
\[f = \sum_{\bm\in\Z^n} \alpha_{\bm}\, T_{1}^{m_{1}}\dots T_{n}^{m_{n}},\] 
avec $\alpha_{\bm} \in k$, telles que la famille $(|\alpha_{\bm}| r_{1}^{m_{1}}\dots r_{n}^{m_{n}})_{\bm\in\Z^n}$ est sommable. Muni de la norme d\'efinie par $\|f\|_{\br} = \max_{\bm\in\Z^n}(|\alpha_{\bm}| r_{1}^{m_{1}}\dots r_{n}^{m_{n}})$, c'est un corps valu\'e complet. 

Soit~$(A,\|.\|)$ une $k$-alg\`ebre de Banach. L'alg\`ebre~$A\ho_{k} k_{\br}$ (\emph{cf.}~section~\ref{section:universel} pour des rappels sur le produit tensoriel compl\'et\'e) est alors isomorphe \`a l'espace des s\'eries de la forme $\sum_{\bm\in\Z^n} a_{\bm}\, T_{1}^{m_{1}}\dots T_{n}^{m_{n}}$, avec $a_{\bm}\in A$, telles que la famille $(\|a_{\bm}\| r_{1}^{m_{1}}\dots r_{n}^{m_{n}})_{\bm\in\Z^n}$ est sommable. Muni de la norme d\'efinie par $\|f\|_{\br} = \max_{\bm\in\Z^n}(\|a_{\bm}\| r_{1}^{m_{1}}\dots r_{n}^{m_{n}})$, c'est un espace de Banach.

Soient~$\As$ une alg\`ebre $k$-affino\"{\i}de. Tout point~$x$ du spectre analytique \mbox{$X=\Mc(\As)$} de~$\As$ est associ\'e \`a une semi-norme multiplicative born\'ee sur~$\As$, que nous noterons~$|.|_{x}$. Nous noterons~$\p_{x}$ l'id\'eal premier de~$\As$ d\'efini par 
\[\p_{x} = \{f\in\As\, |\, |f|_{x} = 0\},\]
$k(\p_{x})$ le corps des fractions de l'anneau int\`egre~$\As/\p_{x}$ et~$\Hs(x)$ son compl\'et\'e pour la valeur absolue induite par~$|.|_{x}$.

Suivant~\cite{rouge}, \S~9.1, si~$K$ est une extension valu\'ee compl\`ete de~$k$, nous noterons~$s(K/k)$ le degr\'e de transcendance du corps~$\ti{K}$ sur~$\ti{k}$ et~$t(K/k)$ la dimension du $\Q$-espace vectoriel~$\sqrt{|K^*|}/\sqrt{|k^*|}$. Pour un point~$x$ d'un espace $k$-affino\"{\i}de~$X$, nous noterons $s_{k}(x)=s(\Hs(x)/k)$ et $t_{k}(x)=t(\Hs(x)/k)$ (ou simplement~$s(x)$ et~$t(x)$ si aucune confusion ne peut en r\'esulter). Dans ce contexte, l'in\'egalit\'e d'Abhyankar s'\'ecrit
\[s_{k}(x)+t_{k}(x) \le \dim_{k,x}(X).\]
Pour des rappels sur la dimension des espaces analytiques, nous renvoyons \`a~\cite{variationdimension}, \S~1.

\section{Quelques r\'esultats d'alg\`ebre gradu\'ee}\label{section:alggrad}

Dans cette section, nous d\'emontrons quelques r\'esultats d'alg\`ebre gradu\'ee, au sens de M.~Temkin. Fixons un groupe commutatif~$G$. Les anneaux gradu\'es que nous consid\'erons sont des anneaux commutatifs et unitaires~$A$, munis de $G$-graduations $A = \bigoplus_{g\in G} A_{g}$ par des sous-groupes additifs v\'erifiant $A_{g}A_{h} \subset A_{gh}$. Nous demandons que les morphismes respectent la graduation. Les notions d'alg\`ebre commutative habituelles s'\'etendent \`a ce cadre. Nous renvoyons \`a~\cite{TemkinII}, \S 1 pour les d\'etails mais rappelons tout de m\^eme quelques d\'efinitions pour la commodit\'e du lecteur.

\begin{itemize}
\item Un \'el\'ement homog\`ene~$x$ de~$A$ est un \'el\'ement appartenant \`a l'un des~$A_{g}$. On note appelle~$g$ l'ordre de~$x$ et on le note~$\rho(x)$.
\item Un anneau gradu\'e est dit int\`egre si le produit de deux \'el\'ements homog\`enes non nuls est non nul.
\item Un corps gradu\'e est un anneau gradu\'e non nul dans lequel tous les \'el\'ements homog\`enes sont inversibles. Tout anneau gradu\'e int\`egre poss\`ede un corps des fractions gradu\'e obtenu en inversant les \'el\'ements homog\`enes.
\item Un id\'eal homog\`ene~$I$ de~$A$ est un id\'eal engendr\'e par des \'el\'ements homog\`enes. Le quotient~$A/I$ est alors encore naturellement un anneau gradu\'e.
\item Si~$g$ est un \'el\'ement de~$G$, l'anneau $B = A[g^{-1}T]$ est l'anneau~$A[T]$ muni de l'unique graduation telle que~$T$ soit homog\`ene d'ordre~$g$.
\item Une $A$-alg\`ebre gradu\'ee~$B$ est de type fini si elle est quotient d'une alg\`ebre gradu\'ee~$A[\bg^{-1}\bT]$ par un id\'eal homog\`ene.
\item Un $A$-module~$B$ est de type fini s'il est quotient d'un module gradu\'e~$A^n$ (avec la graduation donn\'ee par $(A^n)_{g} = A_{g}^n$) par un sous-module homog\`ene.
\end{itemize}

Signalons que certaines de ces notions peuvent se comporter de fa\c{c}on surprenante. Consi\-d\'erons, par exemple, un corps gradu\'e~$k$, un \'el\'ement~$g$ de~$G$ n'appartenant pas \`a~$\rho(k)$ et consid\'erons l'anneau gradu\'e~$k[g^{-1}T]$. Ses seuls \'el\'ements homog\`enes sont de la forme $aT^n$, o\`u~$a$ est un \'el\'ement homog\`ene de~$k$ et~$n$ un entier. Ce sont donc les seuls \'el\'ements que l'on inverse pour obtenir le corps des fractions gradu\'e~$k(g^{-1}T)$. Cet argument montre que le corps gradu\'e~$k(g^{-1}T)$ est de type fini sur~$k$. En particulier, nous ne pouvons esp\'erer dans ce cadre un Nullstellensatz compl\`etement analogue au Nullstellensatz classique.\\

Ce formalisme des anneaux gradu\'es nous sera utile par la suite pour r\'eduire des alg\`ebres $k$-affino\"{\i}des qui ne sont pas n\'ecessairement $k$-affino\"{\i}des. En g\'en\'eral, si~$\Ds$ d\'esigne une $k$-alg\`ebre de Banach et $\rho : \Ds \to \R_{+}$ sa semi-norme spectrale, pour tout nombre r\'eel $r>0$, on d\'efinit~$\ti{\Ds}_{r}$ comme le quotient de l'anneau $\{x\in \Ds\, |\, \rho(x)\le r\}$ par l'id\'eal $\{x\in \Ds\, |\, \rho(x) < r\}$. La r\'eduction de~$\Ds$ est alors l'alg\`ebre $\R_{+}^*$-gradu\'ee
\[\ti{\Ds} = \bigoplus_{r>0} \ti{\Ds}_{r}.\]
Cette r\'eduction appliqu\'ee \`a des alg\`ebres $k$-affino\"{\i}des quelconques poss\`ede des propri\'et\'es similaires \`a la r\'eduction classique des alg\`ebres strictement $k$-affino\"{\i}des. Pour des r\'esultats pr\'ecis, nous renvoyons \`a~\cite{TemkinII}, \S 3.\\

Notre but est ici de d\'emontrer une version du Nullstellensatz pour les alg\`ebres gradu\'ees. Nous suivons la preuve qu'en ont propos\'ee E.~Artin et J.~Tate dans~\cite{ArtinTate}. Nous en profitons pour donner quelques analogues de d\'efinitions et r\'esultats classiques. Commen\c{c}ons par \'enoncer deux r\'esultats concernant les alg\`ebres gradu\'ees de polyn\^omes en une variable. Nous en omettons les d\'emonstrations, en tout point analogues aux d\'emonstrations classiques.

\begin{prop}[Division euclidienne]
Soient~$k$ un corps gradu\'e et~$g$ un \'el\'ement de~$G$. Soient~$A$ et~$B$ deux \'el\'ements homog\`enes de l'anneau gradu\'e~$k[g^{-1}T]$. Supposons que~$B$ n'est pas nul. Alors il existe un unique couple~$(Q,R)$ d'\'el\'ements homog\`enes de~$k[g^{-1}T]$ qui v\'erifient les propri\'et\'es suivantes :
\begin{enumerate}[\it i)]
\item $A = BQ +R$\ ;
\item $\deg(R) < \deg(B)$.
\end{enumerate}
\end{prop}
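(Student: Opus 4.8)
L'id\'ee est de transcrire la preuve classique de la division euclidienne en v\'erifiant \`a chaque \'etape que la graduation est respect\'ee. \'Ecrivons le groupe~$G$ multiplicativement, d'\'el\'ement neutre~$1$. Un \'el\'ement homog\`ene non nul d'ordre~$h$ de~$k[g^{-1}T]$ s'\'ecrit $A = \sum_{i} a_{i}\, T^{i}$, o\`u~$a_{i}$ est un \'el\'ement homog\`ene de~$k$ d'ordre $h g^{-i}$ (de sorte que $a_{i} T^{i}$ soit d'ordre~$h$) ; notons $\deg(A)$ le plus grand indice~$i$ tel que $a_{i}\neq 0$, avec la convention $\deg(0) = -\infty$. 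L'ensemble des \'el\'ements homog\`enes d'un ordre fix\'e, compl\'et\'e par~$0$, est un sous-groupe additif de~$k[g^{-1}T]$. Notons enfin que~$k[g^{-1}T]$ est un anneau gradu\'e int\`egre : dans un corps gradu\'e, tout \'el\'ement homog\`ene non nul est inversible, d'inverse lui aussi homog\`ene, et ne peut donc \^etre diviseur de z\'ero parmi les \'el\'ements homog\`enes ; il en va de m\^eme dans~$k[g^{-1}T]$, le produit des termes dominants de deux polyn\^omes homog\`enes non nuls \'etant un mon\^ome non nul.

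L'existence s'obtient par r\'ecurrence sur~$\deg(A)$. Posons $h = \rho(A)$, $h' = \rho(B)$ et $m = \deg(B)$, et notons~$b_{m}$ le coefficient dominant de~$B$. Si $\deg(A) < m$, on prend $Q = 0$ et $R = A$. Sinon, soit $n = \deg(A)\ge m$ ; notons~$a_{n}$ le coefficient dominant de~$A$ et posons $c = a_{n}\, b_{m}^{-1}$, ce qui a un sens car~$b_{m}$ est un \'el\'ement homog\`ene non nul du corps gradu\'e~$k$. Le point cl\'e est que~$c$ est homog\`ene d'ordre $(h g^{-n})(h' g^{-m})^{-1} = h\,(h')^{-1}\, g^{m-n}$, de sorte que $c\, T^{n-m} B$ est homog\`ene du m\^eme ordre~$h$ que~$A$ ; par construction, son terme dominant annule celui de~$A$, donc $A - c\, T^{n-m} B$ est homog\`ene d'ordre~$h$ et de degr\'e strictement inf\'erieur \`a~$n$. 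Il suffit alors d'appliquer l'hypoth\`ese de r\'ecurrence \`a cet \'el\'ement, puis d'ajouter $c\, T^{n-m}$ au quotient obtenu, qui reste homog\`ene puisque $c\, T^{n-m}$ est d'ordre $h\,(h')^{-1}$.

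Pour l'unicit\'e, on v\'erifie d'abord que, dans toute solution,~$Q$ est nul ou homog\`ene d'ordre $h\,(h')^{-1}$ et~$R$ est nul ou homog\`ene d'ordre~$h$ : dans l'\'egalit\'e $A = BQ + R$, le membre de gauche n'a qu'une composante homog\`ene non nulle, tandis que~$BQ$ et~$R$, lorsqu'ils sont non nuls, ont des ordres bien d\'efinis ; ils doivent donc avoir le m\^eme ordre~$h$, d'o\`u $\rho(Q) = h\,(h')^{-1}$ gr\^ace au caract\`ere int\`egre de~$k[g^{-1}T]$, sauf si l'un d'eux s'annule --- tous les cas \'etant imm\'ediats. \'Etant donn\'e deux solutions $(Q_{1}, R_{1})$ et $(Q_{2}, R_{2})$, les diff\'erences $Q_{1} - Q_{2}$ et $R_{1} - R_{2}$ sont donc homog\`enes ou nulles et v\'erifient $B(Q_{1} - Q_{2}) = R_{2} - R_{1}$. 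Si $Q_{1} \ne Q_{2}$, le membre de gauche est un \'el\'ement homog\`ene non nul de degr\'e $\deg(B) + \deg(Q_{1} - Q_{2}) \ge m$, alors que le membre de droite est de degr\'e strictement inf\'erieur \`a~$m$ ; cette contradiction force $Q_{1} = Q_{2}$, puis $R_{1} = A - BQ_{1} = R_{2}$.

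Il n'y a pas ici de v\'eritable obstacle : l'argument est celui du cas classique, le seul point d\'elicat \'etant de suivre avec soin les ordres des diff\'erents \'el\'ements en jeu --- v\'erifier que le coefficient $a_{n}\, b_{m}^{-1}$ par lequel on divise est encore homog\`ene de l'ordre exactement voulu, ce qui repose de fa\c{c}on essentielle sur l'hypoth\`ese de corps gradu\'e, et, pour l'unicit\'e, utiliser que~$k[g^{-1}T]$ est un anneau gradu\'e int\`egre, de sorte que les degr\'es s'ajoutent par multiplication. On notera enfin que, contrairement au cas non gradu\'e, les exposants susceptibles d'appara\^itre dans un polyn\^ome homog\`ene d'ordre donn\'e sont contraints par la condition $k_{h g^{-i}} \ne 0$ ; mais l'algorithme de division respecte automatiquement ces contraintes, pr\'ecis\'ement gr\^ace au calcul d'ordre ci-dessus.
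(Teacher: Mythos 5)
Votre preuve est correcte : c'est exactement l'argument classique de division euclidienne (récurrence sur le degré pour l'existence, additivité des degrés et intégrité graduée pour l'unicité), avec le suivi des ordres rendu explicite, c'est-à-dire précisément la démonstration que le papier omet en la déclarant \og en tout point analogue \fg\ au cas classique. Le point clé que vous vérifiez --- l'ordre de $a_{n}b_{m}^{-1}T^{n-m}$ et l'homogénéité de $A - a_{n}b_{m}^{-1}T^{n-m}B$ --- est bien le seul endroit où la graduation intervient, et votre calcul est juste.
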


\begin{cor}\label{cor:principal}
Soit~$k$ un corps gradu\'e. Pour tout \'el\'ement~$g$ de~$G$, l'anneau gradu\'e $k[g^{-1}T]$ est principal : tout id\'eal homog\`ene est engendr\'e par un \'el\'ement homog\`ene.
\end{cor}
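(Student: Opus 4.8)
Le plan est d'adapter presque mot pour mot la d\'emonstration classique du fait que l'anneau des polyn\^omes en une variable sur un corps est principal, en la restreignant aux \'el\'ements et id\'eaux homog\`enes. Soit~$I$ un id\'eal homog\`ene de~$k[g^{-1}T]$. S'il est nul, il est engendr\'e par l'\'el\'ement homog\`ene~$0$ et il n'y a rien \`a d\'emontrer ; je supposerai donc~$I$ non nul. Puisque~$I$ est engendr\'e par des \'el\'ements homog\`enes, il contient un \'el\'ement homog\`ene non nul, et je choisirais un tel \'el\'ement~$B$ dont le degr\'e en~$T$ est le plus petit possible, ce qui est licite car l'ensemble de ces degr\'es est une partie non vide de l'ensemble des entiers positifs ou nuls.

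Je montrerais ensuite que~$I = (B)$. L'inclusion~$(B)\subset I$ \'etant imm\'ediate, il reste \`a prouver l'inclusion r\'eciproque. Comme~$I$ est homog\`ene, tout \'el\'ement de~$I$ est somme d'\'el\'ements homog\`enes appartenant eux-m\^emes \`a~$I$ ; il suffit donc de v\'erifier que tout \'el\'ement homog\`ene~$A$ de~$I$ appartient \`a~$(B)$. Appliquant la division euclidienne (proposition pr\'ec\'edente) aux \'el\'ements homog\`enes~$A$ et~$B$, j'obtiendrais un couple~$(Q,R)$ d'\'el\'ements homog\`enes de~$k[g^{-1}T]$ tel que $A = BQ + R$ avec $\deg(R) < \deg(B)$. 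L'\'el\'ement~$R = A - BQ$ est alors un \'el\'ement homog\`ene de~$I$ ; s'il \'etait non nul, son degr\'e contredirait le choix de~$B$. On a donc~$R = 0$, d'o\`u~$A = BQ \in (B)$, et finalement~$I = (B)$.

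Cette preuve ne recèle aucune difficult\'e s\'erieuse : le seul point qui demande un peu d'attention, propre au cadre gradu\'e, est de s'assurer qu'il suffit de tester l'appartenance \`a~$(B)$ sur les \'el\'ements homog\`enes de~$I$ (ce qui d\'ecoule de ce qu'un id\'eal homog\`ene est la somme directe de ses intersections avec les composantes homog\`enes de l'anneau) et que la division euclidienne fournit bien un reste \emph{homog\`ene} de degr\'e strictement inf\'erieur, seule propri\'et\'e requise dans la r\'ecurrence sur le degr\'e. On pourrait d'ailleurs remarquer, de fa\c{c}on plus terre-\`a-terre, que les \'el\'ements homog\`enes de~$k[g^{-1}T]$ sont exactement les mon\^omes~$aT^n$ avec~$a$ homog\`ene dans~$k$, donc inversible d\`es qu'il est non nul, ce qui rend la division euclidienne transparente ; mais l'argument ci-dessus a l'avantage de n'invoquer que l'\'enonc\'e pr\'ec\'edent.
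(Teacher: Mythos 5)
Votre argument principal est correct et c'est exactement la d\'emonstration classique que le papier a en vue : l'article omet d'ailleurs la preuve de ce corollaire en la d\'eclarant \og en tout point analogue \fg\ au cas classique, et votre r\'edaction (choix d'un \'el\'ement homog\`ene~$B$ de degr\'e minimal, division euclidienne, reste homog\`ene de degr\'e strictement plus petit donc nul, r\'eduction aux \'el\'ements homog\`enes via l'homog\'en\'eit\'e de l'id\'eal) en est la transcription fid\`ele. Une seule r\'eserve, sur votre aparte final : il est faux en g\'en\'eral que les \'el\'ements homog\`enes de~$k[g^{-1}T]$ soient les mon\^omes~$aT^n$ --- d\`es qu'une puissance de~$g$ appartient \`a~$\rho(k)$, il existe des \'el\'ements homog\`enes \`a plusieurs termes (par exemple~$T-a$ avec~$\rho(a)=g$ lorsque~$g\in\rho(k)$), non inversibles, de sorte que la division euclidienne n'est pas \og transparente \fg\ ; mais comme votre preuve principale ne s'appuie que sur la proposition de division euclidienne, elle reste valable telle quelle.
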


Comme dans le cas classique, on peut d\'efinir sur les corps gradu\'es une notion d'\'el\'ement alg\'ebrique.

\begin{defi}
Soit~$\bg$ un \'el\'ement de~$G^n$. Un polyn\^ome~$P(\bT)$ en~$n$ variables \`a coefficients dans une alg\`ebre gradu\'ee~$A$ est dit {\bf $\bg$-homog\`ene} s'il d\'efinit un \'el\'ement homog\`ene de~$k[\bg^{-1}\bT]$. Un polyn\^ome est dit {\bf $G$-homog\`ene}, ou simplement {\bf homog\`ene}, s'il existe un \'el\'ement~$\bg$ de~$G^n$ pour lequel il est $\bg$-homog\`ene.
\end{defi}

\begin{defi}
Soit~$k$ un corps gradu\'e. Un \'el\'ement homog\`ene~$x$ d'une $k$-alg\`ebre gradu\'ee est dit {\bf alg\'ebrique} sur~$k$ s'il est racine d'un polyn\^ome $G$-homog\`ene en une variable \`a coefficients dans~$k$.

Le corps gradu\'e~$k$ est dit {\bf alg\'ebriquement clos} si tout polyn\^ome $G$-homog\`ene non constant en une variable \`a coefficients dans~$k$ y poss\`ede une racine.
\end{defi}

\begin{rem}
Le corollaire~\ref{cor:principal} permet notamment de d\'efinir le polyn\^ome minimal d'un \'el\'ement alg\'ebrique sur~$k$. C'est un polyn\^ome $G$-homog\`ene.
\end{rem}

On v\'erifie facilement qu'un corps gradu\'e est alg\'ebriquement clos si, et seulement si, toutes ses extensions finies sont triviales. Comme dans le cas classique, les r\'eductions des corps complets alg\'ebriquement clos sont alg\'ebriquement closes, ainsi que l'exprime la proposition suivante. 

\begin{prop}
Soit~$k$ un corps valu\'e complet alg\'ebriquement clos. Alors sa r\'eduction~$\ti{k}$ est un corps gradu\'e alg\'ebriquement clos.
\end{prop}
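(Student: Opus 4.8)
The plan is to show that every nonconstant $G$-homogeneous polynomial $P$ in one variable $T$ with coefficients in $\ti k$ has a root in $\ti k$. The key point is that $P$ being $\bg$-homogeneous for some $\bg\in G$ means it is homogeneous in the graded ring $\ti k[g^{-1}T]$, so all its monomials $\ti a_i T^i$ have the same order, say $\delta\in\R_+^*$; writing $g$ for the order of $T$, this forces $\rho(\ti a_i)\,g^i=\delta$ for every $i$ with $\ti a_i\neq 0$. I would first lift the situation to $k$: each nonzero homogeneous coefficient $\ti a_i\in\ti k_{\delta g^{-i}}$ is the residue of some $a_i\in k$ with $|a_i|=\delta g^{-i}$ (here one must make sense of $g\in\R_+^*$ as an actual positive real, which is legitimate since $G=\R_+^*$ in the reduction $\ti k=\bigoplus_{r>0}\ti k_r$; so $g$ is a genuine positive real and $\delta g^{-i}$ too). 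For the zero coefficients just take $a_i=0$. This produces a genuine polynomial $Q(T)=\sum_i a_i T^i\in k[T]$ whose Newton polygon is a single segment of slope $-\log g$ (i.e. all slopes equal), precisely because all the $|a_i|$ with $a_i\neq 0$ lie on the line $v(a_i)=\text{const}-i\log g$.

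Since $k$ is algebraically closed, $Q$ splits as $Q(T)=c\prod_{j}(T-\alpha_j)$ with $\alpha_j\in k$. Because the Newton polygon of $Q$ is a single segment of slope corresponding to $|{\cdot}|=g$, every root satisfies $|\alpha_j|=g$. Pick any root $\alpha=\alpha_j$; it is nonzero (as $g>0$) so it has a well-defined nonzero residue $\ti\alpha\in\ti k_g$, a homogeneous element of order $g$. The claim is then that $\ti\alpha$ is a root of $P$. Indeed, $P(\ti\alpha)$ is the image of $Q(\alpha)=0$ under reduction, but one has to be slightly careful: reduction is not a ring homomorphism in the naive sense, it only respects sums and products of homogeneous elements whose orders match up, and here all the terms $a_i\alpha^i$ have the same order $\delta$, so they add coherently in the graded reduction. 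Concretely: $\widetilde{a_i\alpha^i}=\ti a_i\ti\alpha^i$ when $a_i\neq0$ (product of homogeneous elements reduces to the product of reductions, provided there is no cancellation of norm, which holds since $|a_i\alpha^i|=\delta$), and the sum $\sum_i a_i\alpha^i=0$ has all summands of the same norm $\delta$ — so either the leading-order parts cancel, giving $\sum_i \ti a_i\ti\alpha^i=P(\ti\alpha)=0$ in $\ti k_\delta$, or the actual sum would have norm $\delta\neq 0$, contradicting $Q(\alpha)=0$. Hence $P(\ti\alpha)=0$.

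The main obstacle, I expect, is this last bookkeeping with graded reduction: one must control the interaction between the (non-additive, norm-sensitive) reduction map $k\to\ti k$ and the algebra of $Q$, making sure that "all terms have the same order $\delta$" is genuinely forced and that therefore $Q(\alpha)=0$ does project to $P(\ti\alpha)=0$ rather than to a trivial $0=0$ coming from norm cancellation. The homogeneity of $P$ is exactly what makes this work — it guarantees the monomials of $Q$ are equinormed. A secondary point to handle cleanly is the case of zero coefficients of $P$ (they lift to $0$ and cause no trouble) and checking $P$ remains nonconstant after lifting so that $Q$ has positive degree and thus a root. An alternative, perhaps cleaner, packaging is to invoke the equivalence stated just before the proposition — a graded field is algebraically closed iff all its finite graded extensions are trivial — and argue that any finite graded extension of $\ti k$ would, by a lifting argument of the same flavor, descend to a nontrivial finite extension of $k$, contradicting that $k$ is algebraically closed; but the Newton-polygon argument above is more self-contained and I would present that.
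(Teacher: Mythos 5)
Your proposal is correct and follows essentially the same route as the paper: lift the homogeneous polynomial to $k[T]$, use that $k$ is algebraically closed to get a root, deduce from the equal norms $|a_i|r^i$ of the coefficients that the root has absolute value $r$ (your Newton-polygon segment is exactly the paper's argument that the maximum $|a_ix^i|$ is attained twice), and then reduce, noting that no norm cancellation can occur since otherwise the sum would have norm $\delta\neq 0$. The only point to tidy is the degenerate case where the constant coefficient vanishes: then $0$ is a root of your lift $Q$ and the claim that every root has absolute value $g$ fails, but $0$ is then already a root of $\tilde{P}$ (the paper simply reduces to the case $\alpha_0\neq 0$).
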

\begin{proof}
Ici la graduation est celle qui correspond \`a la valeur absolue et le groupe~$G$ est n'est autre que le groupe multiplicatif~$\R_{+}^*$.

Soient $r>0$ et $\ti{P}(T) = \sum_{n=0}^d \alpha_{n} T^n$ un \'el\'ement homog\`ene non constant de~$\ti{k}[r^{-1}T]$. Notons~$s$ son ordre. Nous pouvons supposer que~$\alpha_{0} \ne 0$. Relevons~$\ti{P}(T)$ en un \'el\'ement $P(T) = \sum_{n=0}^d a_{n} T^n$ de~$k[T]$. Puisque~$\ti{P}(T)$ est homog\`ene d'ordre~$s$, pour tout~$n$, nous avons $|a_{n}| r^n =s$.

Le polyn\^ome~$P(T)$ n'est pas constant et poss\`ede donc une racine~$x\ne 0$ dans~$k$. Il existe donc deux entiers $i,j\in\cn{0}{d}$, avec $i<j$, tels que l'on ait $|a_{i}x^i| = |a_{j}x^j| >0$. On en d\'eduit que $|x|^{j-i} = |a_{i}|/|a_{j}| = r^{j-i}$ et donc que $|x|=r$. Par cons\'equent, pour tout~$n$, $|a_{n}x^n| = s$ et $\ti{a}_{n} \ti{x}^n$ est homog\`ene de degr\'e~$s$. On en d\'eduit que $\sum_{n=0}^d \ti{a}_{n} \ti{x}^n = 0$ dans~$\ti{k}$.
\end{proof}

Nous disposons \'egalement d'une notion de module gradu\'e noeth\'erien et d'anneau noeth\'erien qui se comporte comme dans le cadre classique. De nouveau, nous laissons au lecteur le soin d'effectuer ces v\'erifications.

\begin{thm}[Artin-Tate]
Soient $A \subset B \subset C$ des anneaux gradu\'es. Supposons que~$A$ est noeth\'erien, que~$C$ soit une $A$-alg\`ebre de type fini et un $B$-module de type fini. Alors~$B$ est une $A$-alg\`ebre de type fini.
\end{thm}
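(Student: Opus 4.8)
The plan is to follow the classical Artin--Tate argument verbatim, checking at each step that the graded analogues go through. First I would fix homogeneous generators $c_{1},\dots,c_{n}$ of $C$ as an $A$-algebra and homogeneous generators $d_{1},\dots,d_{m}$ of $C$ as a $B$-module. Since each $c_{i}$ lies in $C$, one may write it as a $B$-linear combination of the $d_{j}$; moreover each product $d_{i}d_{j}$ again lies in $C$, hence is a $B$-linear combination of the $d_{k}$. Collecting all the (finitely many) homogeneous coefficients appearing in these relations, call them $b_{\lambda}$, I would let $B_{0} \subset B$ be the graded $A$-subalgebra of $B$ generated by the $b_{\lambda}$; being a finitely generated $A$-algebra with $A$ graded noetherian, $B_{0}$ is itself graded noetherian.

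Next I would show $C$ is a finitely generated $B_{0}$-module: indeed the $B_{0}$-submodule of $C$ generated by $1=d_{1},d_{2},\dots,d_{m}$ contains all the $c_{i}$ (by the first set of relations) and is stable under multiplication by each $c_{i}$ (by the second set of relations, since the relevant coefficients live in $B_{0}$), hence is a graded $B_{0}$-subalgebra of $C$ containing all algebra generators, hence equals $C$. So $C$ is a finitely generated module over the graded noetherian ring $B_{0}$, and therefore $C$ is a noetherian $B_{0}$-module. Since $B$ sits between $B_{0}$ and $C$ as a $B_{0}$-submodule, $B$ is a finitely generated $B_{0}$-module, and as $B_{0}$ is a finitely generated $A$-algebra, $B$ is a finitely generated $A$-algebra, as desired.

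The only genuine work is to make sure the graded versions of the ingredients are available: that a finitely generated algebra over a graded noetherian ring is graded noetherian (the graded Hilbert basis theorem), and that a submodule of a finitely generated module over a graded noetherian ring is finitely generated. Both are stated in the excerpt to behave as in the classical case (``nous laissons au lecteur le soin d'effectuer ces v\'erifications''), so I would simply invoke them. The one point deserving a word of care is homogeneity bookkeeping: when writing $c_{i}$ or $d_{i}d_{j}$ as a $B$-combination of the $d_{k}$, the coefficients can be taken homogeneous, since each of $c_{i}$, $d_{i}d_{j}$, $d_{k}$ is homogeneous and in a graded module a homogeneous element lying in a submodule generated by homogeneous elements is a combination of those generators with homogeneous coefficients. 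With that observed, $B_{0}$ is genuinely a \emph{graded} subalgebra and the rest is formal.

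I expect no real obstacle: the main (mild) subtlety is purely notational, namely keeping track of degrees so that every intermediate object ($B_{0}$, the $B_{0}$-module structures, the submodule $B \subset C$) is honestly graded, so that the graded Hilbert basis theorem and the graded noetherianity of submodules can be applied. Everything else is a transcription of Artin--Tate.
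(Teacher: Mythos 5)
Your proof is correct and follows essentially the same route as the paper: same homogeneous generators $c_{i}$ and $d_{j}$, same finitely generated graded subalgebra $B_{0}$ built from the homogeneous coefficients of the two families of relations, then graded noetherianity of $B_{0}$ to conclude that $B\subset C$ is a finite $B_{0}$-module and hence a finitely generated $A$-algebra. Your remark on homogeneity bookkeeping (that the coefficients in the relations can be chosen homogeneous) is exactly the point the paper treats implicitly, so nothing is missing.
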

\begin{proof}
Soient~$c_{1},\ldots,c_{n}$ des \'el\'ements homog\`enes de~$C$ qui l'engendrent en tant que $A$-alg\`ebre. Soient~$d_{1},\ldots,d_{m}$ des \'el\'ements homog\`enes de~$C$ qui l'engendrent en tant que $B$-module. Pour tout~$i$, nous pouvons \'ecrire
\[c_{i} = \sum_{j} \gamma_{i,j} d_{j},\]
o\`u les~$\gamma_{i,j}$ sont des \'el\'ements homog\`enes de~$A$ et, pour tous~$i$ et~$j$, nous pouvons \'ecrire
\[d_{i}d_{j} = \sum_{k} \delta_{i,j,k} d_{k},\]
o\`u les~$\delta_{i,j,k}$ sont des \'el\'ements homog\`enes de~$B$. Soit~$B_{0}$ la sous-alg\`ebre gradu\'ee de~$B$ engendr\'ee par les~$\gamma_{i,j}$ et les~$\delta_{i,j,k}$. Elle est de type fini sur~$A$ et donc noeth\'erienne. 

En utilisant le fait que tout \'el\'ement homog\`ene de~$C$ peut s'\'ecrire comme un polyn\^ome homog\`ene en les~$c_{i}$ \`a coefficients dans~$A$, on montre que~$C$ est une $B_{0}$-alg\`ebre gradu\'ee de type fini. On en d\'eduit que~$B$ est une $B_{0}$-alg\`ebre gradu\'ee de type fini et donc une $A$-alg\`ebre gradu\'ee de type fini.
\end{proof}

Nous avons d\'ej\`a remarqu\'e plus haut que certains corps gradu\'es transcendants peuvent \^etre de type fini. Nous apportons ici quelques pr\'ecisions sur ce r\'esultat. Si~$\bg$ d\'esigne un \'el\'em\'ent de~$G^n$, nous noterons~$k(\bg^{-1}\bT)$ le corps des fractions gradu\'e de l'anneau gradu\'e~$k[\bg^{-1}\bT]$.

\begin{defi}
Soit~$E$ une partie de~$G$. Une famille~$\Fs$ d'\'el\'ements de~$G$ est dite {\bf ind\'ependante de~$E$} si son image dans le $\Z$-module $G/ \la E \ra$ est libre.

Si la partie~$E$ est r\'eduite \`a l'\'el\'ement~$1$, nous dirons simplement que la famille est {\bf ind\'ependante}.
\end{defi}

\begin{lem}\label{lem:corpstranscendant}
Soient~$k$ un corps gradu\'e, $n$ un entier et~$\bg$ un \'el\'ement de~$G^n$. Le corps gradu\'e~$k(\bg^{-1}\bT)$ est une $k$-alg\`ebre gradu\'ee de type fini si, et seulement si, la famille~$\bg$ est ind\'ependante de~$\rho(k)$.
\end{lem}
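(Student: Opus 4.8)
The statement is an equivalence, so I would prove the two implications separately, with the interesting direction being that independence of $\rho(k)$ forces $k(\bg^{-1}\bT)$ to be of finite type over $k$, while the converse (finite type $\Rightarrow$ independence) should follow from the phenomenon already noted in the introductory discussion: when $g_i \in \rho(k)$, the variable $T_i$ behaves like a genuine transcendental, so its field of fractions is not finitely generated. Let me sketch both.

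\medskip

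\emph{If $\bg$ is independent of $\rho(k)$, then $k(\bg^{-1}\bT)$ is finitely generated over $k$.} The key observation is that when $g_1,\dots,g_n$ are independent modulo $\rho(k)$, every homogeneous element of $k[\bg^{-1}\bT]$ is automatically a monomial: if $f = \sum_{\bm} a_{\bm} \bT^{\bm}$ is homogeneous of order $h$, then each nonzero term satisfies $\rho(a_{\bm})\, g_1^{m_1}\cdots g_n^{m_n} = h$ in $G$, and by independence of the $g_i$ modulo $\rho(k)$ the exponent vector $\bm$ is uniquely determined; hence $f = a_{\bm}\bT^{\bm}$ for a single $\bm$. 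It follows that the homogeneous elements of $k[\bg^{-1}\bT]$ are exactly the elements $a\bT^{\bm}$ with $a\in k$ homogeneous and $\bm \in \Z^n_{\ge 0}$, so the graded field of fractions $k(\bg^{-1}\bT)$ is obtained by inverting these, i.e. it is generated as a $k$-algebra by $T_1, T_1^{-1}, \dots, T_n, T_n^{-1}$. That is manifestly finitely generated (indeed by $2n$ elements).

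\medskip

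\emph{If $k(\bg^{-1}\bT)$ is finitely generated over $k$, then $\bg$ is independent of $\rho(k)$.} I would argue by contraposition: suppose $\bg$ is \emph{not} independent modulo $\rho(k)$. After a $\mathrm{GL}_n(\Z)$ change of variables (which does not affect whether the graded field of fractions is of finite type) I may assume there is a nontrivial relation of minimal support, and reduce to the situation where some $g_i$ already lies in $\rho(k)$ — concretely, one may assume $n=1$ and $g_1 = \rho(c)$ for a homogeneous $c \in k^*$; replacing $T_1$ by $c^{-1}T_1$ we reduce to $g_1 = 1$, i.e. to the graded field $k(T)$ with $T$ in degree $1$. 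Now the homogeneous elements of $k[T]$ are, writing each homogeneous element of degree $h$ as $\sum_{i} a_i T^i$ with all $\rho(a_i) = h$, exactly the "ordinary" polynomials over the (ungraded) ring $k_1$ of degree-$1$ elements, suitably interpreted; the graded field of fractions $k(T)$ is then essentially a rational function field in $T$ over a piece of $k$, and a standard argument (as in the classical fact that $k(T)$ is never a finitely generated $k$-algebra, via primes of $k[T]$ / Artin–Tate) shows it cannot be a finitely generated $k$-algebra. Here the Artin–Tate theorem proved above, together with $k[T]$ being principal by Corollary~\ref{cor:principal}, gives the contradiction: if $k(T)$ were finite type over $k$, one runs the usual Euclid-style argument producing infinitely many pairwise non-associate homogeneous irreducibles, contradicting that finitely many denominators suffice.

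\medskip

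\emph{Main obstacle.} The routine direction is the first one; the delicate point is organizing the contrapositive so that a general dependence relation among the $g_i$ modulo $\rho(k)$ is genuinely reduced to the one-variable case $g_1 = 1$, since one must check that the change of variables by a matrix in $\mathrm{GL}_n(\Z)$ and by homogeneous scalars from $k$ is harmless for the finite-type property, and that partially adjoining the "independent" variables among the $g_i$ does not interfere. Once that reduction is in place, the failure of finite generation for $k(T)$ with $T$ homogeneous of order in $\rho(k)$ is exactly the classical argument, adapted verbatim to the graded setting using Corollary~\ref{cor:principal} and the Artin–Tate theorem.
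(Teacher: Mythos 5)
Your first implication is fine and coincides with the paper's argument: independence forces every homogeneous element of $k[\bg^{-1}\bT]$ to be a monomial $a\bT^{\bm}$, so the graded fraction field is just the Laurent algebra over $k$, visibly of finite type. The converse, however, has a genuine gap in the reduction. Dependence of $\bg$ modulo $\rho(k)$ only says that some nontrivial power product $g_1^{d_1}\cdots g_n^{d_n}$ lies in $\rho(k)$; a change of variables by a matrix of $\mathrm{GL}_n(\Z)$ brings this at best to $h_1^{e}\in\rho(k)$ with $e=\gcd(d_1,\ldots,d_n)$, and when $e>1$ neither a unimodular substitution nor a rescaling of a variable by a homogeneous scalar makes any of the orders land in $\rho(k)$. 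Already for $n=1$ with $g\notin\rho(k)$ but $g^2\in\rho(k)$, your normalization to the case $g_1=\rho(c)$, hence $g_1=1$, is impossible, and your endgame (irreducible homogeneous elements of $k[T]$ with $T$ of trivial order, treated as an ordinary rational function field) is built only for that special situation; the torsion case is therefore not covered. Moreover, the passage from $n$ variables to one, which you yourself single out as the main obstacle, is left as a sketch, and it is precisely where the work lies.

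Both points can be repaired along the lines of the paper. For the reduction: if $\bg$ is dependent, then after reordering $g_n$ is dependent on $\rho(k')$, where $k'=k(g_1^{-1}T_1,\ldots,g_{n-1}^{-1}T_{n-1})$, and $k(\bg^{-1}\bT)=k'(g_n^{-1}T_n)$; since a $k$-algebra of finite type is a fortiori a $k'$-algebra of finite type, it suffices to treat one variable whose order is merely torsion modulo the orders of the base field, not contained in them. For that case, suppose $k(g^{-1}T)=k[g^{-1}T,P_1(T)^{-1},\ldots,P_n(T)^{-1}]$ with $P_i$ homogeneous of order $h_i$; the dependence furnishes a nonzero integer $d$ and a homogeneous $\alpha\in k$ with $\prod_i h_i^{d}=\rho(\alpha)$, so that $Q=\prod_i P_i^{d}-\alpha$ is homogeneous, and Euclidean division (Corollary~\ref{cor:principal}) shows $Q$ is coprime to every $P_i$, whence $Q^{-1}$ cannot lie in that algebra. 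This adjustment by $\alpha$ is exactly what your Euclid-style step is missing when the $h_i$ do not lie in $\rho(k)$; note also that Artin--Tate plays no role in this direction.
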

\begin{proof}
Si $\bg=(g_{1},\ldots,g_{n})$ est ind\'ependante de~$\rho(k)$, nous avons
\[k(\bg^{-1}\bT) = k[g_{1}^{-1}T_{1},g_{1}S_{1},\ldots, g_{n}^{-1}T_{n},g_{n}S_{n}]/(T_{1}S_{1}-1,\ldots,T_{n}S_{n}-1)\]
et cette alg\`ebre gradu\'ee est donc de type fini sur~$k$.

Pour prouver la r\'eciproque, il suffit de d\'emontrer que si~$g$ est un \'el\'ement de~$G$ d\'ependant de~$\rho(k)$, alors le corps gradu\'e~$k(g^{-1} T)$ n'est pas de type fini. Consid\'erons donc un tel \'el\'ement~$g$ de~$G$  et supposons, par l'absurde, qu'il existe des \'el\'ements homog\`enes $P_{1}(T),\ldots,P_{n}(T)$ de~$k[g^{-1}T]$ d'ordres respectifs $h_{1},\ldots,h_{n}$ tels que 
\[k(g^{-1}T) = k[g^{-1}T,h_{1}P_{1}(T)^{-1},\ldots,h_{n}P_{n}(T)^{-1}].\]
Puisque~$g$ est d\'ependant de~$\rho(k)$, il existe un entier non nul~$d$ et un \'element non nul~$\alpha$ de~$k$ tels que $\prod_{i=1}^n h_{i}^d = \rho(\alpha)$. Le polyn\^ome $Q(T) = \prod_{i=1}^n P_{i}(T)^d - \alpha$ est donc un \'el\'ement homog\`ene de~$k[g^{-1}T]$. Si son inverse appartenait \`a l'alg\`ebre gradu\'ee $k[g^{-1}T,h_{1}P_{1}(T)^{-1},\ldots,h_{n}P_{n}(T)^{-1}]$, le polyn\^ome~$Q$ serait multiple de l'un (au moins) des polyn\^omes~$P_{i}$. Par division euclidienne, ceci est impossible.
\end{proof}

Signalons que les \'el\'ements alg\'ebriques sur les corps du type~$k(\bg^{-1}\bT)$,  o\`u~$\bg$ est une famille ind\'ependante de~$\rho(k)$, sont faciles \`a d\'ecrire.

\begin{lem}
Soient~$k$ un corps gradu\'e et~$\bg$ une famille d'\'el\'ements de~$G$ ind\'ependante de~$\rho(k)$. Tout \'el\'ement alg\'ebrique sur~$k(\bg^{-1}\bT)$ est le produit d'un \'el\'ement alg\'ebrique sur~$k$ par une racine d'un polyn\^ome de Kummer de la forme $X^n - \bT^\bm$, avec $n \ge 1$ et $\bm\in\Z^n$. 

En particulier, si le corps gradu\'e~$k$ est alg\'ebriquement clos, toute extension finie~$k(\bg^{-1}\bT)$ se plonge dans un corps gradu\'e de la forme~$k(\bh^{-1}\bS)$, o\`u~$\bh$ est une famille finie ind\'ependante de~$\rho(k)$.
\end{lem}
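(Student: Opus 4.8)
The plan is to reduce the statement to a concrete description of the integral closure of $k(\bg^{-1}\bT)$ inside an algebraic (graded) extension. First I would take a homogeneous element $x$, algebraic over $K := k(\bg^{-1}\bT)$, and consider its minimal polynomial $P(X) \in K[X]$, which exists and is $G$-homog\`ene by the remark following Corollary~\ref{cor:principal}. Clearing denominators, I may assume $P$ has coefficients in $k[\bg^{-1}\bT]$; writing $P(X) = \sum_{i=0}^d c_i(\bT)\, X^i$ with $c_d$ a unit (up to a nonzero homogeneous scalar of $k$ we may take $c_d = 1$), the homogeneity condition says that $\rho(c_i)\,\rho(x)^i$ is independent of $i$. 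Since the monomials $\bT^\bm$ are, by independence of $\bg$ from $\rho(k)$, the \emph{only} homogeneous elements of $k[\bg^{-1}\bT]$ up to homogeneous scalars of $k$, each $c_i$ is of the form $\alpha_i \bT^{\bm_i}$ with $\alpha_i \in k$ homogeneous. This already pins down the shape of $P$ very tightly.

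The key step is then to exploit the relation $\rho(c_i)\rho(x)^i = \rho(c_0)$ for all $i$ with $c_i \ne 0$. Comparing $i=0$ and the leading term gives $\rho(x)^d = \rho(c_0) = \rho(\alpha_0)\,\rho(\bT^{\bm_0})$, so $\rho(x)^d \in \rho(k)\cdot\la \bg\ra$; this is exactly the statement that some power of $x$ times a homogeneous scalar of $k$ is a monomial in the $\bT$'s. Set $n$ minimal with $x^n = \beta\, \bT^\bm$ for a homogeneous $\beta \in k$; then $y := x \cdot \zeta^{-1}$, where $\zeta$ is a chosen root of $X^n - \bT^\bm$, satisfies $y^n = \beta \in k$, hence $y$ is algebraic over $k$ (root of $X^n - \beta$, which is $G$-homog\`ene). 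This gives the factorisation $x = y\cdot\zeta$ with $y$ algebraic over $k$ and $\zeta$ a Kummer root, as claimed. One should check this works coefficient by coefficient for a general algebraic $x$ and not merely for those $x$ whose minimal polynomial is already of binomial type — but the rigidity of homogeneity forces the minimal polynomial of $x$ to divide such a binomial, using Corollary~\ref{cor:principal} and euclidean division in $k(\bg^{-1}\bT)[X]$ exactly as in Lemma~\ref{lem:corpstranscendant}.

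For the \og en particulier \fg\ clause, assume $k$ graded algebraically closed and let $L/k(\bg^{-1}\bT)$ be a finite extension, generated by finitely many homogeneous elements $x_1,\dots,x_r$. By the first part, each $x_j = y_j \zeta_j$ with $y_j$ algebraic over $k$, hence (since $k$ is algebraically closed, its finite extensions being trivial) $y_j \in k$; and $\zeta_j$ is a root of some $X^{n_j} - \bT^{\bm_j}$. Introducing new indeterminates $S_1,\dots,S_r$ with $\rho(S_j) = \rho(\bT)^{\bm_j/n_j}$ (a well-defined element of $G$ after passing, if necessary, to a divisible enough overgroup — but in fact the $\zeta_j$ already have a well-defined order in $G$, namely $\rho(\bT^{\bm_j})^{1/n_j}$ computed inside $L$, so one simply sets $\bh = (\rho(\zeta_1),\dots,\rho(\zeta_r))$ together with the $\bg$'s) gives an embedding $L \hookrightarrow k(\bh^{-1}\bS)$ sending $\zeta_j \mapsto S_j$. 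It remains only to observe that $\bh$ can be taken independent from $\rho(k)$: replace $\bh$ by a maximal subfamily independent from $\rho(k)$ and express the remaining orders, via their Kummer relations, as monomials in that subfamily times a scalar order, absorbing the scalar into $k$ since $k$ is algebraically closed.

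The main obstacle I anticipate is the bookkeeping in the first part: ensuring that the minimal polynomial of an arbitrary homogeneous algebraic $x$ really is forced into binomial form (or a divisor thereof) purely by the homogeneity constraint and the description of homogeneous elements of $k[\bg^{-1}\bT]$. The euclidean-division argument should settle it, but one must be careful that intermediate quotients and remainders, taken in $k(\bg^{-1}\bT)[X]$, still have homogeneous coefficients of the expected orders — i.e. that the graded structure is preserved throughout the division, which is where the hypothesis that $\bg$ is independent from $\rho(k)$ does the real work, guaranteeing there is \emph{no room} for extra homogeneous terms.
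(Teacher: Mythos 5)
Your reduction of the coefficients to monomials $c_i=\alpha_i\bT^{\bm_i}$ is correct and matches the paper's starting point, but the key step of your second paragraph contains a genuine gap. From homogeneity you deduce, correctly, that $\rho(x)^d=\rho(\alpha_0)\,\bg^{\bm_0}$, i.e.\ that the \emph{order} of $x^d$ lies in $\rho(k)\cdot\la\bg\ra$; you then assert that this \og is exactly the statement that some power of $x$ times a homogeneous scalar of $k$ is a monomial in the $\bT$'s \fg, and you build the whole decomposition on a minimal $n$ with $x^n=\beta\,\bT^{\bm}$, $\beta\in k$. This confuses an equality of orders in $G$ with an equality of elements: two homogeneous elements of the same order need not differ by a factor in $k$, and such an $n$ need not exist. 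The fallback you invoke --- that homogeneity forces the minimal polynomial of $x$ to divide a binomial $X^n-\beta\bT^{\bm}$ --- is also false. Concretely, take $k=\Q$ with the trivial grading, one variable $T$ of order $g$ independent of $\rho(k)=\{1\}$, and $x=wT$ where $w^2+w-1=0$: the minimal polynomial $X^2+TX-T^2$ is homogeneous of order $g^2$, yet no power $x^n=w^nT^n$ is of the form $\beta T^m$ with $\beta\in\Q$ (this would force $w^n\in\Q$, impossible for this unit of infinite order), so the minimal polynomial divides no binomial. The lemma itself is untouched ($x=w\cdot T$, with $w$ algebraic over $\Q$ and $T$ a root of $X-T$); what fails is your stronger claim that the algebraic factor can be taken radical over $k$.

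The paper's proof uses only the constant coefficient: writing the monic homogeneous minimal polynomial $P$ of degree $d$ with constant term $\alpha\bT^{\bm}$, one takes $y$ a root of the Kummer polynomial $X^d-\bT^{\bm}$ and observes that $x/y$ is annihilated by the monic homogeneous polynomial $y^{-d}P(yX)$. Homogeneity gives $\rho(\alpha_i)\bg^{\bm_i}=\rho(x)^{d-i}$ and $\rho(\alpha)\bg^{\bm}=\rho(x)^{d}$, whence $\bg^{\,d\bm_i-(d-i)\bm}\in\la\rho(k)\ra$, and the independence of $\bg$ from $\rho(k)$ forces $d\,\bm_i=(d-i)\,\bm$; consequently each coefficient $\alpha_i\,\bT^{\bm_i}y^{i-d}$ has $d$-th power $\alpha_i^d$, so it is algebraic over $k$ (the paper says it lies in $k$; being a $k$-scalar times a $d$-th root of unity is enough), and $x/y$ is algebraic over $k$. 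You should redo the first part along these lines. Your sketch of the \og en particulier \fg\ clause can then be kept in spirit (with $y_j\in k$ because $k$ is algebraically closed), but note that the orders $\rho(\zeta_j)$ are already elements of the fixed group $G$, so no enlargement of the grading group is available or needed, and the passage to a subfamily of orders independent of $\rho(k)$ deserves a more careful argument than the one you outline.
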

\begin{proof}
Soit~$x$ un \'el\'ement alg\'ebrique sur~$k(\bg^{-1}\bT)$. Consid\'erons son polyn\^ome minimal unitaire~$P(X)$, qui est un polyn\^ome homog\`ene. Son degr\'e~$d$ est un entier non nul et son coefficient constant est un \'el\'ement homog\`ene non nul de~$k(\bg^{-1}\bT)$, donc de la forme~$\alpha\bT^\bm$ avec $\alpha\in k^*$ et $\bm\in\Z^n$. Soit~$y$ une racine du polyn\^ome de Kummer~$X^d - \bT^\bm$. En divisant le polyn\^ome~$P(X)$ par~$y^d$, on montre que~$x/y$ est racine d'un polyn\^ome homog\`ene unitaire dont le coefficient constant est \'egal \`a~$\alpha$. En utilisant l'homog\'en\'eit\'e du polyn\^ome et le fait que~$\bg$ est ind\'ependante de~$\rho(k)$, on montre que tous ses coefficients appartiennent \`a~$k$. Autrement dit, $x/y$ est alg\'ebrique sur~$k$.
\end{proof}

D\'emontrons maintenant l'analogue du Nullstellensatz pour les alg\`ebres gradu\'ees.

\begin{cor}[Nullstellensatz gradu\'e]
Soit~$k$ un corps gradu\'e. Soit~$K$ une $k$-alg\`ebre gradu\'ee de type fini qui est un corps. Alors il existe une famille finie~$\bT$ d'\'el\'ements homog\`enes de~$K$ dont la famille des ordres~$\bg$ est ind\'ependante de~$\rho(k)$ telle que~$K$ soit une extension finie de~$k(\bg^{-1}\bT)$. 

En particulier, si le corps gradu\'e~$k$ est alg\'ebriquement clos, $K$ se plonge dans un corps de la forme~$k(\bh^{-1}\bS)$, o\`u~$\bh$ est une famille finie ind\'ependante de~$\rho(k)$.
\end{cor}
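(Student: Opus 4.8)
The plan is to imitate the classical Artin–Tate argument, using the graded analogues of the preliminary results established above. Let $K$ be a $k$-algebra graded of type fini which is a field; choose homogeneous generators $x_1,\dots,x_m$ of $K$ as a $k$-algebra. First I would argue by induction on $m$. If $m=1$, then $K = k[g^{-1}x_1]$ or $K = k(g^{-1}x_1)$ is either $k$ itself (if $x_1$ is algebraic over $k$), or the graded fraction field $k(g^{-1}T)$ with $g = \rho(x_1)$ independent of $\rho(k)$ — for if $g$ were dependent on $\rho(k)$, Lemme~\ref{lem:corpstranscendant} would say $k(g^{-1}T)$ is not of type fini, contradicting $K = k[g^{-1}x_1]$ being a field of type fini. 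So the base case is immediate.

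For the inductive step, set $k_1 = k(x_1)$, the graded subfield of $K$ generated by $x_1$. Then $K$ is a $k_1$-algebra graded of type fini (generated by $x_2,\dots,x_m$) which is a field, so by induction there exist homogeneous elements $y_1,\dots,y_r$ of $K$ whose orders form a family independent of $\rho(k_1)$ with $K$ finite over $k_1(\by^{-1}\bT)$. There are two cases. If $x_1$ is algebraic over $k$, then $k_1$ is finite over $k$, hence $\rho(k_1)$ and $\rho(k)$ generate the same subgroup up to finite index after raising to powers, the family $(y_1,\dots,y_r)$ orders remain independent of $\rho(k)$ (possibly after replacing them by suitable powers, using the homogeneous analogue), and $K$ is finite over $k(\by^{-1}\bT)$ by transitivity of finiteness; this is the easy case. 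The delicate case is when $x_1$ is transcendental over $k$: then $k_1 = k(g^{-1}x_1)$ with $g$ independent of $\rho(k)$, and I want to show that $K$ is \emph{not} of type fini over $k$ unless it is finite over some $k(\bh^{-1}\bS)$ with $\bh$ independent of $\rho(k)$. Here I apply Artin–Tate: if $K$ were finite over $k_1$, that is over $k(g^{-1}x_1)$, then since $K$ is an $A$-algebra of type fini with $A = k$ and $K$ a $k(g^{-1}x_1)$-module of type fini, the theorem forces $k(g^{-1}x_1)$ to be of type fini over $k$ — which it is, by Lemme~\ref{lem:corpstranscendant}, since $g$ is independent of $\rho(k)$. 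So this configuration is consistent, and I iterate: collect the transcendental generators $x_{i_1},\dots,x_{i_s}$ and the algebraic-over-the-preceding-field ones, showing that $K$ is finite over $k(\bg^{-1}\bT)$ where $\bg$ is the family of orders of the transcendental generators. The point where one must be careful is checking that this family $\bg$ is genuinely independent of $\rho(k)$ and not merely independent of $\rho$ of the previously constructed subfield: here the homogeneous Kummer-type description of algebraic elements (the Lemme preceding this corollary) and Lemme~\ref{lem:corpstranscendant} together guarantee that dependence of any $g_j$ on $\rho(k)$ would make an intermediate graded fraction field fail to be of type fini, contradicting that $K$, which contains it and is of type fini over $k$, would force it (via Artin–Tate applied with that intermediate field as the $B$-module base) to be of type fini.

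I expect the main obstacle to be this independence bookkeeping across the induction — transitivity of "independent of $E$" under change of $E$ is not formal, and one must combine it cleanly with the Artin–Tate theorem and the graded Kummer lemma rather than waving at the classical proof. Once the family $\bg$ is produced and shown independent of $\rho(k)$, the final sentence follows at once: if $k$ is graded algebraically closed, the preceding lemma says every finite extension of $k(\bg^{-1}\bT)$ embeds into some $k(\bh^{-1}\bS)$ with $\bh$ a finite family independent of $\rho(k)$, and $K$ being such a finite extension embeds there too.
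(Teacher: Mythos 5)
Your proposal assembles the right ingredients -- the graded Artin--Tate theorem, Lemma~\ref{lem:corpstranscendant}, and the Kummer-type lemma for the algebraically closed case -- but the core of your inductive step is logically backwards. In the \emph{delicate case} you assert that if $x_1$ is transcendental over $k$ then its order $g=\rho(x_1)$ is independent of $\rho(k)$. This is false in general: with the trivial grading every order lies in $\rho(k)$, yet transcendental elements certainly exist; what is true is that such an element cannot occur inside a graded field extension of finite type over $k$, and that is precisely the point to be proved, not an input. Having assumed the conclusion, your Artin--Tate step then only \emph{checks consistency} (\og which it is, by the Lemme, since $g$ is independent of $\rho(k)$ \fg), whereas it must be used in the other direction: from $K$ finite over the intermediate rational graded field, Artin--Tate applied to $k\subset k(\bg^{-1}\bT)\subset K$ forces $k(\bg^{-1}\bT)$ to be of finite type over $k$, and only then does Lemma~\ref{lem:corpstranscendant} yield the independence of~$\bg$. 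You do gesture at this contrapositive in your final sentence, but there you only exclude the dependence of an individual $g_j$, while the family $\bg$ can fail to be independent of $\rho(k)$ without any single $g_j$ being dependent (for instance $g_1g_2\in\langle\rho(k)\rangle$ with neither $g_1$ nor $g_2$ in it); the argument must be run for the family as a whole.

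Once this is straightened out, your induction collapses into the paper's one-step proof, which avoids all the $\rho(k_1)$-versus-$\rho(k)$ bookkeeping you flag as the main obstacle: choose a maximal family $\bT$ of homogeneous elements of $K$ algebraically independent over $k$, with orders $\bg$; by maximality every generator of $K$ is algebraic over $k(\bg^{-1}\bT)$, so $K$ is a finite extension of it; Artin--Tate applied to $k\subset k(\bg^{-1}\bT)\subset K$ shows $k(\bg^{-1}\bT)$ is of finite type over $k$; Lemma~\ref{lem:corpstranscendant} then gives the independence of $\bg$, and the lemma on algebraic elements over $k(\bg^{-1}\bT)$ gives the \og in particular \fg. (Your base case is also off: with a single generator $K=k[x_1]$, a transcendental $x_1$ is impossible, since $k[g^{-1}T]$ is never a graded field; so $K$ is a finite, not necessarily trivial, extension of $k$.)
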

\begin{proof}
Choisissons une famille maximale~$\bT$ d'\'el\'ements homog\`enes de~$k$ alg\'ebriquement ind\'ependants sur~$k$. Notons~$\bg$ la famille des ordres. Le corps~$K$ est une extension finie de~$k(\bg^{-1}\bT)$. D'apr\`es le th\'eor\`eme d'Artin-Tate appliqu\'e aux $k$-alg\`ebres gradu\'ees $k \subset k(\bg^{-1}\bT)\subset K$, le corps gradu\'e~$k(\bg^{-1}\bT)$ est de type fini, d'o\`u l'assertion, compte tenu du lemme pr\'ec\'edent.
\end{proof}

\begin{rem}
A.~Ducros propose au th\'eor\`eme~2.7 de~\cite{variationdimension} une version du Nullstellensatz pour les alg\`ebres $k$-affino\"{\i}des, o\`u il donne une description explicite de celles qui sont des corps. La r\'eduction d'une telle alg\`ebre est une $\ti{k}$-alg\`ebre gradu\'ee de type fini et les corps gradu\'es qui apparaissent dans le Nullstellensatz gradu\'e sont exactement les r\'eductions des corps qu'il appelle de type~I. Si les r\'eductions des autres, ceux de type~II, sont absents de notre \'enonc\'e, c'est parce qu'ils sont des corps, mais non des corps valu\'es (sauf lorsqu'ils sont aussi de type~I) et que leurs r\'eductions ne sont pas des corps gradu\'es, ni m\^eme d'ailleurs des anneaux gradu\'es int\`egres.
\end{rem}

Nous allons maintenant d\'emontrer une propri\'et\'e g\'eom\'etrique qui nous sera utile dans la suite du texte : une vari\'et\'e gradu\'ee  int\`egre d\'efinie sur un corps alg\'ebriquement clos reste int\`egre apr\`es toute extension du corps de base. Commen\c{c}ons par un cas simple.

\begin{lem}
Soient~$k$ un corps gradu\'e et~$A$ une $k$-alg\`ebre gradu\'ee int\`egre. Soit~$\bg$ une famille d'\'el\'ements de~$G$ ind\'ependante de~$\rho(k)$. Alors l'alg\`ebre gradu\'ee $A\otimes_{k} k(\bg^{-1}\bT)$ est int\`egre.
\end{lem}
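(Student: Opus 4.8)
The claim is that if $A$ is an integral graded $k$-algebra and $\bg$ is a family independent of $\rho(k)$, then $A\otimes_k k(\bg^{-1}\bT)$ is integral. The plan is to reduce to the one-variable case by induction and then argue directly. By the explicit presentation used in the proof of Lemma~\ref{lem:corpstranscendant}, we have $A\otimes_k k(\bg^{-1}\bT) = A[\bg^{-1}\bT]_{(\bT)}$, the localization of the graded polynomial ring $A[g_1^{-1}T_1,\ldots,g_n^{-1}T_n]$ at the multiplicative set of homogeneous monomials in the $T_i$. Localization at a multiplicative set of homogeneous elements preserves integrality, so it suffices to show that the graded polynomial algebra $A[\bg^{-1}\bT]$ is integral, and by an obvious induction on $n$ it suffices to treat $A[g^{-1}T]$ for a single $g\in G$.

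First I would record the structure of the homogeneous elements of $A[g^{-1}T]$: a nonzero homogeneous element is a polynomial $P=\sum_i a_i T^i$ in which every coefficient $a_i$ is homogeneous in $A$ and $\rho(a_i)g^i$ is independent of $i$ (equal to $\rho(P)$). The key point is that, because $g$ is independent of $\rho(k)$ — hence, since $A$ is a $k$-algebra, $g$ has infinite order modulo $\rho(A^\times\cap$ (homogeneous units)), and more to the point the monomials $T^i$ sit in distinct graded pieces over any given coefficient degree — two monomials $a_iT^i$ and $a_jT^j$ with $a_i,a_j\neq 0$ and $i\neq j$ lie in different graded components. Consequently the graded components of $A[g^{-1}T]$ are the natural ones: $(A[g^{-1}T])_h = \bigoplus_i A_{hg^{-i}}\,T^i$, and a homogeneous element of order $h$ is a \emph{single term} $aT^i$ precisely when only one coefficient degree $hg^{-i}$ is hit — but in general a homogeneous element of fixed order can still be supported on several powers of $T$ only if the corresponding coefficient degrees $hg^{-i}$ all lie in $\rho(A)$; the independence of $g$ from $\rho(k)$ does \emph{not} by itself forbid that when $\rho(A)$ is larger than $\rho(k)$, so one must be slightly careful here.

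The cleanest route around this subtlety is to pass to the graded fraction field $F = \mathrm{Frac}(A)$ of $A$ (which exists since $A$ is integral) and to observe that $g$ remains independent of $\rho(F)$: indeed $\rho(F)$ is generated by $\rho(A)$, and if $g^d\in\langle\rho(k),\rho(F)\rangle$ then clearing denominators shows $g^d\in\langle\rho(k),\rho(A)\rangle$, so some power of $g$ would lie in $\rho(k)$ after all — wait, that last implication needs $\rho(A)\subset\langle\rho(k)\rangle$, which fails in general, so instead I argue as follows. It is enough to show $A[g^{-1}T]$ injects into the \emph{integral} ring $F(g^{-1}T)$; since $A\hookrightarrow F$ is a flat (even a localization-type) extension of graded rings, $A[g^{-1}T]\hookrightarrow F[g^{-1}T]\hookrightarrow F(g^{-1}T)$, and the latter is a graded field by Lemma~\ref{lem:corpstranscendant} applied over the graded field $F$ — provided $g$ is independent of $\rho(F)$. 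To see that, note that if the image of $g$ in $G/\langle\rho(F)\rangle$ were torsion, then $k(g^{-1}T)\otimes_k F$ would, by Lemma~\ref{lem:corpstranscendant} over $F$, fail to be of finite type over $F$; but this algebra equals $F[g^{-1}T]$ localized at the powers of $T$, and it is a graded field that is finite — in fact generated by $g^{-1}T$ and its inverse — over $F$. That contradiction gives the independence, hence $F(g^{-1}T)$ is a graded field, and $A\otimes_k k(\bg^{-1}\bT)$, being a graded subring of the iterated construction over $F(g^{-1}T)$, is integral.

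**Main obstacle.** The delicate step is precisely the interaction between $\rho(A)$ and $g$: one must verify that independence of $g$ from $\rho(k)$ propagates to independence from $\rho(F)$, and this is not formal — it uses that $A$ is a $k$-\emph{algebra} so that no nonzero homogeneous element of $A$ becomes a scalar obstruction, together with Lemma~\ref{lem:corpstranscendant} to detect torsion via the finite-type property. Once that is in hand, everything else (localization preserves integrality, induction on the number of variables, the explicit presentation of the tensor product) is routine.
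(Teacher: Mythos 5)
Your reduction --- identify $A\otimes_{k} k(\bg^{-1}\bT)$ with the graded Laurent algebra obtained from $A[\bg^{-1}\bT]$ by inverting the homogeneous monomials in the $T_{i}$ (this is where independence of $\bg$ from $\rho(k)$ is used), note that localizing at nonzero homogeneous elements preserves integrality, and induct down to a single $g$ --- is sound and coincides with the paper's presentation $A\otimes_{k}k(g^{-1}T)\simeq A[g^{-1}T,gT^{-1}]$. The step you then build everything on, however, is false: independence of $g$ from $\rho(k)$ does \emph{not} propagate to independence from $\rho(F)$, where $F$ denotes the graded fraction field of $A$. For instance $A=k[g^{-1}S]$ is an integral graded $k$-algebra in which $g=\rho(S)$ lies in $\rho(F)$, while $g$ can perfectly well be independent of $\rho(k)$. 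Your argument by contradiction breaks down because $k(g^{-1}T)\otimes_{k}F$ is the Laurent algebra $F[g^{-1}T,gT^{-1}]$, which, when $g$ depends on $\rho(F)$, is neither the graded fraction field $F(g^{-1}T)$ (the object to which Lemma~\ref{lem:corpstranscendant} applies) nor a graded field: in the example above, $S-T$ is homogeneous of order $g$ and not invertible in the Laurent algebra. So no contradiction is obtained, and the intermediate statement you deduce from it is simply wrong.

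The detour is moreover unnecessary, and it leaves the real point unproved. The subtlety you flag --- a homogeneous element of $A[g^{-1}T]$ may involve several powers of $T$ once $\rho(A)$ is larger than $\rho(k)$ --- is exactly what the paper's one-line \emph{raisonnement sur les coefficients dominants} handles: if $P=\sum_{i\le m}a_{i}T^{i}$ and $Q=\sum_{j\le n}b_{j}T^{j}$ are nonzero homogeneous elements of $A[g^{-1}T,gT^{-1}]$ with $a_{m}\neq 0$, $b_{n}\neq 0$, then all coefficients are homogeneous in $A$ and the coefficient of $T^{m+n}$ in $PQ$ is $a_{m}b_{n}\neq 0$ because $A$ is integral; hence $PQ\neq 0$. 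No independence hypothesis is needed for this integrality; independence of $\bg$ from $\rho(k)$ serves only to identify $k(\bg^{-1}\bT)$ with the Laurent algebra and to remain available at each step of the induction. If you insist on passing through $F$, you must still prove that $F[g^{-1}T]$ is integral --- this is needed even to make sense of $F(g^{-1}T)$ --- and the only proof available is the same leading-coefficient computation; as written, your argument presupposes it while spending its effort on the false independence claim.
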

\begin{proof}
En proc\'edant par r\'ecurrence, il suffit de montrer le r\'esultat pour une famille~$\bg$ r\'eduite \`a un seul \'el\'ement~$g$. Dans ce cas, nous avons $k(g^{-1}T) \simeq k[g^{-1}T,gS]/(ST-1)$ et donc 
\[A\otimes_{k} k(g^{-1}T) \simeq A[g^{-1}T,gS]/(ST-1) \simeq A[g^{-1}T,gT^{-1}].\]
Un raisonnement sur les coefficients dominants permet de montrer que cette derni\`ere alg\`ebre gradu\'ee est int\`egre.
\end{proof}

En utilisant le Nullstellensatz gradu\'e, nous allons en d\'eduire le cas g\'en\'eral.

\begin{thm}\label{thm:geointegre}
Soient~$k$ un corps gradu\'e alg\'ebriquement clos et~$A$ une $k$-alg\`ebre gradu\'ee de type fini int\`egre. Alors, pour toute extension gradu\'ee~$K$ de~$k$, l'alg\`ebre gradu\'ee $A\otimes_{k} K$ est int\`egre.
\end{thm}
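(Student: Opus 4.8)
The plan is to reduce the general case to the two special cases already established: extension by a field of the form $k(\bg^{-1}\bT)$ with $\bg$ independent of $\rho(k)$ (the previous lemma), and a finite extension (which is trivial once we know the base is algebraically closed, since then every finite extension is of the form $k(\bh^{-1}\bS)$ by the graded Nullstellensatz, or indeed trivial in the relevant sense). First I would observe that it suffices to treat the case where $K$ itself is a \emph{finitely generated} graded extension of $k$: indeed, if $f, h$ are nonzero homogeneous elements of $A \otimes_k K$ with $fh = 0$, only finitely many homogeneous elements of $K$ occur among the coefficients (in a presentation of $A$ as a quotient of $k[\bg^{-1}\bT]$, or in expressions for $f$ and $h$), so $f$ and $h$ already live in $A \otimes_k K_0$ for a finitely generated graded subextension $K_0 \subset K$; a zero divisor upstairs is a zero divisor in $A \otimes_k K_0$.

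Next I would reduce from a finitely generated graded \emph{algebra} extension to the field case. Since $A$ is integral, the issue of $A \otimes_k K$ being integral is controlled by its total graded ring of fractions: writing $F = \Frac^{\mathrm{gr}}(A)$ for the graded field of fractions of $A$, the algebra $A \otimes_k K$ embeds in $F \otimes_k K$ (the map is injective because $A$ is a flat, indeed free, $k$-module so tensoring the injection $A \hookrightarrow F$ by $K$ stays injective — here one uses that $k$ is a graded field, so every graded $k$-module is free), and a subring of an integral graded ring is integral. So it is enough to prove that $F \otimes_k K$ is integral when $F$ is a graded field extension of $k$ that is integral (automatic) and $K$ is a finitely generated graded field extension of $k$. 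By the graded Nullstellensatz applied to $K$, there is a finite family $\bT$ of homogeneous elements of $K$ whose orders $\bg$ are independent of $\rho(k)$ such that $K$ is a finite extension of $k(\bg^{-1}\bT)$; and since $k$ is algebraically closed, this finite extension embeds into some $k(\bh^{-1}\bS)$ with $\bh$ a finite family independent of $\rho(k)$. Thus $K$ itself embeds into a graded field of the form $k(\bh^{-1}\bS)$, and it suffices to show $F \otimes_k k(\bh^{-1}\bS)$ is integral — which is exactly the content of the previous lemma (applied with the integral algebra $F$ in place of $A$, noting $F$ need not be of finite type but the lemma's proof only uses integrality).

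The main obstacle I anticipate is making the flatness/injectivity step fully rigorous in the graded setting: one must check that $A \hookrightarrow F$ remains injective after $\otimes_k K$, which rests on the fact that over a graded field every graded module is free (the graded analogue of "vector spaces have bases"), so that $-\otimes_k K$ is exact on graded $k$-modules. Once that structural fact is in hand — and it is part of the "notions of commutative algebra extend to this setting" that the paper has already invoked — the rest is assembling the reductions above. A secondary subtlety is that $F = \Frac^{\mathrm{gr}}(A)$ is generally not of finite type over $k$, so the graded Nullstellensatz cannot be applied to $F$; but we only need to apply it to $K$, which \emph{is} finitely generated after the first reduction, so this causes no difficulty.
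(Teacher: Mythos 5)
There is a genuine gap at the decisive step, namely when you apply the graded Nullstellensatz to $K$ (after having reduced to a finitely generated graded subextension $K_{0}$ and passed to fraction fields). The Nullstellensatz of this paper concerns graded fields that are of finite type \emph{as $k$-algebras}, and a graded field extension generated by finitely many homogeneous elements is in general not of finite type as an algebra: this is precisely the content of the lemme~\ref{lem:corpstranscendant}, which says that $k(g^{-1}T)$ is of finite type over~$k$ only when~$g$ is independent of~$\rho(k)$. Concretely, take $g\in\rho(k)$ (for instance the trivially graded case): $K=k(g^{-1}T)$ is a finitely generated graded field extension of the algebraically closed graded field~$k$, yet it is neither finite over any $k(\bg^{-1}\bT)$ with~$\bg$ independent of~$\rho(k)$ nor embeddable into any $k(\bh^{-1}\bS)$ with~$\bh$ independent of~$\rho(k)$ — in such a field every homogeneous element is a monomial $a\bS^{\bm}$ with $a\in k$, so in the trivially graded case it is just~$k$, and $k(T)$ does not embed into~$k$. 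Hence your claim \og $K$ embeds into some $k(\bh^{-1}\bS)$ \fg\ is false, and the final reduction to the previous lemma collapses. Note also that your intermediate target, \og $F\otimes_{k}K$ is integral for graded field extensions $F$, $K$ of an algebraically closed~$k$ \fg, is in the classical case a statement that one normally \emph{deduces} from the theorem being proved, so passing to fraction fields does not by itself make progress.

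What is missing is a specialization argument in place of the passage to fraction fields. The paper writes $A=k[\bg^{-1}\bT]/(a_{1},\dots,a_{n})$, assumes $PQ=\sum_{i}a_{i}b_{i}$ in $K[\bg^{-1}\bT]$ with $P,Q\notin(a_{1},\dots,a_{n})$, chooses in a suitable graded extension homogeneous points $\bx,\by$ annihilating the~$a_{i}$ with $P(\bx)\ne 0$ and $Q(\by)\ne 0$, and forms the finitely generated graded $k$-algebra~$B$ generated by the coordinates of~$\bx$ and~$\by$, the coefficients of $P,Q,b_{1},\dots,b_{n}$, and the inverses of $P(\bx)$ and $Q(\by)$. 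The Nullstellensatz then legitimately applies to a finite-type residue field of~$B$ and yields a morphism $\varphi\colon B\to k(\bh^{-1}\bS)$ fixing~$k$; inverting exactly those two elements guarantees that $\varphi(P)$ and $\varphi(Q)$ remain outside the ideal $(a_{1},\dots,a_{n})$ after specialization, so $A\otimes_{k}k(\bh^{-1}\bS)$ is not integral, contradicting the previous lemma. Inverting only finitely many carefully chosen elements, rather than all of them, is exactly what keeps the finite-type hypothesis of the Nullstellensatz available, and this is the idea your proposal lacks; without it the argument cannot be repaired along the lines you describe.
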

\begin{proof}
\'Ecrivons l'alg\`ebre~$A$ comme quotient d'un anneau de polyn\^omes $k[\bg^{-1} \bT] = k[g_{1}^{-1}T_{1},\ldots, g_{r}^{-1} T_{r}]$ par un id\'eal gradu\'e~$I$. Cet id\'eal est engendr\'e par des \'el\'ements homog\`enes $a_{1},\ldots,a_{n}$ de~$k[\bg^{-1} \bT]$. 

Soit~$K$ une extension gradu\'ee de~$k$. Nous pouvons supposer qu'elle est alg\'e\-bri\-quement close. Supposons, par l'absurde, que l'alg\`ebre gradu\'ee $A\otimes_{k} K = K[\bg^{-1} \bT]/(a_{1},\ldots,a_{n})$ n'est pas int\`egre. Il existe alors des polyn\^omes $\bg$-homog\`enes~$P$ et~$Q$ n'appartenant pas \`a l'id\'eal $(a_{1},\ldots,a_{n})$ et des polyn\^omes $\bg$-homog\`enes $b_{1},\ldots,b_{n}$ tels que l'on ait
\[PQ = \sum_{i=1}^n a_{i}b_{i} \textrm{ dans } K[\bg^{-1} \bT].\]

Il existe une extension gradu\'ee~$L$ de~$K$ et un \'el\'ement homog\`ene~$\bx$ de~$L^n$ tels que~$P(\bx)\ne 0$ et, pour tout~$i$, $a_{i}(\bx)=0$. Il existe \'egalement un \'el\'ement homog\`ene~$y$, que nous pouvons supposer appartenir au m\^eme~$L^n$, tel que \mbox{$Q(y)\ne 0$} et, pour tout~$i$, $a_{i}(y)=0$.

Soit~$B$ la sous-alg\`ebre gradu\'ee de~$K(\bg'^{-1}\bT')$ engendr\'ee sur~$k$ par les coordonn\'ees de~$\bx$ et de~$\by$, par $1/P(\bx)$, $1/P(\by)$, ainsi que les coefficients des polyn\^omes~$P$, $Q$, $b_{1},\ldots,b_{n}$. Par le Nullstellensatz gradu\'e, il existe une famille finie~$\bh$ d'\'el\'ements de~$G$, ind\'ependante de~$\rho(k)$, et un morphisme $\varphi : B \to k(\bh^{-1}\bS)$ qui induit l'identit\'e sur~$k$. Par construction, nous avons $\varphi(P)(\bx)\ne 0$, $\varphi(Q)(\by)\ne 0$, pour tout~$i$, $a_{i}(\bx)=a_{i}(\by)=0$, ainsi que l'\'egalit\'e
\[\varphi(P)\varphi(Q) = \sum_{i=1}^n a_{i}\varphi(b_{i}) \textrm{ dans } k(\bh^{-1}\bS)[\bg^{-1} \bT].\]
On en d\'eduit que l'anneau gradu\'e $A\otimes_{k}k(\bh^{-1}\bS)$ n'est pas int\`egre, ce qui contredit le lemme qui pr\'ec\`ede.
\end{proof}

\section{Points universels}\label{section:universel}

Dans cette section, nous nous int\'eressons aux morphismes d'extension des scalaires entre espaces analytiques, du type $X_{K} \to X_{k}$. Plus pr\'ecis\'ement, nous cherchons \`a comprendre sous quelles conditions les points de~$X_{k}$ peuvent se relever canoniquement \`a~$X_{K}$.

Commen\c{c}ons par signaler qu'en ce qui concerne les normes sur les produits tensoriels de modules norm\'es sur un anneau norm\'e, nous suivons les conventions habituelles du domaine (\emph{cf}~\cite{BGR}, \S 2.1.7 ou~\cite{rouge}, fin du \S 1.1) : si~$\As$ d\'esigne un anneau norm\'e et~$M$ et~$N$ deux $\As$-modules semi-norm\'es, nous d\'efinissons une semi-norme sur le produit tensoriel $M\otimes_{\As} N$ en posant, pour~$f$ dans~$M\otimes_{\As} N$,
\[\|f\| = \inf(\max_{i}(\|m_{i}\|\, \|n_{i}\|)),\]
la borne inf\'erieure portant sur l'ensemble des r\'epr\'esentations de~$f$ sous la forme $\sum_{i} m_{i}\otimes n_{i}$, avec $m_{i}\in M$ et $n_{i}\in N$. Nous d\'efinissons ensuite une $\As$-alg\`ebre de Banach~$M\ho_{\As} N$ en compl\'etant~$M\otimes_{\As} N$ par rapport \`a cette semi-norme.

D\'emontrons, \`a pr\'esent, un lemme technique, qui nous sera utile \`a plusieurs reprises par la suite.

\begin{lem}\label{lem:isometrie2}
Soit $A \to B$ un morphisme isom\'etrique de $k$-espaces vectoriels norm\'es. Soit~$C$ un $k$-espace vectoriel norm\'e. Alors le morphisme canonique \mbox{$A\otimes_{k} C \to B \otimes_{k} C$} est encore une isom\'etrie.

En particulier, le morphisme canonique $A\ho_{k} C \to B \ho_{k} C$ est une isom\'etrie.
\end{lem}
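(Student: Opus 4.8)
The plan is to reduce the statement to the already-fixed conventions on tensor product semi-norms and to a concrete description of the quotient semi-norm on $B\otimes_{k} C$. First I would recall that, by definition, for $f\in A\otimes_{k} C$ one has $\|f\|_{A\otimes C} = \inf\max_{i}(\|a_{i}\|\,\|c_{i}\|)$, the infimum running over all representations $f=\sum_{i} a_{i}\otimes c_{i}$ with $a_{i}\in A$, $c_{i}\in C$, and similarly for $B\otimes_{k} C$. Since the canonical map $A\otimes_{k} C\to B\otimes_{k} C$ sends such a representation to a representation of the image (via $a_{i}\mapsto$ its image in $B$, which has the same norm because $A\to B$ is isometric), we immediately get that the image of $f$ has norm $\le \|f\|_{A\otimes C}$; that is, the canonical map is norm-decreasing. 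This inequality is the easy half and requires no hypothesis beyond $A\to B$ being norm-decreasing.

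The substantive point is the reverse inequality: given $f\in A\otimes_{k} C$, and given an arbitrary representation of its image $\sum_{j} b_{j}\otimes c_{j}$ in $B\otimes_{k} C$, I must produce a representation of $f$ itself inside $A\otimes_{k} C$ whose value is not much larger than $\max_{j}(\|b_{j}\|\,\|c_{j}\|)$. The key reduction is to fix a finite-dimensional subspace: $f$ lies in $A_{0}\otimes_{k} C_{0}$ for some finite-dimensional $A_{0}\subset A$ and $C_{0}\subset C$, and one checks that the image of $f$ in $B\otimes_{k} C$ already lies in $B_{0}\otimes_{k} C_{0}$ where $B_{0}$ is the image of $A_{0}$ — here one uses that $A_{0}\to B_{0}$ is an isomorphism of (finite-dimensional) normed spaces which is moreover isometric by hypothesis. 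Thus computing the quotient norm is a finite-dimensional linear-algebra problem: one writes $C_{0}$ with a basis, expresses the tensor $f$ in coordinates, and observes that the optimization defining $\|f\|_{A_{0}\otimes C_{0}}$ and the one defining the norm of its image in $B_{0}\otimes C_{0}$ are transported to each other by the isometry $A_{0}\cong B_{0}$. Equivalently, and perhaps cleaner: $A\otimes_{k} C\to B\otimes_{k} C$ is injective (tensoring over a field preserves injections), and one shows it is a closed isometric embedding by comparing the two infima directly on a common representation, using that any $B$-representation of the image can be pushed back through the isometric identification of the relevant finite-dimensional pieces.

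The main obstacle I anticipate is precisely this matter of replacing an \emph{arbitrary} $B$-representation of the image by one coming from $A$: a priori a representation $\sum_{j} b_{j}\otimes c_{j}$ need not have its $b_{j}$ in the image of $A$, so one cannot naively pull it back. The finite-dimensional reduction circumvents this, because once everything is confined to $B_{0}=\operatorname{im}(A_{0})$, every element is the image of a uniquely determined element of $A_{0}$ with the same norm, and the $c_{j}$ can likewise be taken in the finite-dimensional $C_{0}$ without changing the infimum. Once the isometry for the algebraic tensor products is established, the statement for the completed tensor products $A\ho_{k} C\to B\ho_{k} C$ follows formally: an isometry between normed spaces extends to an isometry between their completions, and $A\ho_{k} C$, $B\ho_{k} C$ are by definition the completions of $A\otimes_{k} C$, $B\otimes_{k} C$ for these semi-norms.
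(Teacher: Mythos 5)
Your easy inequality and the final passage to completions are fine, but the central step of your argument is circular. The seminorm of the image of $f$ in $B\otimes_{k}C$ is by definition an infimum over \emph{all} representations $\sum_{j}b_{j}\otimes c_{j}$ with $b_{j}\in B$ and $c_{j}\in C$; the fact that this image \emph{lies} in $B_{0}\otimes_{k}C_{0}$ (with $B_{0}$ the image of $A_{0}$) in no way allows you to restrict the infimum to representations inside $B_{0}\otimes_{k}C_{0}$. That restriction is itself an instance of the very lemma you are proving, applied to the isometric inclusions $B_{0}\hookrightarrow B$ and $C_{0}\hookrightarrow C$, so the sentence ``once everything is confined to $B_{0}=\operatorname{im}(A_{0})$'' begs the question: a near-optimal representation of the image of $f$ will in general involve vectors $b_{j}$ lying neither in $A$ nor in any space you control, and your outline gives no way to convert it into a representation in $A\otimes_{k}C$ of comparable value. (Injectivity of $A\otimes_{k}C\to B\otimes_{k}C$ is true but irrelevant here; the issue is purely the comparison of the two infima.)

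The missing mechanism is the one the paper uses at exactly this point. Fix $\eps>0$ and an arbitrary representation $f=\sum_{n}b_{n}\otimes c_{n}$ in $B\otimes_{k}C$ with $\max_{n}(\|b_{n}\|\,\|c_{n}\|)\le \|f\|_{B}+\eps$; choose a finite-dimensional $A_{0}\subset A$ with $f\in A_{0}\otimes_{k}C$ and a finite-dimensional $B_{0}\subset B$ containing $A_{0}$ \emph{and all the} $b_{n}$; then, for $\alpha>1$, invoke the existence of an $\alpha$-cart\'esienne basis of $B_{0}$ adapted to $A_{0}$ (\cite{BGR}, proposition~2.6.2/3). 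Expanding the $b_{n}$ in this basis and discarding the coordinates outside $A_{0}$ produces an honest representation of $f$ in $A_{0}\otimes_{k}C$ (since $f$ lies in $A_{0}\otimes_{k}C$, the components along a complement of $A_{0}$ in $B_{0}$ cancel), and the $\alpha$-cartesian estimate yields $\|f\|_{A}\le\alpha(\|f\|_{B}+\eps)$, whence the result by letting $\alpha\to1$ and $\eps\to0$. Note finally that this tool requires the valuation of $k$ to be nontrivial, which is why the paper first extends scalars isometrically to $k_{r}$ with $r\notin\sqrt{|k^*|}$; your proposal does not address the trivially valued case at all.
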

\begin{proof}
Si~$r$ est un nombre r\'eel n'appartenant pas \`a~$\sqrt{|k^*|}$, pour tout $k$-espace vectoriel norm\'e~$V$, l'injection canonique $V \to V\otimes_{k}k_{r}$ est isom\'etrique. Quitte \`a \'etendre les scalaires \`a~$k_{r}$, avec $r\notin \sqrt{|k^*|}$, nous pouvons donc supposer que la valuation du corps~$k$ n'est pas triviale. Quitte \`a remplacer~$A$ par son image dans~$B$, nous pouvons \'egalement supposer que $A \subset B$.

Nous noterons~$\|.\|_{A}$ et~$\|.\|_{B}$ les normes tensorielles sur~$A\otimes_{k} C$ et~$B \otimes_{k} C$. Soit~$f$ un \'el\'ement de~$A\otimes_{k} C$. Nous souhaitons montrer que $\|f\|_{A} = \|f\|_{B}$ et, pour ce faire, il suffit de montrer que $\|f\|_{A} \le \|f\|_{B}$, l'autre in\'egalit\'e \'etant imm\'ediate. 

Soit~$\eps > 0$. Il existe des \'el\'ements $b_{1},\ldots,b_{d}$ de~$B$ et $c_{1},\ldots,c_{d}$ de~$C$ tels que
\[f = \sum_{n=1}^d b_{n} \otimes c_{n} \textrm{ dans } B\otimes C\]
et 
\[\|f\|_{B} \le \max_{1\le n\le d} (\|b_{n}\|\, \|c_{n}\|) \le \|f\|_{B} + \eps.\]

Soit~$A_{0}$ un sous-espace vectoriel de dimension finie de~$A$ tel que $f\in A_{0}\otimes_{k} C$. Soit~$B_{0}$ un sous-espace vectoriel de dimension finie de~$B$ contenant~$A_{0}$ et les~$b_{i}$. Soit $\alpha>1$. D'apr\`es~\cite{BGR}, proposition 2.6.2/3, il existe deux entiers $s\ge r$ et une famille~$(\beta_{i})_{1\le i\le r}$ d'\'el\'ements de~$B_{0}$ tels que $(\beta_{1},\ldots,\beta_{s})$ soit une base de~$A_{0}$ et $(\beta_{1},\ldots,\beta_{r})$ soit une base $\alpha$-cart\'esienne de~$B_{0}$. Rappelons que cette derni\`ere condition signifie que pour tout \'el\'ement $b = \sum_{i=1}^r \lambda_{i} \beta_{i}$ de~$B_{0}$, nous avons 
\[\max_{1\le i\le r} (|\lambda_{i}|\, \|\beta_{i}\|) \le \alpha \|b\|.\]

\'Ecrivons chacun des~$b_{n}$ sous la forme 
$b_{n} = \sum_{i=1}^r \lambda_{n,i} \beta_{i}$. Nous avons alors
\begin{align*}
 \max_{1\le n\le r} (\|b_{n}\|\, \|c_{n}\|) &\ge \alpha^{-1}\, \max_{1\le n\le d, 1\le i\le r} (|\lambda_{n,i}|\, \|\beta_{i}\|\, \|c_{n}\|)\\
&\ge \alpha^{-1}\, \max_{1\le n\le d, 1\le i \le s} (|\lambda_{n,i}|\, |\beta_{i}|\, \|c_{n}\|)\\
&\ge \alpha^{-1}\, \|f\|_{A}
\end{align*}
car $f = \sum_{n=1}^d \left(\sum_{i=1}^s \lambda_{n,i} \beta_{i}\right) \otimes c_{n}$ dans~$A\otimes_{k} C$. Par cons\'equent, nous avons
\[\|f\|_{A} \le \alpha(\|f\|_{B} + \eps).\]
En utilisant le fait que cette in\'egalit\'e vaut pour tout $\alpha>1$, puis pour tout~$\eps > 0$, on en d\'eduit le r\'esultat attendu.
\end{proof}

Pr\'ecisons maintenant la notion de rel\`evement canonique \'evoqu\'ee plus haut.

\begin{defi}
Une $k$-alg\`ebre de Banach commutative et unitaire~$\As$ est dite {\bf universellement multiplicative} sur~$k$ si, pour toute extension valu\'ee compl\`ete~$K$ de~$k$, la norme de l'alg\`ebre $\As \ho_{k} K$ est multiplicative.

Soient~$\As$ une $k$-alg\`ebre de Banach commutative et unitaire. On dit qu'un point~$x$ de~$X=\Mc(\As)$ est {\bf universel} sur~$k$ si son corps r\'esiduel compl\'et\'e~$\Hs(x)$ est universellement multiplicatif sur~$k$. Nous noterons~$X_{u}$ l'ensemble des points universels de~$X$ sur~$k$.
\end{defi}

Soient~$\As$ une $k$-alg\`ebre de Banach commutative et unitaire. Soient~$x$ un point de~$X=\Mc(\As)$ et~$K$ une extension valu\'ee compl\`ete de~$k$. Si le point~$x$ est universel ou si le corps~$K$ est universel, nous disposons d'un point canonique de~$X_{K}$ au-dessus de~$x$ : celui qui correspond \`a la norme de l'alg\`ebre $\Hs(x)\hat{\otimes}_{k} K$. Nous le noterons~$\sigma_{K/k}(x)$, ou simplement~$\sigma_{K}(x)$ si aucune confusion n'en r\'esulte. 
 
Remarquons que pour $x\in X$ et $k \subset L \subset K$, lorsque ces quantit\'es sont d\'efinies, nous avons $\sigma_{K/k}(x) = \sigma_{K/L}(\sigma_{L/k}(x))$.

\begin{rem}
Notre notion de point universel n'est autre que celle de \og peaked point \fg\ d\'efinie par V.~Berkovich dans~\cite{rouge}, \S 5.2 (qui note donc~$X_{p}$ l'ensemble que nous notons~$X_{u}$). Si la notion de pic rend bien compte du comportement du point dans sa fibre apr\`es extension des scalaires, elle nous semble trompeuse lorsque l'on s'int\'eresse \`a l'ensemble de ces points. Nous montrerons par exemple \`a la fin de cette section que, sur un corps alg\'ebriquement clos, tout point est un pic, ce qui est difficilement conciliable avec l'intuition. 
\end{rem}

Nous g\'en\'eralisons ici le lemme~5.2.6 et le corollaire~5.2.7 de~\cite{rouge}. Rappelons tout d'abord une d\'efinition.

\begin{defi}
Un $k$-espace de Banach est dit {\bf de type d\'enombrable} sur~$k$ s'il contient un sous-$k$-espace vectoriel dense de dimension d\'enombrable.
\end{defi}

Nous utiliserons surtout cette d\'efinition pour des extensions valu\'ees compl\`etes du corps~$k$. C'est donc dans ce sens qu'il faudra entendre extension de type d\'enombrable du corps~$k$.

\begin{rem}
On pourrait d\'efinir de fa\c{c}on analogue la notion de $k$-espace de Banach de type fini, mais celle-ci co\"{\i}ncide avec la notion de $k$-espace de Banach de dimension finie, d'apr\`es~\cite{BGR}, proposition~2.3.3/4.
\end{rem}

\begin{lem}\label{lem:continuitenorme}
Soient~$\As$ une $k$-alg\`ebre de Banach commutative et unitaire et~$B$ un $k$-espace de Banach. Soit~$\varphi$ un \'el\'ement de~$\As\ho_{k} B$. Pour tout point~$x$ de~$\Mc(\As)$, notons~$\|.\|_{x,B}$ la norme tensorielle sur~$\Hs(x)\ho_{k} B$. Alors la fonction $x\in \Mc(\As) \mapsto \|\varphi\|_{x,B}$ est continue.
\end{lem}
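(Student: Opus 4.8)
Le plan est de se ramener, par deux r\'eductions formelles, au cas d'un espace~$B$ de dimension finie, puis d'exploiter l'existence de bases cart\'esiennes. Tout d'abord, le morphisme $\As\to\Hs(x)$ \'etant contractant (on a $|f|_x\le\|f\|$), il en va de m\^eme de l'application $\As\ho_k B\to\Hs(x)\ho_k B$, si bien que $\big|\,\|\varphi\|_{x,B}-\|\varphi_0\|_{x,B}\,\big|\le\|\varphi-\varphi_0\|$ pour tout $\varphi_0\in\As\otimes_k B$, uniform\'ement en~$x$. Une limite uniforme de fonctions continues \'etant continue, il suffit de traiter le cas d'un \'el\'ement $\varphi=\sum_{i=1}^n a_i\otimes b_i$ de~$\As\otimes_k B$. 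Posant $B_0=kb_1+\cdots+kb_n$, de dimension finie, l'inclusion isom\'etrique $B_0\hookrightarrow B$ donne, via le lemme~\ref{lem:isometrie2}, une isom\'etrie $\Hs(x)\otimes_k B_0\to\Hs(x)\otimes_k B$ ; comme~$\varphi$ provient de $\As\otimes_k B_0$, il vient $\|\varphi\|_{x,B}=\|\varphi\|_{x,B_0}$, cette quantit\'e se calculant dans l'espace $\Hs(x)\otimes_k B_0$, de dimension finie sur~$\Hs(x)$, donc complet. On est ainsi ramen\'e au cas o\`u~$B$ est de dimension finie.

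Dans ce cas, qui constitue la partie essentielle de la preuve, nous commencerions par nous ramener \`a une valuation non triviale sur~$k$, en \'etendant les scalaires \`a un corps~$k_r$ ($r\notin\sqrt{|k^*|}$) comme dans la preuve du lemme~\ref{lem:isometrie2} : la surjection continue $\Mc(\As\ho_k k_r)\to\Mc(\As)$, entre espaces compacts s\'epar\'es, est une application quotient, et la fonction analogue construite sur~$k_r$ en est l'image r\'eciproque (ce que l'on v\'erifie encore \`a l'aide du lemme~\ref{lem:isometrie2}). Fixons alors $\alpha>1$. D'apr\`es~\cite{BGR}, proposition~2.6.2/3, l'espace~$B_0$ admet une base $\alpha$-cart\'esienne $(e_1,\ldots,e_m)$ ; \'ecrivons $\varphi=\sum_{j=1}^m a^{(j)}\otimes e_j$ avec $a^{(j)}\in\As$. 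Pour tout point~$x$, l'image de~$\varphi$ dans $\Hs(x)\otimes_k B_0$ s'\'ecrit $\sum_j\overline{a^{(j)}}\otimes e_j$, o\`u $\overline{a^{(j)}}$ d\'esigne l'image de~$a^{(j)}$ dans~$\Hs(x)$, de norme $|a^{(j)}|_x$ ; cette repr\'esentation donne $\|\varphi\|_{x,B_0}\le\max_j|a^{(j)}|_x\,\|e_j\|$, tandis qu'en d\'ecomposant une repr\'esentation quelconque de cette image le long de la base~$(e_j)$ et en utilisant la $\alpha$-cart\'esianit\'e on obtient $\max_j|a^{(j)}|_x\,\|e_j\|\le\alpha\,\|\varphi\|_{x,B_0}$. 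La fonction $g_\alpha : x\mapsto\max_{1\le j\le m}|a^{(j)}|_x\,\|e_j\|$ \'etant continue (maximum fini de fonctions continues), on aboutit \`a l'encadrement
\[\alpha^{-1}\,g_\alpha(x)\ \le\ \|\varphi\|_{x,B_0}\ \le\ g_\alpha(x),\]
valable pour tout point~$x$ et tout~$\alpha>1$ ; en faisant tendre~$\alpha$ vers~$1$, on en conclut que $x\mapsto\|\varphi\|_{x,B_0}$ est \`a la fois semi-continue sup\'erieurement et inf\'erieurement, donc continue.

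L'obstacle principal est le traitement de la dimension finie, et plus pr\'ecis\'ement la construction de coordonn\'ees $\alpha$-cart\'esiennes sur~$B_0$ (avec la r\'eduction, classique, \`a une valuation non triviale) : l'encadrement une fois obtenu, la continuit\'e ne repose plus que sur celle, \'evidente, des fonctions $x\mapsto|a^{(j)}|_x$. Les deux r\'eductions pr\'eliminaires, quant \`a elles, ne font intervenir que le lemme~\ref{lem:isometrie2}.
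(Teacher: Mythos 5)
Your argument is correct, but it takes a genuinely different route from the paper's. The paper's proof is much shorter: since $\varphi$ is a limit of elements of $\As\otimes_{k}B$, each involving only finitely many vectors of~$B$, it lies in $\As\ho_{k}B_{0}$ for a subspace $B_{0}\subset B$ of countable type; the continuity of $x\mapsto\|\varphi\|_{x,B_{0}}$ is then quoted from \cite{rouge}, lemme~5.2.6, and Lemma~\ref{lem:isometrie2} identifies this function with $x\mapsto\|\varphi\|_{x,B}$. You instead make the statement self-contained: the uniform bound $\bigl|\,\|\varphi\|_{x,B}-\|\varphi_{0}\|_{x,B}\,\bigr|\le\|\varphi-\varphi_{0}\|$ reduces to $\varphi_{0}\in\As\otimes_{k}B$, Lemma~\ref{lem:isometrie2} reduces further to a finite-dimensional~$B_{0}$, and the two-sided estimate $\alpha^{-1}g_{\alpha}\le\|\varphi_{0}\|_{x,B_{0}}\le g_{\alpha}$ obtained from an $\alpha$-Cartesian basis (\cite{BGR}, proposition~2.6.2/3), after the same reduction to a nontrivially valued field as in the proof of Lemma~\ref{lem:isometrie2}, gives continuity by squeezing as $\alpha\to1$. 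In effect you reprove the special case of \cite{rouge}, lemme~5.2.6 that is needed, by the very technique the paper uses to prove Lemma~\ref{lem:isometrie2}; what this buys is independence from that reference, at the cost of some base-change bookkeeping that you only sketch but which is indeed routine: surjectivity and the quotient property of $\Mc(\As\ho_{k}k_{r})\to\Mc(\As)$ between compact Hausdorff spaces, the isometric embedding $\Hs(x)\hookrightarrow\Hs(y)$ for $y$ above $x$, and the identification $\Hs(y)\ho_{k_{r}}(k_{r}\ho_{k}B_{0})\simeq\Hs(y)\ho_{k}B_{0}$, which requires the associativity of the completed tensor product in addition to Lemma~\ref{lem:isometrie2}. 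Both proofs use Lemma~\ref{lem:isometrie2} at exactly the same spot, namely to see that the norm of~$\varphi$ is unchanged when $B$ is replaced by the smaller subspace~$B_{0}$.
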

\begin{proof}
Par d\'efinition, $\varphi$ est limite d'une suite d'\'el\'ements de~$\As\otimes_{k} B$. Dans chacun de ces \'el\'ements n'intervient qu'un nombre fini d'\'el\'ements de~$B$. Il existe donc un sous-$k$-espace de Banach~$B_{0}$ de~$B$, de type d\'enombrable sur~$k$, tel que $\varphi\in \As\ho_{k} B_{0}$. D'apr\`es le lemme~5.2.6 de~\cite{rouge}, la fonction $x\in \Mc(\As) \mapsto \|\varphi\|_{x,B_{0}}$ est continue. Mais, par le lemme~\ref{lem:isometrie2}, pour tout point~$x$ de~$\Mc(\As)$, nous avons $\|\varphi\|_{x,B_{0}}=\|\varphi\|_{x,B}$.
\end{proof}

\begin{cor}\label{cor:remonte}
Soient~$\As$ une $k$-alg\`ebre de Banach commutative et unitaire et~$K$ une extension valu\'ee compl\`ete (resp. universelle) du corps~$k$. Notons~$X=\Mc(\As)$. L'application {$\sigma_{K} : X_{u} \to X\hat{\otimes}_{k} K$} (resp. $\sigma_{K} : X \to X\hat{\otimes}_{k} K$) est continue.
\end{cor}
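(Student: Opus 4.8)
The statement to prove is Corollary~\ref{cor:remonte}: the map $\sigma_K$ is continuous, either on $X_u$ when $K$ is an arbitrary complete valued extension, or on all of $X$ when $K$ is universal. The plan is to reduce continuity of $\sigma_K$ to a statement about continuity of certain seminorm functions, and then invoke Lemma~\ref{lem:continuitenorme}. Recall that a point $\sigma_K(x)$ of $X\ho_k K = \Mc(\As\ho_k K)$ is the multiplicative seminorm on $\As\ho_k K$ obtained from the tensor-norm on $\Hs(x)\ho_k K$ via the map $\As\ho_k K \to \Hs(x)\ho_k K$; so what we must show is that for each fixed $\varphi\in\As\ho_k K$, the function $x\mapsto |\varphi(\sigma_K(x))|$ is continuous on $X_u$ (resp.\ on $X$).

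First I would unwind the definition of the topology on an analytic spectrum: a basis of neighbourhoods of a point is given by finite intersections of sets of the form $\{z : |\psi(z)| < c\}$ and $\{z : |\psi(z)| > c\}$ for $\psi$ ranging over the Banach algebra and $c$ a positive real. Hence continuity of $x\mapsto\sigma_K(x)$ amounts exactly to continuity of each composite $x\mapsto |\psi(\sigma_K(x))|$ for $\psi\in\As\ho_k K$. Now fix such a $\psi$. The key observation is that $|\psi(\sigma_K(x))|$ is precisely the image of $\psi$ under the seminorm on $\Hs(x)\ho_k K$ that is the multiplicative tensor-seminorm; but by definition of the tensor (semi)norm and the universal/peaked hypothesis this coincides — or rather is controlled by — the tensor-norm $\|\psi\|_{x,K}$ in the sense of Lemma~\ref{lem:continuitenorme}, taking $B = K$ as a $k$-Banach space. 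More carefully: when $x$ is universal (resp.\ $K$ is universal) the seminorm on $\Hs(x)\ho_k K$ is multiplicative and equals the tensor-norm, so $|\psi(\sigma_K(x))| = \|\psi\|_{x,K}$, and Lemma~\ref{lem:continuitenorme} applied with $B=K$ gives continuity of $x\mapsto\|\psi\|_{x,K}$ directly. That is the heart of the matter.

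So the steps, in order, are: (1) reduce to showing $x\mapsto|\psi(\sigma_K(x))|$ continuous for each $\psi\in\As\ho_k K$, using the description of the topology on the analytic spectrum; (2) identify, using the definition of universal point (resp.\ universal field) and the construction of $\sigma_K$, the value $|\psi(\sigma_K(x))|$ with the tensor-norm $\|\psi\|_{x,K}$ of the image of $\psi$ in $\Hs(x)\ho_k K$; (3) apply Lemma~\ref{lem:continuitenorme} with $B = K$ regarded as a $k$-Banach space to conclude that $x\mapsto\|\psi\|_{x,K}$ is continuous. One should be mildly careful in step~(1) that the preimage under $\As\ho_k K\to\Hs(x)\ho_k K$ of the element we feed to Lemma~\ref{lem:continuitenorme} is indeed $\psi$ itself (it is, by functoriality of the completed tensor product and the fact that the map $\As\to\Hs(x)$ is the quotient-then-completion map), and that "multiplicative seminorm equals tensor-norm" is exactly what "universellement multiplicative" provides.

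**Main obstacle.** I expect the only real subtlety to be step~(2): making precise the identification of the value of the seminorm $\sigma_K(x)$ on $\psi$ with the quantity $\|\psi\|_{x,K}$ to which Lemma~\ref{lem:continuitenorme} applies, and in particular checking that the hypothesis of that lemma (which asks for a $k$-Banach space $B$ and an element of $\As\ho_k B$) is met with $B=K$ and that the tensor-norm there is genuinely the norm defining $\sigma_K(x)$ — i.e.\ that under the universality hypothesis no passage to a quotient seminorm or to a spectral seminorm is hiding a discrepancy. Once that bookkeeping is done, the corollary is essentially immediate from Lemma~\ref{lem:continuitenorme}, which has already done the work of removing the "countable type" restriction present in Berkovich's original Lemma~5.2.6 via Lemma~\ref{lem:isometrie2}.
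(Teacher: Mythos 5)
Your argument is correct and is exactly the paper's intended one: the paper states the corollary without proof as an immediate consequence of Lemma~\ref{lem:continuitenorme}, since $|\varphi(\sigma_K(x))|$ is by construction the tensor norm $\|\varphi\|_{x,K}$ (multiplicative under the universality hypothesis) and the topology on $\Mc(\As\ho_k K)$ is the weak topology defined by the functions $|\varphi(\cdot)|$. Your step-by-step unwinding, including taking $B=K$ in the lemma, matches the paper's route precisely.
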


\begin{rem}
Une autre possibilit\'e pour d\'efinir l'universalit\'e, peut-\^etre plus naturelle, consisterait \`a demander qu'un point soit universel non pas lorsque sa norme est universellement multiplicative, mais lorsque son rayon spectral l'est. Nous ne sommes malheureusement pas parvenu \`a obtenir un analogue du corollaire~\ref{cor:remonte} dans ce cadre.

Il faut donc prendre garde au fait qu'il ne suffit pas qu'un point se rel\`eve de fa\c{c}on canonique dans toute extension des scalaires pour qu'il soit universel. Consid\'erons, par exemple, un point~$x$ dont le corps r\'esiduel~$\Hs(x)$ est une extension purement ins\'eparable du corps~$k$. Bien qu'il se rel\`eve canoniquement dans toute extension des scalaires, il n'est pas universel : l'anneau $\Hs(x)\ho_{k} \Hs(x)$ n'\'etant pas m\^eme r\'eduit, il ne saurait \^etre muni d'une norme multiplicative.
\end{rem}

Nous allons maintenant chercher des crit\`eres permettant d'assurer qu'un point est universel. Dans le r\'esultat qui suit, nous \'etendons, dans certains cas, le r\'esultat de la proposition~5.2.5 de~\cite{rouge}. Signalons que les r\'eductions dont il est question ici sont les r\'eductions gradu\'ees au sens de M.~Temkin (\emph{cf.} section~\ref{section:alggrad}).

\begin{prop}\label{prop:Shilovuniversel}
Soit~$\As$ une alg\`ebre $k$-affino\"{\i}de. Supposons que le corps~$k$ est stable et que $\rho(\As) \cap \sqrt{|k^*|} \subset |k^*|$. Supposons que la r\'eduction gradu\'ee~$\ti{X}$ de l'espace $k$-affino\"{\i}de $X=\Mc(\As)$ est g\'eom\'etriquement int\`egre (au sens o\`u pour toute extension gradu\'ee~$\ti{K}$ de~$\ti{k}$, l'anneau gradu\'e $\ti{\As}\otimes_{\ti{k}}\ti{K}$ est int\`egre). Alors le bord de Shilov de~$X$ est un singleton et son unique point est universel.
\end{prop}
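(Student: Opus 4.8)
The plan is to mimic the classical argument of Berkovich (\cite{rouge}, prop.~5.2.5), replacing the classical reduction by Temkin's graded reduction, and to feed in the graded Nullstellensatz and the geometric integrality result (Theorem~\ref{thm:geointegre}) proved in the previous section. First I would recall that, since $k$ is stable, the spectral seminorm $\rho$ on $\As$ is multiplicative on the Shilov boundary and that the graded reduction $\ti{X} = \operatorname{Spec}^{gr}(\ti{\As})$ carries the information of the Shilov boundary: the points of the Shilov boundary of $X$ correspond to the generic points of the irreducible components of $\ti{X}$ (this is one of the main results of \cite{TemkinII}, \S3, and uses the hypothesis $\rho(\As)\cap\sqrt{|k^*|}\subset|k^*|$ so that the reduction behaves well). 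Since $\ti{\As}$ is assumed integral, $\ti{X}$ is irreducible and hence has a single generic point; this already gives that the Shilov boundary of $X$ is a singleton $\{x\}$, and that $|.|_x = \rho$ on $\As$.

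Next I would establish universality of $x$. Let $K$ be an arbitrary complete valued extension of $k$; we must show $\Hs(x)\ho_k K$ carries a multiplicative norm, equivalently that its spectral seminorm is a norm which is multiplicative, equivalently (passing to the affinoid picture) that $\As\ho_k K$ has a single point in its Shilov boundary lying over $x$ and that there the spectral seminorm is multiplicative. The key point is that graded reduction commutes with the relevant scalar extensions: one has $\ti{(\As\ho_k K)} \cong \ti{\As}\otimes_{\ti{k}}\ti{K}$ (again by Temkin, using stability of $k$ and the condition on $\rho(\As)$ — this is exactly where those hypotheses are needed). By Theorem~\ref{thm:geointegre}, since $\ti{k}$ is a graded field — wait, $\ti{k}$ need not be algebraically closed in general, so I must be slightly more careful: I would instead reduce to that case. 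The hypothesis is that $\ti{X}$ is \emph{geometrically} integral, i.e.\ $\ti{\As}\otimes_{\ti{k}}\ti{K}$ is integral for every graded extension $\ti{K}$ of $\ti{k}$; this is precisely the hypothesis stated, so Theorem~\ref{thm:geointegre} is not even needed here — it will be used later to \emph{verify} this hypothesis in applications. So $\ti{(\As\ho_k K)}$ is integral, hence the $K$-affinoid space $X_K = \Mc(\As\ho_k K)$ has irreducible graded reduction, hence a single Shilov point, and the spectral seminorm of $\As\ho_k K$ is multiplicative. This single Shilov point lies over $x$ (it maps to the unique Shilov point of $X$, which is $x$), and restricting the multiplicative spectral seminorm of $\As\ho_k K$ through $\Hs(x)\ho_k K \to \As\ho_k K$ — an isometry onto its image, by Lemma~\ref{lem:isometrie2} applied to $\Hs(\p_x)\hookrightarrow$ appropriate localizations, or more directly because $\Hs(x)$ is the completed residue field — shows that $\Hs(x)\ho_k K$ is endowed with a multiplicative norm. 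Since $K$ was arbitrary, $x$ is universal.

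The main obstacle I expect is the bookkeeping around the compatibility $\ti{(\As\ho_k K)} \cong \ti{\As}\otimes_{\ti{k}}\ti{K}$ and the identification of the Shilov boundary with the generic points of the graded reduction in the \emph{non-strict} setting: the classical statements in \cite{BGR} are for strictly affinoid algebras, and one must cite or adapt the graded versions from \cite{TemkinII}, keeping careful track of exactly where \emph{stability} of $k$ and the hypothesis $\rho(\As)\cap\sqrt{|k^*|}\subset|k^*|$ enter (the latter ensures $\ti{\As}$ is generated in the right degrees and that no spurious components appear after reduction). A secondary, more routine point is checking that the norm on $\Hs(x)\ho_k K$ induced as a subspace of $\As\ho_k K$ agrees with its own tensor/spectral norm — this is where Lemma~\ref{lem:isometrie2} and the continuity of Lemma~\ref{lem:continuitenorme} are designed to be invoked — and that multiplicativity descends along the inclusion, which follows since a sub-$k$-Banach-algebra of an algebra with multiplicative spectral seminorm inherits one.
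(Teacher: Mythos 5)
Your reduction of the first assertion (unique Shilov point of $X$ via the integrality of $\ti{\As}$ and Temkin's results) matches the paper. But the heart of your argument for universality rests on the claimed compatibility $\widetilde{\As\ho_{k}K}\cong\ti{\As}\otimes_{\ti{k}}\ti{K}$ for an \emph{arbitrary} complete extension $K$, attributed to Temkin ``using stability of $k$ and the condition on $\rho(\As)$''. No such statement is available, and it is in fact the crux of the whole difficulty: the spectral (or tensor) seminorm of $\As\ho_{k}K$ may a priori be badly related to the tensor product of the reductions, and in the strictly affinoid case the compatibility is exactly the theory of distinguished algebras over stable fields (Bosch, Grauert--Remmert, \cite{BGR}~6.4.3) together with the delicate Lemma~5.2.2 of \cite{rouge} (whose proof needs the correction of \cite{excellence}, lemme~1.8). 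In the non-strict setting no such theory exists in the literature; the paper's remark following the proposition says precisely this, and that is why the paper does \emph{not} argue as you do: it first extends scalars to $k_{\br}$ for a well-chosen $k$-free polyray so that $\As\ho_{k}k_{\br}$ becomes strictly $k_{\br}$-affinoid with $\rho(\As\ho_{k}k_{\br})=|k_{\br}|$ (this is where $\rho(\As)\cap\sqrt{|k^*|}\subset|k^*|$ is used), notes that $k_{\br}$ is again stable and that the algebra is distinguished, applies Berkovich's proposition~5.2.5 there (geometric integrality of $\ti{X}$ supplying its hypotheses), and descends via Lemma~\ref{lem:isometrie2} together with the isometric isomorphism $\Hs(\gamma)\ho_{k}k_{\br}\to\Hs(\gamma_{\br})$.

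A second, related gap: universality is defined as multiplicativity of the \emph{tensor norm} on $\Hs(x)\ho_{k}K$, not of the spectral seminorm of $\As\ho_{k}K$. Your final step ``restricting the multiplicative spectral seminorm of $\As\ho_{k}K$ through $\Hs(x)\ho_{k}K\to\As\ho_{k}K$'' uses a map going in the wrong direction (there is only a morphism $\As\ho_{k}K\to\Hs(x)\ho_{k}K$), and knowing that $X_{K}$ has a unique Shilov point over $x$ with multiplicative spectral seminorm does not by itself give multiplicativity of the tensor norm on $\Hs(x)\ho_{k}K$ (the paper's own remark about the alternative ``spectral'' definition of universality shows the two are not interchangeable). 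In the paper this passage is exactly what the isometry $\Hs(\gamma)\ho_{k}k_{\br}\simeq\Hs(\gamma_{\br})$, Berkovich's result over $k_{\br}$, and Lemma~\ref{lem:isometrie2} applied to $K\subset K_{\br}$ accomplish. So the proposal correctly identifies the ingredients' roles in spirit, but the two steps it treats as citable bookkeeping are precisely the substance of the proof, and as written the argument does not go through.
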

\begin{proof}
Nous pouvons supposer que l'alg\`ebre~$\As$ est r\'eduite. Puisque l'alg\`ebre gradu\'ee~$\ti{\As}$ est int\`egre, d'apr\`es~\cite{TemkinII}, proposition~3.3, le bord de Shilov de~$X$ est r\'eduit \`a un point, que nous noterons~$\gamma$. Soit~$K$ une extension valu\'ee compl\`ete du corps~$k$. Nous souhaitons montrer que la norme tensorielle sur l'alg\`ebre~$\Hs(\gamma)\ho_{k} K$ est multiplicative. 

Les hypoth\`eses assurent qu'il existe un polyrayon $k$-libre~$\br$ tel que l'alg\`ebre~$\As\ho_{k} k_{\br}$ soit strictement $k_{\br}$-affino\"{\i}de et que $\rho(\As\ho_{k} k_{\br}) = |k_{\br}|$. 

Puisque le corps~$k$ est stable, le corps~$k_{\br}$ l'est aussi (\emph{cf.}~\cite{stablemodification}, corollary~6.3.6 ou~\cite{RSS}, th\'eor\`eme~3.3.16). L'alg\`ebre~$\As\ho_{k} k_{\br}$ est r\'eduite et le th\'eor\`eme~6.4.3/1 de~\cite{BGR} assure qu'elle est distingu\'ee. Notons~$\gamma_{\br}$ l'unique point du bord de Shilov de~$X\ho_{k} k_{\br}$. En utilisant le fait que l'\'el\'ement~$r$ de~$\R_{+}^*$ n'appartient pas \`a $\sqrt{|\Hs(\gamma)^*|}=\sqrt{\rho(\As)^*}$, on montre que le morphisme naturel $\Hs(\gamma)\ho_{k} k_{\br} \to \Hs(\gamma_{\br})$ est un isomorphisme isom\'etrique.

D'apr\`es la proposition~5.2.5 de~\cite{rouge} (noter la correction apport\'ee \`a sa preuve, ou plut\^ot \`a celle du lemme~5.2.2 sur lequel elle repose, par le lemme~1.8 de~\cite{excellence}), le point~$\gamma_{\br}$ est universel et la norme tensorielle sur l'alg\`ebre $\Hs(\gamma_{\br}) \ho_{k_{\br}} K_{\br}$ est donc multiplicative. Or $\Hs(\gamma_{\br}) \ho_{k_{\br}} K_{\br} = (\Hs(\gamma)\ho_{k} K)\ho_{K} K_{\br}$ et le lemme pr\'ec\'edent utilis\'e avec $K \subset K_{\br}$ permet de conclure.
\end{proof}

\begin{rem}
Le recours \`a un changement de base dans la preuve pr\'e\-c\'e\-dente peut sembler artificiel. Une m\'ethode plus naturelle consisterait \`a utiliser une notion plus g\'en\'erale d'alg\`ebre distingu\'ee, autorisant des alg\`ebres de Tate~$k\{\br^{-1}\bT\}$ avec des rayons arbitraires, et \`a prouver sur celle-ci les r\'esultats classiques de~\cite{Orthonormalbasen}. Signalons que des probl\`emes li\'es \`a cette question apparaissent d\'ej\`a dans le cas strictement affino\"{\i}de puisque, pour $r \in \sqrt{|k^*|} \setminus |k^*|$, l'alg\`ebre $k\{r^{-1}T\}$ n'est pas distingu\'ee, mais le devient apr\`es extension des scalaires au corps~$k_{r}$, le compl\'et\'e du corps des fractions de $k\{r^{-1}T\}$.
\end{rem}

\begin{cor}\label{cor:Shilovuniverselalgclos}
Soit~$X$ un espace $k$-affino\"{\i}de. Supposons que le corps~$k$ est alg\'ebriquement clos. Alors tout point du bord de Shilov de~$X$ est universel.
\end{cor}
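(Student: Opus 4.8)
The plan is to deduce this corollary from Proposition \ref{prop:Shilovuniversel} by checking, point by point, that all its hypotheses are automatically satisfied when the base field $k$ is algebraically closed. First I would recall that an algebraically closed complete valued field is stable (this is classical; see \cite{BGR} or \cite{stablemodification}), so the stability hypothesis is free. Next, the condition $\rho(\As) \cap \sqrt{|k^*|} \subset |k^*|$ also comes for free: when $k$ is algebraically closed, $|k^*|$ is already divisible, hence $\sqrt{|k^*|} = |k^*|$, so the inclusion is trivially true for \emph{any} affinoid algebra $\As$.

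The remaining — and genuinely substantive — point is the geometric integrality of the graded reduction $\ti{X}$. Here I would first reduce to the case where $X$ is \emph{irreducible}: a general affinoid space decomposes into finitely many irreducible components, each containing part of the Shilov boundary, and the Shilov boundary of $X$ is the union of those of the components, so it suffices to treat a single irreducible component (equivalently, one may assume $\As$ is a graded-reduced domain after the usual preliminary reductions, noting that the question is really about the points of the Shilov boundary of $X=\Mc(\As)$, each of which lies on some irreducible piece). For such an $X$, Temkin's theory (\cite{TemkinII}, \S 3) relates the irreducible components of $\ti{X}$ to the Shilov boundary; the key input is that when $\As$ is a $k$-affinoid domain, the graded reduction $\ti{\As}$ of a suitable local piece is a \emph{graded domain} — this is where one invokes, via the graded Nullstellensatz developed in Section \ref{section:alggrad}, that $\ti{\As}$ is a finitely generated $\ti{k}$-graded algebra, and then applies Theorem \ref{thm:geointegre}: since $\ti{k}$ is algebraically closed (being the reduction of an algebraically closed complete valued field, by the proposition in Section \ref{section:alggrad}), any integral finitely generated $\ti{k}$-graded algebra stays integral after every graded base extension. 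That is precisely geometric integrality in the sense required by Proposition \ref{prop:Shilovuniversel}.

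With these three verifications in hand, Proposition \ref{prop:Shilovuniversel} applies to each irreducible component and yields that its Shilov boundary is a single universal point; gluing back, every point of the Shilov boundary of $X$ is universal. The main obstacle I anticipate is not any single hard estimate but rather the careful bookkeeping needed to pass from "$\As$ a $k$-affinoid domain" to "$\ti{\As}$ a graded domain to which Theorem \ref{thm:geointegre} applies": one must handle the reduction to the graded-reduced, irreducible situation cleanly and make sure the graded reduction of the relevant affinoid piece is both finitely generated over $\ti{k}$ and integral, so that the graded Nullstellensatz and Theorem \ref{thm:geointegre} can be brought to bear. Once that structural point is settled, the rest is a direct appeal to the previous proposition.
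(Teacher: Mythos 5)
Votre v\'erification des hypoth\`eses portant sur le corps (stabilit\'e de~$k$ alg\'ebriquement clos, divisibilit\'e de~$|k^*|$) co\"{\i}ncide avec celle du texte. En revanche, votre traitement de l'hypoth\`ese d'int\'egrit\'e g\'eom\'etrique comporte une lacune r\'eelle : la r\'eduction au cas o\`u~$X$ est irr\'eductible ne suffit pas. L'int\'egrit\'e de l'alg\`ebre affino\"{\i}de~$\As$ n'entra\^{\i}ne pas celle de sa r\'eduction gradu\'ee~$\ti{\As}$ : la couronne $\{r\le |T|\le 1\}$ avec $r<1$, $r \in |k^*|$, est irr\'eductible, mais sa r\'eduction $\ti{k}[\ti{T},\ti{S}]/(\ti{T}\ti{S})$ poss\`ede deux composantes et son bord de Shilov deux points. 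Or la proposition~\ref{prop:Shilovuniversel} exige pr\'ecis\'ement que~$\ti{X}$ soit (g\'eom\'etriquement) int\`egre et conclut que le bord de Shilov est un singleton ; elle ne peut donc pas s'appliquer telle quelle \`a une composante irr\'eductible dont le bord de Shilov compte plusieurs points. Votre \og suitable local piece \fg\ est exactement l'ingr\'edient manquant, et vous ne le construisez pas.

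La construction absente est la suivante : pour un point~$\gamma$ du bord de Shilov de $X=\Mc(\As)$, sa r\'eduction~$\ti{\gamma}$ est un point g\'en\'erique de~$\ti{X}$ (\cite{TemkinII}, proposition~3.3) ; on choisit $\ti{f}\in\ti{\As}$ s'annulant en tous les points g\'en\'eriques de~$\ti{X}$ sauf~$\ti{\gamma}$, on le rel\`eve en $f\in\As$, on pose $r=\rho(f)$ et l'on passe au domaine affino\"{\i}de~$Y$ dont la r\'eduction gradu\'ee est la localisation~$\ti{\As}_{\ti{f}}$ (\cite{TemkinII}, proposition~3.1). Cette r\'eduction est int\`egre, donc g\'eom\'etriquement int\`egre par le th\'eor\`eme~\ref{thm:geointegre}, le bord de Shilov de~$Y$ est r\'eduit \`a~$\{\gamma\}$, et la proposition~\ref{prop:Shilovuniversel} s'applique \`a~$Y$ ; comme l'universalit\'e ne d\'epend que de~$\Hs(\gamma)$, le point~$\gamma$ est universel dans~$X$. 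Signalons aussi une attribution inexacte : le fait que~$\ti{\As}$ soit une $\ti{k}$-alg\`ebre gradu\'ee de type fini rel\`eve de la th\'eorie de la r\'eduction gradu\'ee de Temkin, non du Nullstellensatz gradu\'e, lequel n'intervient qu'\`a l'int\'erieur de la preuve du th\'eor\`eme~\ref{thm:geointegre}.
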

\begin{proof}
Soit~$\gamma$ un point du bord de Shilov de~$X=\Mc(\As)$. D'apr\`es~\cite{TemkinII}, proposition~3.3, sa r\'eduction gradu\'ee~$\ti{\gamma}$ est un point g\'en\'erique de~$\ti{X}$. Choisissons un \'el\'ement~$\ti{f}$ de~$\ti{\As}$ qui s'annule en tous les points g\'en\'eriques de la r\'eduction gradu\'ee~$\ti{X}$ \`a l'exception de~$\ti{\gamma}$ et relevons-le en un \'el\'ement~$f$ de~$\As$. Posons~$r=\rho(f)$. D'apr\`es~\cite{TemkinII}, proposition~3.1, la r\'eduction gradu\'ee de l'alg\`ebre affino\"{\i}de~$\As\{r^{-1}f\}$ est \'egale \`a~$\ti{A}_{\ti{f}}$ et le bord de Shilov de son spectre~$Y$ est donc r\'eduit au point~$\gamma$. 

Nous pouvons maintenant conclure en appliquant le r\'esultat de la proposition~\ref{prop:Shilovuniversel} \`a l'espace~$Y$. Les hypoth\`eses sur le corps~$k$ sont satisfaites : \'etant alg\'ebriquement clos, il est stable et son groupe des valeurs est divisible. Celle sur~$Y$ l'est \'egalement puisque sa r\'eduction gradu\'ee~$\ti{Y}=\ti{X}_{\ti{f}}$ est int\`egre et donc g\'eom\'etriquement int\`egre, d'apr\`es le th\'eor\`eme~\ref{thm:geointegre}.
\end{proof}

Il nous sera \'egalement possible d'obtenir des points universels comme limites de familles de points universels.

\begin{prop}\label{prop:impliqueuniversel}
Soit~$\As$ une $k$-alg\`ebre de Banach commutative et unitaire. L'ensemble des points universels de~$\Mc(\As)$ est ferm\'e.
\end{prop}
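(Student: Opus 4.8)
The plan is to fix an arbitrary complete valued extension $K$ of $k$ and prove that, whenever $x$ lies in the closure of the set $X_{u}$ of universal points of $X=\Mc(\As)$, the tensor seminorm $\nu:=\|\cdot\|_{x,K}$ on $\Hs(x)\ho_{k}K$ is multiplicative. Since $K$ is arbitrary this means $\Hs(x)$ is universally multiplicative, i.e. $x\in X_{u}$, which is exactly the assertion that $X_{u}$ is closed.

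The first step uses continuity. By Lemma~\ref{lem:continuitenorme} with $B=K$, for every $\varphi\in\As\ho_{k}K$ the function $y\mapsto\|\varphi\|_{y,K}$ is continuous on $X$. Fix $a,b\in\As\otimes_{k}K$; then $y\mapsto\|a\|_{y,K}$, $y\mapsto\|b\|_{y,K}$ and $y\mapsto\|ab\|_{y,K}$ are continuous, and at every point $y\in X_{u}$ the norm of $\Hs(y)\ho_{k}K$ is multiplicative, so, the image of $ab$ being the product of the images of $a$ and $b$, one has $\|ab\|_{y,K}=\|a\|_{y,K}\,\|b\|_{y,K}$. Hence the continuous function $y\mapsto\|ab\|_{y,K}-\|a\|_{y,K}\|b\|_{y,K}$ vanishes on $X_{u}$, therefore on $\overline{X_{u}}$, in particular at $x$. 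In other words $\nu$ is multiplicative on the image $R$ of $\As\otimes_{k}K$ in $\Hs(x)\ho_{k}K$.

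The main obstacle is that $R$ need not be dense in $\Hs(x)\ho_{k}K$: what is dense is the image of $k(\p_{x})\otimes_{k}K$, since $k(\p_{x})$ is dense in $\Hs(x)$, and passing from $\As$ to its fraction field $k(\p_{x})$ means inverting denominators. This is handled as follows. By Lemma~\ref{lem:isometrie2}, the isometric inclusion $k\hookrightarrow\Hs(x)$ induces an isometry $K\simeq k\ho_{k}K\hookrightarrow\Hs(x)\ho_{k}K$, so $\nu(1\otimes1)=1$; for $h\in\Hs(x)^{*}$ one has $\nu(h\otimes1)\le|h|_{x}$ and $\nu(h^{-1}\otimes1)\le|h|_{x}^{-1}$, and since their product is at least $\nu(1\otimes1)=1$ by submultiplicativity, both are equalities. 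Consequently, for $s\in\As\setminus\p_{x}$ the element $\bar s:=s\otimes1\in R$ is a unit of $\Hs(x)\ho_{k}K$ with $\nu(\bar s^{-1})=\nu(\bar s)^{-1}$, and a two–line estimate with submultiplicativity gives $\nu(\bar s^{-1}u)=\nu(\bar s)^{-1}\nu(u)$ for all $u\in R$. Combining this with the multiplicativity of $\nu$ on $R$ and with the fact that $\p_{x}$ is prime (so $st\notin\p_{x}$ for $s,t\notin\p_{x}$), one checks that $\nu$ is multiplicative on $\bigcup_{s\notin\p_{x}}\bar s^{-1}R$, which is precisely the image of $k(\p_{x})\otimes_{k}K$. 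Since this subalgebra is dense and $\nu$ is a continuous submultiplicative seminorm, multiplicativity propagates to all of $\Hs(x)\ho_{k}K$, and the proof is complete.
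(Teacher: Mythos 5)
Your proof is correct and follows essentially the same route as the paper's: continuity of $y\mapsto\|\varphi\|_{y,K}$ (lemme~\ref{lem:continuitenorme}) yields multiplicativity at a limit of universal points on the image of $\As\otimes_{k}K$, then one passes to $k(\p_{x})\otimes_{k}K$ by clearing denominators and concludes by density in $\Hs(x)\ho_{k}K$. The only cosmetic differences are that the paper argues with converging nets rather than the closed zero-set of a continuous function, and justifies the denominator step via the homogeneity $\|\lambda f\|=|\lambda|\,\|f\|$ of the tensor seminorm over the valued field $\Hs(x)$ instead of your unit-norm argument based on $\nu(1\otimes 1)=1$ and submultiplicativity.
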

\begin{proof}
Soit~$\gamma$ un point de~$X=\Mc(\As)$. Supposons qu'il existe un ensemble ordonn\'e filtrant~$(I,<)$ et une suite g\'en\'eralis\'ee~$(\gamma_{i})_{i\in I}$ de points universels de~$X$ qui converge vers~$\gamma$. 

Soit~$K$ une extension valu\'ee compl\`ete du corps~$k$. Nous souhaitons montrer que la norme tensorielle sur $\Hs(\gamma)\ho_{k} K$ est multiplicative. D'apr\`es le lemme~\ref{lem:isometrie2}, il suffit  de montrer que la norme tensorielle sur $k(\p_{\gamma})\otimes_{k} K$, est multiplicative. Nous noterons~$\|.\|_{\gamma,K}$ cette norme. Nous d\'efinissons de m\^eme une norme~$\|.\|_{\gamma_{i},K}$, pour tout~$i$.

D'apr\`es le lemme~\ref{lem:continuitenorme}, pour tout \'el\'ement~$\varphi$ de~$\As\ho_{k} K$, la suite g\'en\'eralis\'ee~$\|\varphi\|_{\gamma_{i},K}$ tend vers~$\|\varphi\|_{\gamma,K}$. Puisque les normes~$\|.\|_{\gamma_{i},K}$ sont multiplicatives, la norme~$\|.\|_{\gamma,K}$ est multiplicative sur~$\As\ho_{k} K$.

Soient~$f_{1}$ et~$f_{2}$ deux \'el\'ements de~$k(\p_{\gamma})\otimes_{k} K$. Il existe des \'el\'ements~$g_{1}$ et~$g_{2}$ de~$\As\otimes_{k} K$ et des \'el\'ements~$u_{1}$ et~$u_{2}$ de~$\As$ ne s'annulant pas en~$\gamma$ tels que 
\[f_{1} = u_{1}^{-1}(\gamma) g_{1} \textrm{ et } f_{2} = u_{2}^{-1}(\gamma) g_{2} \textrm{ dans } k(\p_{\gamma})\otimes_{k} K.\]
Nous avons donc
\[\renewcommand{\arraystretch}{1.3}{\begin{array}{rcl}
\|f_{1}f_{2}\|_{\gamma,K} &=& \|u_{1}^{-1}(\gamma)u_{2}^{-1}(\gamma)g_{1}g_{2}\|_{\gamma,K}\\
&=& |u_{1}^{-1}(\gamma)u_{2}^{-1}(\gamma)|\, \|g_{1}g_{2}\|_{\gamma,K}\\
&=& |u_{1}^{-1}(\gamma)|\, |u_{2}^{-1}(\gamma)|\, \|g_{1}\|_{\gamma,K}\, \|g_{2}\|_{\gamma,K}\\
&=& \|f_{1}\|_{\gamma,K}\, \|f_{2}\|_{\gamma,K}.
\end{array}}\]
Le r\'esultat s'ensuit.
\end{proof}

\begin{rem}
Les notions et r\'esultats de cette section s'\'etendent sans peine du cas des spectres de $k$-alg\`ebres de Banach commutatives et unitaires \`a celui des espaces $k$-analytiques. Ainsi, un point~$x$ de~$X$ sera-t-il dit universel sur~$k$ si son corps r\'esiduel compl\'et\'e~$\Hs(x)$ l'est. Quant au corollaire~\ref{cor:remonte} et \`a la proposition~\ref{prop:impliqueuniversel}, ils se g\'en\'eralisent ais\'ement, car les r\'esultats pour l'espace~$X$ d\'ecoulent des m\^emes r\'esultats pour ses domaines affino\"{\i}des.
\end{rem}

\begin{cor}\label{cor:cime}
Tout point d'un espace analytique sur un corps alg\'ebriquement clos est universel.
\end{cor}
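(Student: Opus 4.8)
The statement to be proved is Corollary~\ref{cor:cime}: over an algebraically closed complete valued field~$k$, every point of every $k$-analytic space is universal. The plan is to reduce this to statements already established in the excerpt, essentially by a local-to-global argument on the space combined with the Shilov-boundary criterion of Corollary~\ref{cor:Shilovuniverselalgclos} and the closedness result of Proposition~\ref{prop:impliqueuniversel}.

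First I would reduce to the affinoid case: a $k$-analytic space~$X$ is, by construction, covered by affinoid domains, and (as noted in the remark following Proposition~\ref{prop:impliqueuniversel}) universality of a point~$x$ depends only on the complete residue field~$\Hs(x)$, hence only on any affinoid neighbourhood of~$x$. So it suffices to treat $X = \Mc(\As)$ with~$\As$ a $k$-affinoid algebra. Next I would reduce, within an affinoid space, to the Shilov boundary. Given an arbitrary point~$x \in X$, the idea is to realise~$x$ as a limit of Shilov boundary points of smaller and smaller affinoid subdomains: for instance, choosing functions~$f_1,\dots,f_m$ on~$\As$ and forming the Laurent-type domains $\{ y : |f_i(y) - f_i(x)| \le \varepsilon\}$ (or rather the affinoid domains cut out by $\rho(f_i - f_i(x)) \le \varepsilon$ together with lower bounds), the point~$x$ lies in the Shilov boundary of each, and as~$\varepsilon \to 0$ and~$m \to \infty$ these Shilov points converge to~$x$ — this is exactly the mechanism sketched in the introduction for the Gauss point of a disc, but now available in general. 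Each such Shilov point is universal by Corollary~\ref{cor:Shilovuniverselalgclos}, since~$k$ is algebraically closed.

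Having produced a net of universal points converging to~$x$, I would invoke Proposition~\ref{prop:impliqueuniversel}: the set~$X_u$ of universal points is closed, so~$x \in X_u$. This closes the argument.

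The main obstacle, and the step requiring the most care, is the construction in the middle paragraph: showing that every point of an arbitrary $k$-affinoid space is a limit of Shilov-boundary points of affinoid subdomains. One must check that the auxiliary affinoid domains are non-empty and that~$x$ genuinely lies in their Shilov boundary — the latter following because the semi-norm~$|.|_x$ is maximal among bounded multiplicative semi-norms on the smaller algebra relative to the constraints imposed — and that the corresponding points converge to~$x$ in the sense of the spectral (weak) topology, i.e.\ that the net eventually enters every neighbourhood of~$x$, which neighbourhoods are generated precisely by conditions of the form $|g(y) - g(x)| < \delta$. The remaining steps (the two reductions and the appeal to closedness) are essentially formal once this is in place; one could alternatively bypass part of the construction by observing that~$X_u$ already contains the whole Shilov boundary of every affinoid subdomain and using that these boundaries are, in a suitable sense, dense — but some version of the limiting argument seems unavoidable.
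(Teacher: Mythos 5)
Your overall strategy coincides with the paper's: over an algebraically closed field every Shilov boundary point of an affinoid domain is universal (corollaire~\ref{cor:Shilovuniverselalgclos}), such points are dense in any analytic space, and the set of universal points is closed (proposition~\ref{prop:impliqueuniversel}, together with the remark extending it from spectra of Banach algebras to analytic spaces); density plus closedness gives the corollary. This is exactly the paper's two-line proof, so your reductions and your final appeal to closedness are the right ones.

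One intermediate claim, which you single out as the delicate step, is however false as stated: the point $x$ does \emph{not} in general lie in the Shilov boundary of the Laurent-type domains you build around it, and the semi-norm $|.|_{x}$ is not maximal on their algebras. Take for $x$ a rigid point $a$ of the unit disc and $f=T-a$: the Shilov boundary of $\{\,|T-a|\le\varepsilon\,\}$ is the single point $\eta_{a,\varepsilon}$, not $x$; more generally points of type $1$ or $4$ never belong to any Shilov boundary, which is precisely why a limiting argument is needed in the first place. Fortunately this claim is superfluous: all that is required is that every neighbourhood of $x$ contains a nonempty affinoid domain (your shrinking Laurent domains do contain $x$, hence are nonempty), whose Shilov boundary is nonempty and consists of universal points by corollaire~\ref{cor:Shilovuniverselalgclos}; the resulting net of universal points converges to $x$ because the domains are cofinal in the neighbourhood filter, and proposition~\ref{prop:impliqueuniversel} concludes. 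A minor further caveat: $f_{i}(x)$ lives in $\Hs(x)$ and need not lie in $k$, so expressions such as $f_{i}-f_{i}(x)$ or $\rho(f_{i}-f_{i}(x))$ do not define elements of the affinoid algebra in general; the defining inequalities must bear on $|f_{i}(y)|$ compared with the real number $|f_{i}(x)|$, as you in fact write in the first formulation.
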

\begin{proof}
Il suffit de d\'emontrer que l'ensemble des points universels est dense. Cela d\'ecoule du corollaire~\ref{cor:Shilovuniverselalgclos} appliqu\'e aux domaines affino\"{\i}des de l'espace consid\'er\'e.
\end{proof}

\begin{rem}
Si l'on ne s'int\'eresse qu'aux espaces analytiques sur un corps de valuation non triviale, il est possible de d\'emontrer ce r\'esultat en utilisant uniquement les r\'eductions classiques. En effet, on se ram\`ene d'abord au cas des espaces affino\"{\i}des, puis \`a celui des disques (car les espaces affino\"{\i}des en sont des ferm\'es de Zariski) et \`a celui des espaces affines. Dans ces derniers, tout point poss\`ede un syst\`eme fondamental de voisinages form\'e de domaines strictement $k$-affino\"{\i}des, ce qui permet de conclure.
\end{rem}

\begin{cor}
Soient~$X$ un espace $k$-analytique et~$x$ un point de~$X$. Notons~$F$ le compl\'et\'e d'une cl\^oture alg\'ebrique de~$k$. Les propri\'et\'es suivantes sont \'equivalentes :
\begin{enumerate}[\it i)]
\item le point~$x$ est universel\ ;
\item la norme tensorielle sur $\Hs(x)\ho_{k} F$ est multiplicative.
\end{enumerate}
\end{cor}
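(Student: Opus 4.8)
The plan is to show the two implications separately. The implication $i)\Rightarrow ii)$ is trivial: if $x$ is universal then by definition the tensor norm on $\Hs(x)\ho_{k}K$ is multiplicative for every complete valued extension $K$ of $k$, and $F$ is such an extension.

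For $ii)\Rightarrow i)$, suppose the tensor norm on $\Hs(x)\ho_{k}F$ is multiplicative, and let $K$ be an arbitrary complete valued extension of $k$; we want the tensor norm on $\Hs(x)\ho_{k}K$ to be multiplicative. The key point is that $F$, being the completion of an algebraic closure of $k$, is algebraically closed, so by Corollary~\ref{cor:cime} every point of every $F$-analytic space is universal over $F$; in particular $F$ itself is a universal extension of $k$. Now choose a complete valued field $L$ containing (isometric copies of) both $F$ and $K$ over $k$ — for instance a completed composite, or the completion of $\Hs(y)$ for a suitable point $y$ of $(\Mc(F\ho_{k}K))$, whichever is most convenient. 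Since $x$ is universal over $F$ — which is exactly the hypothesis $ii)$, as $F$ is algebraically closed one could even invoke Corollary~\ref{cor:cime} directly once we know $x$ makes sense as a point over $F$, but in any case hypothesis $ii)$ gives universality of the canonical point $\sigma_{F/k}(x)$ — the tensor norm on $\Hs(\sigma_{F/k}(x))\ho_{F}L$ is multiplicative.

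It then remains to propagate this back down to $\Hs(x)\ho_{k}K$. Using the identifications $\Hs(\sigma_{F/k}(x)) \simeq \Hs(x)\ho_{k}F$ (isometrically, by construction of the canonical lift) and the associativity of completed tensor products, we get
\[
\Hs(\sigma_{F/k}(x))\ho_{F}L \;\simeq\; (\Hs(x)\ho_{k}F)\ho_{F}L \;\simeq\; \Hs(x)\ho_{k}L \;\simeq\; (\Hs(x)\ho_{k}K)\ho_{K}L,
\]
so the tensor norm on $(\Hs(x)\ho_{k}K)\ho_{K}L$ is multiplicative. Finally, since $K\to L$ is an isometric embedding of valued fields, Lemma~\ref{lem:isometrie2} (applied with $A=K$, $B=L$, $C=\Hs(x)\ho_{k}K$, or rather its algebra form) shows that the canonical map $\Hs(x)\ho_{k}K \to (\Hs(x)\ho_{k}K)\ho_{K}L$ is isometric; a subalgebra of a normed ring with multiplicative norm inherits a multiplicative norm, hence the tensor norm on $\Hs(x)\ho_{k}K$ is multiplicative, and $x$ is universal.

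The main obstacle is the construction of the auxiliary field $L$ containing both $F$ and $K$ isometrically over $k$, together with checking that the completed-tensor-product identifications above are genuinely isometric and not merely topological — this is where one must be careful, invoking Lemma~\ref{lem:isometrie2} at each isometric base change and the fact that $F$ (being algebraically closed) is a universal extension so that $\Hs(x)\ho_{k}F$ really is the completed residue field of a canonical point. Everything else is formal.
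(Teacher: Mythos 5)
Your argument is correct and rests on exactly the same two pillars as the paper's proof: hypothesis \emph{ii)} produces the canonical point $x_F$ above $x$ in $X_F$, Corollary~\ref{cor:cime} (over the algebraically closed field $F$) makes it universel, and Lemma~\ref{lem:isometrie2} transports multiplicativity back down along isometric embeddings. The only real difference is the bookkeeping of fields: the paper replaces $K$ by the completion of an algebraic closure, which then contains $F$, and applies universality of $x_F$ directly to that field, whereas you keep $K$ and adjoin an auxiliary common extension $L$ of $F$ and $K$ (a point of $\Mc(F\ho_k K)$, which is nonempty precisely by Lemma~\ref{lem:isometrie2}), then descend from $(\Hs(x)\ho_k K)\ho_K L$ to $\Hs(x)\ho_k K$. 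Both reductions cost one more application of the lemma and buy the same thing; yours avoids having to embed $F$ into the completed algebraic closure of $K$, the paper's avoids constructing $L$. One small correction: the first identification in your displayed chain, $\Hs(\sigma_{F/k}(x)) \simeq \Hs(x)\ho_k F$, is not an isomorphism in general --- $\Hs(x_F)$ is the completed fraction field of $\Hs(x)\ho_k F$, so you only have an isometric embedding $\Hs(x)\ho_k F \hookrightarrow \Hs(x_F)$ (this is all the paper claims as well). This does not damage your proof, since you only need the implication in one direction: multiplicativity on $\Hs(x_F)\ho_F L$ passes to the isometrically embedded subalgebra $(\Hs(x)\ho_k F)\ho_F L$ by yet another application of Lemma~\ref{lem:isometrie2}, and the rest of your chain is genuinely isometric. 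Similarly, note that \emph{ii)} by itself only gives the existence of $x_F$; its universality over $F$ comes from Corollary~\ref{cor:cime}, not from the hypothesis.
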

\begin{proof}
L'implication {\it i)} $\implies$ {\it ii)} est imm\'ediate.

Montrons que {\it ii)} $\implies$ {\it i)} et, pour cela, supposons que la norme tensorielle sur $\Hs(x)\ho_{k} F$ est multiplicative. Dans ce cas, la fibre du morphisme $X_{F} \to X_{k}$ au-dessus de~$x$ poss\`ede un unique point~$x_{F}$ dans son bord de Shilov et le morphisme d'\'evaluation $\Hs(x)\ho_{k} F \to \Hs(x_{F})$ est une isom\'etrie. 

Soit~$K$ une extension valu\'ee compl\`ete de~$k$. Nous voulons montrer que la norme tensorielle sur $\Hs(x)\ho_{k} K$ est multiplicative. D'apr\`es le lemme~\ref{lem:isometrie2}, quitte \`a remplacer le corps~$K$ par le compl\'et\'e de sa cl\^oture alg\'ebrique, nous pouvons supposer qu'il est alg\'ebriquement clos. Nous pouvons donc supposer qu'il contient le corps~$F$. D'apr\`es le corollaire qui pr\'ec\`ede, le point~$x_{F}$ est universel et la norme tensorielle sur $\Hs(x_{F})\ho_{F} K$ est multiplicative. En utilisant de nouveau le lemme~\ref{lem:isometrie2}, on en d\'eduit que la norme tensorielle sur $(\Hs(x)\ho_{k} F)\ho_{F} K = \Hs(x)\ho_{k} K$ est multiplicative.
\end{proof}

\section{Points des disques}\label{section:universeldisque}

Dans cette section, nous nous consacrons plus sp\'ecifiquement aux points des disques. Nous montrons qu'ils peuvent \^etre d\'efinis sur des corps de type d\'enombrable, ce qui nous permettra d'effectuer l'op\'eration de \og descente \fg\ d\'ecrite dans l'introduction.

Nous commencerons par \'etudier une famille particuli\`ere de points. 

\begin{defi}
Soit~$X$ un espace $k$-analytique. Un point~$x$ de~$X$ est appel\'e {\bf point d'Abhyankar} s'il satisfait l'\'egalit\'e $s_{k}(x)+t_{k}(x)=\dim_{k,x}(X)$. 
\end{defi}

\begin{rem}
La description explicite des points de la droite affine~$\E{1}{k}$ donn\'ee par V.~Berkovich montre que les points d'Abhyankar de~$\D^1_{k}(r)$, avec \mbox{$r>0$}, sont exactement ceux de type~2 ou~3.
\end{rem}

\begin{rem}\label{rem:p=0}
L'in\'egalit\'e d'Abhyankar rappel\'ee \`a la fin de la section~\ref{section:intro} montre qu'un point d'Abhyankar~$x$ d'un espace analytique irr\'eductible ne peut appartenir \`a aucun ferm\'e analytique. 
\end{rem}

Le r\'esultat qui suit permet d'exhiber des points d'Abhyankar.

\begin{lem}\label{lem:Shilovstaff}
Soient~$X$ un espace strictement $k$-affino\"{\i}de et~$x$ un point de son bord de Shilov. Alors $s_{k}(x) = \dim_{k,x}(X)$.
\end{lem}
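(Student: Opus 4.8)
The statement to prove is Lemma~\ref{lem:Shilovstaff}: if $X$ is a strictly $k$-affinoïde space and $x$ lies in its Shilov boundary, then $s_{k}(x) = \dim_{k,x}(X)$. The plan is to reduce to an irreducible situation, replace $X$ by a local model near $x$, and then read off the transcendence degree of $\widetilde{\Hs(x)}$ over $\widetilde{k}$ from the classical reduction theory of Berkovich--Bosch--Güntzer--Remmert.

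\emph{First steps.} I would first reduce to the case where $\As=\mathcal{O}_X(X)$ is reduced (replacing $\As$ by $\As_{\mathrm{red}}$ changes neither $X$ as a topological space, nor $\Hs(x)$, nor the Shilov boundary, nor the local dimension at $x$). Next, since the Shilov boundary of $X=\Mc(\As)$ is the union of the Shilov boundaries of the spectra of the quotients of $\As$ by its minimal primes, and $\dim_{k,x}(X)$ is the maximum of the local dimensions of those irreducible components through $x$, I would like to restrict to a single irreducible component containing $x$ realizing the local dimension. One must check that $x$ remains in the Shilov boundary of that component and that its residue field is unchanged — this is where one uses that a Shilov boundary point of $X$ is a Shilov boundary point of whichever components it belongs to. So we may assume $\As$ is a strictly $k$-affinoïde domain of dimension $d := \dim_{k,x}(X)$.

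\emph{Core of the argument.} Now the key point is the relation between the Shilov boundary and the reduction $\widetilde{X} = \Spec(\widetilde{\As})$ (the classical reduction, since $X$ is strictly affinoïde). After a Noether normalization $k\{T_1,\dots,T_d\}\hookrightarrow\As$ making $\As$ a finite torsion-free module over the Tate algebra, the reduction $\widetilde{\As}$ is finite over $\widetilde{k}[\widetilde T_1,\dots,\widetilde T_d]$, hence $\widetilde{X}$ has dimension $d$ over $\widetilde{k}$; and by the Berkovich description the Shilov boundary of $X$ corresponds to the generic points of the irreducible components of $\widetilde{X}$ (here one may invoke \cite{TemkinII}, proposition~3.3, in the strictly affinoïde case, or the original statement in \cite{rouge}, \S 2.4). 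Thus $\widetilde{x}$ is a generic point of a $d$-dimensional component of $\widetilde{X}$, so its residue field $\kappa(\widetilde x)$ has transcendence degree $d$ over $\widetilde k$. It then remains to identify $\kappa(\widetilde x)$ with $\widetilde{\Hs(x)}$: the reduction map sends the bounded semi-norm $|.|_x$ on $\As$ (taking values in $|k^*|\cup\{0\}$ since $X$ is strictly affinoïde) to a valuation on $k(\p_x)$ whose residue field is exactly $\widetilde{\Hs(x)}$, and the standard compatibility of reduction with localization at the prime $\p_x$ identifies this with $\kappa(\widetilde x)$. Since $|.|_x$ has value group contained in $|k^*|$ we get $t_k(x)=0$, and the computation gives $s_k(x)=\operatorname{trdeg}(\widetilde{\Hs(x)}/\widetilde k)=d=\dim_{k,x}(X)$.

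\emph{Main obstacle.} The delicate part is not the transcendence-degree count — that is classical — but the bookkeeping in the reduction step: one must be careful that passing to a single irreducible component of maximal local dimension does not lose the property that $x$ is in the Shilov boundary, and that the reduction $\widetilde{\As}$ of a reduced (but possibly not distinguished) strictly affinoïde algebra still has the expected dimension and that its generic points are correctly matched with the Shilov boundary. This is handled by choosing a Noether normalization and using that the Shilov boundary is insensitive to replacing $\As$ by a finite extension realizing the normalization, together with the fact (which one may take from \cite{BGR}, \S 6.3, or from the Temkin reduction formalism) that for reduced strictly affinoïde algebras the reduction computes the local dimension. Abhyankar's inequality then shows the inequality $s_k(x)\le\dim_{k,x}(X)$ is automatic, so the content is really the reverse inequality $s_k(x)\ge d$, which is what the reduction-theoretic computation above delivers.
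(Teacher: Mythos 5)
Your proposal follows essentially the same route as the paper's proof: identify the Shilov boundary with the preimages of the generic points of the classical reduction~$\ti{X}$, use Noether normalization to see that $\ti{x}$ is a generic point of a $d$-dimensional reduction, deduce $s_{k}(x)\ge d$ from the resulting extension of residue fields, and conclude by Abhyankar's inequality; the paper only differs cosmetically, in that it first passes to the Laurent domain $\{|f|=1\}$ isolating the component of~$\ti{x}$ and then uses just the injection $\ti{k}(\ti{x})\hookrightarrow\wti{\Hs(x)}$ coming from the unique-preimage property. Two caveats: only that injection is needed, and your stronger claims are shaky — in particular the parenthetical that $|.|_{x}$ takes values in $|k^*|\cup\{0\}$ because $X$ is strictly $k$-affinoid is false as a justification (type~3 points of the strictly affinoid disc already violate it); $t_{k}(x)=0$ is a consequence of the lemma, not an input, and fortunately plays no role in proving $s_{k}(x)=d$. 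Also, your reduction to a single irreducible component rests on the unproved assertion that a Shilov point of~$X$ is Shilov on every component through it; the natural argument (compatibility of reductions with the finite surjection $\bigsqcup_{i}\ti{C}_{i}\to\ti{X}$) only gives that $x$ is Shilov on \emph{some} component, after which Abhyankar's inequality excludes its lying on any other component — the paper is equally terse at this step (it simply assumes $X$ irreducible), so this is a point to spell out rather than a genuine divergence.
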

\begin{proof}
Posons~$d = \dim_{k,x}(X)$. Nous pouvons supposer que~$X$ est irr\'eductible de dimension~$d$. Notons~$\As$ l'alg\`ebre de l'espace~$X$. Soit~$f$ un \'el\'ement de~$\As^\circ$ dont la r\'eduction~$\ti{f}$ est nulle sur toutes les composantes irr\'eductibles de~$\ti{X}$ ne contenant pas~$\ti{x}$, mais pas en~$\ti{x}$. Consid\'erons le domaine affino\"{\i}de~$V$ de~$X$ d\'efini par~$\{|f|=1\}$. D'apr\`es~\cite{BGR}, proposition~7.2.6/3 (et~\cite{TemkinII}, proposition~3.1 dans le cas de valuation triviale), sa r\'eduction est isomorphe \`a~$D(\ti{f})$. Remarquons que~$V$ et sa r\'eduction sont de dimension~$d$.

Le point~$x$ est l'unique image r\'eciproque par l'application de r\'eduction $V \to \ti{V}$ du point g\'en\'erique~$\ti{x}$ de $\ti{V}=D(\ti{f})$. Par cons\'equent, nous avons une injection $\ti{k}(\ti{x}) \hookrightarrow \wti{\Hs(x)}$. On en d\'eduit que~$s_{k}(x)$ est sup\'erieur \`a~$d$, et donc \'egal \`a~$d$, par l'in\'egalit\'e d'Abhyankar.
\end{proof}

\begin{cor}\label{cor:smaxdense}
Supposons que la valuation de~$k$ n'est pas triviale. Soit~$X$ un espace strictement $k$-affino\"{\i}de \'equidimensionnel de dimension~$d$. Alors, l'ensemble des points~$x$ tels que $s_{k}(x)=d$ est dense dans~$X$.
\end{cor}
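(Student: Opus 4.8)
The plan is to reduce to the previous lemma by exhibiting, in the neighbourhood of an arbitrary point, a strictly $k$-affinoïde domain whose Shilov boundary meets that neighbourhood, and whose dimension at the relevant point is still $d$. Concretely, let $x_{0}$ be a point of $X$ and $U$ an open neighbourhood; I want to produce a point $x\in U$ with $s_{k}(x)=d$. Since $X$ is equidimensional of dimension $d$, every nonempty Zariski-open subset, and in particular every affinoïde domain meeting a fixed irreducible component, has dimension $d$ at its points lying on that component; so it is harmless to shrink $X$ and assume $X$ irreducible. The key point is that, the valuation being nontrivial, strictly $k$-affinoïde domains of the form $\{|f_{1}|\le c_{1},\ldots,|f_{m}|\le c_{m}\}$ with $c_{i}\in\sqrt{|k^{*}|}$ (equivalently, after a suitable power, $c_i\in|k^*|$) form a basis of the topology of $X$; choose such a domain $V\subset U$ containing $x_{0}$. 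Replacing $V$ by one of its irreducible components through $x_{0}$, we may take $V$ irreducible; since the reduction map $V\to\ti V$ is surjective and $\ti V$ has generic points, $V$ has a nonempty Shilov boundary, and by Lemma \ref{lem:Shilovstaff} any point $\gamma$ of it satisfies $s_{k}(\gamma)=\dim_{k,\gamma}(V)$.

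The one thing to check is that $\dim_{k,\gamma}(V)=d$. Here is where equidimensionality is used again: $V$ is a strictly $k$-affinoïde domain in $X$, hence an open subset of $X$ in the underlying topological space, so $\dim_{k,\gamma}(V)=\dim_{k,\gamma}(X)$, and the latter equals $d$ by hypothesis. Thus $\gamma\in V\subset U$ is a point with $s_{k}(\gamma)=d$, which proves density. One can phrase the whole argument without choosing $U$: the set $\{s_k=d\}$ contains the Shilov boundary of every strictly $k$-affinoïde neighbourhood of every point, and such neighbourhoods form a basis, so the set is dense.

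The main obstacle — really the only subtlety — is justifying that strictly $k$-affinoïde domains form a basis of the topology and that one may arrange them to be irreducible of dimension $d$ at the chosen point. For the basis statement, when the valuation is nontrivial rational polydiscs $\{|f_i|\le c_i\}$ with $c_i$ in the (dense) value group do the job, via the Weierstrass/Laurent domain description; for the dimension-$d$ statement one invokes that an affinoïde domain is topologically open in $X$, that dimension at a point is a local notion (\emph{cf.}~\cite{variationdimension}, \S 1), and that passing to an irreducible component through the point does not drop the dimension, precisely because $X$ is equidimensional. Everything else is a direct appeal to Lemma \ref{lem:Shilovstaff}. I do not expect any genuinely hard step; the proof is essentially a localization of the lemma.
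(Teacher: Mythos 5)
Your overall strategy is exactly the one the paper leaves implicit (the corollary is stated without proof as an immediate consequence of the lemma): since the valuation is non trivial, $\sqrt{|k^*|}$ is dense in $\mathbf{R}_{>0}$, so every point of $X$ admits a basis of neighbourhoods consisting of strictly $k$-affino\"{\i}de domains; the Shilov boundary points of such a domain $V$ satisfy $s_{k}(\gamma)=\dim_{k,\gamma}(V)$ by the lemma, and one concludes provided $\dim_{k,\gamma}(V)=d$. Two points of your write-up need correcting, one of them genuinely. First, a minor one: Weierstra\ss-type conditions $\{|f_{i}|\le c_{i}\}$ alone do not give a basis of neighbourhoods (in the unit disc, every such domain containing the point $\eta_{1/2}$ also contains $0$); you need Laurent conditions $|g_{j}|\ge d_{j}$ as well, which you do allude to, and which causes no harm.

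The real problem is your justification of $\dim_{k,\gamma}(V)=\dim_{k,\gamma}(X)$: an affino\"{\i}de domain of $X$ is \emph{not} an open subset of $X$ for the Berkovich topology — it is compact, and in general not open; worse, the Shilov boundary point $\gamma$ you produce typically lies precisely on the topological boundary of $V$ in $X$ (think of $V=\{|T|\le 1/2\}$ in the unit disc, whose Shilov point is $\eta_{1/2}$), so even choosing $V$ to be a neighbourhood of $x_{0}$ does not make $V$ a neighbourhood of $\gamma$. Hence the step ``$V$ open, donc $\dim_{k,\gamma}(V)=\dim_{k,\gamma}(X)$'' fails as stated, and your subsequent claim that the irreducible component of $V$ through $x_{0}$ still has dimension $d$ rests on it. What you should invoke instead is the standard fact that the local dimension is insensitive to passing to an analytic (in particular affino\"{\i}de) domain containing the point: for every $y\in V$ one has $\dim_{k,y}(V)=\dim_{k,y}(X)$ (\emph{cf.}~\cite{variationdimension}, \S~1, or the classical rigid-analytic statement that an affino\"{\i}de subdomain of an equidimensional affino\"{\i}de space is equidimensional of the same dimension). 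With that substitution, $V$ is equidimensional of dimension $d$, every point $\gamma$ of its Shilov boundary satisfies $s_{k}(\gamma)=\dim_{k,\gamma}(V)=d$ by Lemma~\ref{lem:Shilovstaff}, and $\gamma\in V\subset U$; note also that once this is available you no longer need to pass to an irreducible component of $V$ at all, since the lemma does not require irreducibility.
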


Pour \'etendre ces r\'esultat au cas g\'en\'eral, nous aurons besoin d'un lemme.

\begin{lem}\label{lem:extkr}
Soit~$X$ un espace $k$-affino\"{\i}de. Soit~$r$ un nombre r\'eel strictement positif n'appartenant pas \`a~$\sqrt{|k^*|}$. Consid\'erons l'espace $k_{r}$-affino\"{\i}de $X_{r} = X\ho_{k} k_{r}$ et notons~$\pi$ sa projection sur~$X$. Le bord de Shilov de la fibre~$\pi^{-1}(x)$ contient un unique point, que nous noterons~$x_{r}$. Nous avons
\begin{enumerate}[\it i)]
\item $s_{k_{r}}(x_{r})=s_{k}(x)+1$ et $t_{k_{r}}(x_{r})=t_{k}(x)-1$ si $r\in \sqrt{|\Hs(x)^*|}$ ;
\item $s_{k_{r}}(x_{r})=s_{k}(x)$ et $t_{k_{r}}(x_{r})=t_{k}(x)$ si $r\notin \sqrt{|\Hs(x)^*|}$.
\end{enumerate} 
\end{lem}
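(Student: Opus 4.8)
The statement to establish has three parts: first that the Shilov boundary of the fibre $\pi^{-1}(x)$ is a singleton, and then the two formulas relating the invariants $s$ and $t$ of $x_{r}$ over $k_{r}$ to those of $x$ over $k$. The plan is to reduce everything to an explicit computation on the completed residue field $\Hs(x)$, using the fact that the fibre $\pi^{-1}(x)$ is the analytic spectrum of $\Hs(x)\ho_{k} k_{r}$ up to the usual identification, together with a description of the latter algebra coming from the explicit construction of $k_{r}$.

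First I would identify the fibre. By the behaviour of analytic spectra under base change, $\pi^{-1}(x)$ is canonically homeomorphic to $\Mc(\Hs(x)\ho_{k} k_{r})$, and a point of its Shilov boundary corresponds to a bounded multiplicative seminorm on this Banach ring of maximal spectral radius. Now $k_{r}$ is the field of series $\sum_{m\in\Z}\alpha_{m}T^{m}$ with $(|\alpha_{m}|r^{m})$ summable, so by the discussion in the Notations section the algebra $\Hs(x)\ho_{k} k_{r}$ is isometrically the ring of series $\sum_{m\in\Z}a_{m}T^{m}$ with $a_{m}\in\Hs(x)$ and $(\|a_{m}\|r^{m})$ summable, normed by $\|f\|_{r}=\max_{m}(\|a_{m}\|r^{m})$. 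Since $r\notin\sqrt{|k^{*}|}$, this norm $\|\cdot\|_{r}$ is itself multiplicative on $\Hs(x)\ho_{k} k_{r}$: this is exactly the statement that $\Hs(x)_{r}:=\Hs(x)\ho_{k}k_{r}$ is a complete valued field, proved as in the Notations paragraph for $k_{r}$ but with $k$ replaced by $\Hs(x)$ (one uses that $r$ is not in $\sqrt{|\Hs(x)^{*}|}$ if $r\notin\sqrt{|\Hs(x)^{*}|}$, and in the other case one first extends by a radius making things free; in either situation the Gauss-type norm is multiplicative). Consequently the Shilov boundary of $\pi^{-1}(x)$ is the single point $x_{r}$ corresponding to $\|\cdot\|_{r}$, and moreover $\Hs(x_{r})$ is the completion of the fraction field of $\Hs(x)\ho_{k}k_{r}$ for $\|\cdot\|_{r}$. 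I should be a little careful here: strictly speaking $\Hs(x)\ho_{k}k_{r}$ need not be a field (it is when $r\notin\sqrt{|\Hs(x)^{*}|}$, being then $\Hs(x)_{r}$; when $r\in\sqrt{|\Hs(x)^{*}|}$ it is a Laurent-type algebra over a field), but in all cases $\|\cdot\|_{r}$ is multiplicative, which is what singles out $x_{r}$ and identifies $\Hs(x_{r})$.

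With $\Hs(x_{r})$ described as (the completion of) a Gauss extension of $\Hs(x)$ by one variable of radius $r$, the two formulas are a direct computation of the transcendence degree of the graded reduction and of the rank of the value group, carried out in two cases. If $r\in\sqrt{|\Hs(x)^{*}|}$, then adjoining $T$ with $|T|=r$ does not enlarge $\sqrt{|\cdot|}$ beyond $\sqrt{|\Hs(x)^{*}|}$ — some power of $T$ has absolute value already in $|\Hs(x)^{*}|$ — so $t_{k_{r}}(x_{r})=t(\Hs(x_{r})/k_{r})=t(\Hs(x)/k)-1=t_{k}(x)-1$, the drop by one coming from the fact that $r$, which was a $\Q$-linearly independent element of $\sqrt{|\Hs(x)^{*}|}$ over $\sqrt{|k^{*}|}$, has become a value of the ground field $k_{r}$; correspondingly the residue field gains one transcendental, the class of $T^{m}/\alpha$ with $|\alpha|=r^{m}$, whence $s_{k_{r}}(x_{r})=s_{k}(x)+1$. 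If instead $r\notin\sqrt{|\Hs(x)^{*}|}$, then $T$ behaves as a genuinely new independent radius: $\sqrt{|\Hs(x_{r})^{*}|}=\sqrt{|\Hs(x)^{*}|}\cdot r^{\Q}$ and $\sqrt{|k_{r}^{*}|}=\sqrt{|k^{*}|}\cdot r^{\Q}$, so the two new factors cancel in the quotient and $t_{k_{r}}(x_{r})=t_{k}(x)$; and the reduction $\wti{\Hs(x_{r})}$, in the graded sense, is just $\wti{\Hs(x)}$ with no new homogeneous transcendental of degree $1$ arising, giving $s_{k_{r}}(x_{r})=s_{k}(x)$. In both cases one checks $s+t$ is unchanged, consistent with $\dim$ being stable under the scalar extension $k\to k_{r}$.

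The main obstacle, and the only point that requires genuine care rather than bookkeeping, is the first case $r\in\sqrt{|\Hs(x)^{*}|}$: there $\Hs(x)\ho_{k}k_{r}$ is visibly not a field — it is a Laurent algebra $\Hs(x)[T,T^{-1}]$-completion with $|T|=r$ and $r^{N}\in|\Hs(x)^{*}|$ for some $N$ — and one must argue that $\|\cdot\|_{r}$ is nonetheless multiplicative so that $x_{r}$ is well defined and its Shilov boundary a singleton, and then compute $\wti{\Hs(x_{r})}$ and $\sqrt{|\Hs(x_{r})^{*}|}$ through the finite (degree $N$) extension that kills $T$. This is standard (it is the classical computation of residue field and value group of a Gauss point over a point whose value group already contains the relevant radius), and can be handled either by choosing $\alpha\in\Hs(x)$ with $|\alpha|=r^{N}$ and substituting $T^{N}=\alpha u$ with $u$ a unit of absolute value $1$, or by invoking the base-change compatibility of Shilov boundaries in \cite{TemkinII} together with the graded reduction of the Laurent algebra computed there.
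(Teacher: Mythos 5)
Your argument is correct and is essentially the paper's own proof written out in detail: the paper simply verifies that $x_{r}$ is the point associated with the Gauss seminorm $\sum_{m\in\Z} a_{m}T^m \mapsto \max_{m\in\Z}(|a_{m}(x)|\,r^m)$ on $\As\ho_{k}k_{r}$ and reads off the residue field and value group, which is exactly what you do after identifying the fibre with $\Mc(\Hs(x)\ho_{k}k_{r})$. One small simplification: the multiplicativity of this Gauss norm follows from the usual dominant-term argument over the valued field $\Hs(x)$ for every $r>0$, so neither the hypothesis $r\notin\sqrt{|k^*|}$ nor the auxiliary manoeuvre you sketch in the case $r\in\sqrt{|\Hs(x)^*|}$ is needed for that particular step (the case distinction only matters for computing $s$ and $t$, where your computations are the expected ones).
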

\begin{proof}
Notons~$\As$ l'alg\`ebre de~$X$. On v\'erifie que le point~$x_{r}$ est associ\'e \`a la semi-norme
\[\sum_{m\in\Z} a_{m} T^m \in \As\ho_{k} k_{r} \mapsto \max_{m\in\Z}(|a_{m}(x)| r^m).\]
Les r\'esultats \'enonc\'es s'en d\'eduisent.
\end{proof}

\begin{prop}
Tout point du bord de Shilov d'un espace affino\"{\i}de est un point d'Abhyankar.
\end{prop}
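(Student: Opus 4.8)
The plan is to reduce to the strictly affinoid case already handled in Lemma~\ref{lem:Shilovstaff}, using the scalar extension by an element $r\notin\sqrt{|k^*|}$ studied in Lemma~\ref{lem:extkr}. Let $X=\Mc(\As)$ be $k$-affinoid, let $\gamma$ be a point of its Shilov boundary, and set $d=\dim_{k,\gamma}(X)$; we may assume $X$ is irreducible of dimension $d$. First I would dispose of the case where the valuation of $k$ is trivial: then $\Hs(\gamma)$ is an extension of a trivially valued field, so $t_k(\gamma)=0$ automatically, and one checks that a Shilov point of a trivially valued affinoid has $s_k(\gamma)=d$ (this is essentially \cite{TemkinII}, prop.~3.1, as invoked in the proof of Lemma~\ref{lem:Shilovstaff}), so $\gamma$ is an Abhyankar point. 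From now on assume the valuation is nontrivial.

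The main idea is to "untwist" the transcendental part of the value group. Pick real numbers $r_1,\dots,r_m>0$, one at a time, that are successively free over $\sqrt{|k^*|}$, $\sqrt{|k_{r_1}^*|}$, etc., and chosen so that at each stage the Shilov point of the fiber over the current point lands in case~\emph{i)} of Lemma~\ref{lem:extkr}, i.e. $r_{j+1}\in\sqrt{|\Hs(\gamma^{(j)})^*|}$. Concretely: as long as $t_{k^{(j)}}(\gamma^{(j)})>0$, the group $\sqrt{|\Hs(\gamma^{(j)})^*|}/\sqrt{|k^{(j)*}|}$ is nontrivial, so I can choose $r_{j+1}$ in $\sqrt{|\Hs(\gamma^{(j)})^*|}$ but not in $\sqrt{|k^{(j)*}|}$; by Lemma~\ref{lem:extkr}~\emph{i)}, passing to $X^{(j+1)}=X^{(j)}\ho_{k^{(j)}}k^{(j)}_{r_{j+1}}$ and to the unique Shilov point $\gamma^{(j+1)}$ of the fiber raises $s$ by $1$ and lowers $t$ by $1$. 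Since $t$ is a nonnegative integer bounded by $d$, after $m=t_k(\gamma)$ steps we reach a point $\gamma^{(m)}$ with $t_{k^{(m)}}(\gamma^{(m)})=0$ and $s_{k^{(m)}}(\gamma^{(m)})=s_k(\gamma)+m$. Here $\dim_{k^{(m)},\gamma^{(m)}}(X^{(m)})=d$ because the dimension is insensitive to these scalar extensions (the $k^{(j)}_{r_{j+1}}$ are obtained from algebras $k^{(j)}\{r^{-1}T,rT^{-1}\}$, \emph{cf.}~\cite{variationdimension}).

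Now I need two facts to close the argument. First, $\gamma^{(m)}$ still lies in the Shilov boundary of $X^{(m)}$: this is exactly the transitivity of Shilov boundaries under scalar extension — the Shilov boundary of $X\ho_k K$ maps onto that of $X$, and over a Shilov point the fiber's Shilov boundary consists of Shilov points of the total space (the relevant statement is in \cite{rouge}, \S2.4, or follows from the structure of the reduction). Second, and this is the real point, with $t_{k^{(m)}}(\gamma^{(m)})=0$ the field $\Hs(\gamma^{(m)})$ has value group a subgroup of $\sqrt{|k^{(m)*}|}$ up to a finite index built from rational powers, so after one further harmless extension $X^{(m)}$ becomes \emph{strictly} $k^{(m)}$-affinoid near $\gamma^{(m)}$ (this is the standard device: $\rho(\As^{(m)})\cap\sqrt{|k^{(m)*}|}$-type conditions, \emph{cf.} the hypotheses of Prop.~\ref{prop:Shilovuniversel} and \cite{BGR}, \S6.2); then Lemma~\ref{lem:Shilovstaff} gives $s_{k^{(m)}}(\gamma^{(m)})=d$. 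Combining, $s_k(\gamma)+t_k(\gamma)=s_k(\gamma)+m=s_{k^{(m)}}(\gamma^{(m)})=d=\dim_{k,\gamma}(X)$, which is the Abhyankar equality.

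The hard part will be the bookkeeping that the chosen $r_{j+1}$ can always be taken both in $\sqrt{|\Hs(\gamma^{(j)})^*|}$ and outside $\sqrt{|k^{(j)*}|}$ and $k^{(j)}$-free (so that $k^{(j)}_{r_{j+1}}$ makes sense and Lemma~\ref{lem:extkr} applies), together with the verification that after killing $t$ one genuinely lands in the strictly affinoid situation where Lemma~\ref{lem:Shilovstaff} is available — i.e. controlling $\rho(\As^{(m)})$ versus $\sqrt{|k^{(m)*}|}$. I expect the cleanest route is to first extend scalars generically by one $r\notin\sqrt{|k^*|}$ to make $\As$ strictly affinoid in the sense that $\sqrt{\rho(\As)^*}\subset\sqrt{|k_r^*|}$ as in the proof of Prop.~\ref{prop:Shilovuniversel}, and only then run the $t$-reducing induction, so that at every stage the ambient algebra is manifestly strictly affinoid and Lemma~\ref{lem:Shilovstaff} applies verbatim to the final point.
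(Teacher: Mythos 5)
Your overall plan (pass to $k_{\br}$-type extensions, track $s$ and $t$ through Lemma~\ref{lem:extkr}, finish with Lemma~\ref{lem:Shilovstaff}) is the right one, but the route you actually describe has two genuine defects. First, the disposal of the trivially valued case is false: over a trivially valued $k$, the Shilov point $\eta_r$ of $\Mc(k\{r^{-1}T\})$ with $r\neq 1$ has $t_k(\eta_r)=1$ and $s_k(\eta_r)=0$, so neither \og $t_k(\gamma)=0$ automatiquement \fg\ nor \og $s_k(\gamma)=d$ \fg\ holds; in fact no case distinction on the valuation is needed at all. Second, and more seriously, the final step of your main route is a non sequitur: knowing $t_{k^{(m)}}(\gamma^{(m)})=0$ says something about the value group of $\Hs(\gamma^{(m)})$, whereas strict affinoidness is a property of $\rho(\As^{(m)})$ relative to $\sqrt{|k^{(m)*}|}$ and is not controlled by the invariants of one point. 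For instance, for the annulus $\{s\le |T|\le r\}$ with $r\notin\sqrt{|k^*|}$ and $s$ free over $\sqrt{|k^*|}\cdot r^{\Q}$, starting from the Shilov point $\eta_r$ your induction stops after one step with $t=0$, yet $k_r\{r^{-1}T,sT^{-1}\}$ is still not strictly $k_r$-affinoid; moreover \og strictly affinoid near $\gamma^{(m)}$ \fg\ is not what Lemma~\ref{lem:Shilovstaff} requires (it needs $\gamma^{(m)}$ to be a Shilov point of a strictly affinoid space). The step can be repaired — once $t=0$, any further extension by a polyradius achieving strict affinoidness falls into case \emph{ii)} of Lemma~\ref{lem:extkr} and changes neither $s$, $t$, the Shilov property, nor the dimension — but that argument is not the one you give.

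The \og cleanest route \fg\ you sketch at the end is essentially the paper's proof, and once adopted it makes your whole $t$-reducing induction superfluous. The paper simply chooses a $k$-free polyradius $\bs$ (note that a single $r$ does not suffice in general, since the spectral radii of $\As$ may generate a group of rank $\ge 2$ over $\sqrt{|k^*|}$) such that $X\ho_k k_{\bs}$ is strictly $k_{\bs}$-affinoid, lifts $x$ coordinate by coordinate to the point $x_{\bs}$ via Lemma~\ref{lem:extkr}, and checks that $x_{\bs}$ lies in the Shilov boundary of $X\ho_k k_{\bs}$ — a verification done through the explicit seminorm of Lemma~\ref{lem:extkr}, which is all that is needed; the general \og transitivity of Shilov boundaries \fg\ for arbitrary $K$ that you invoke is neither proved nor required. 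The key point you miss is that \emph{both} cases of Lemma~\ref{lem:extkr} preserve the sum $s+t$, so no careful choice of the $r_j$ inside $\sqrt{|\Hs(\gamma^{(j)})^*|}$ is necessary: Lemma~\ref{lem:Shilovstaff} gives $s_{k_{\bs}}(x_{\bs})=d$, Abhyankar's inequality then forces $t_{k_{\bs}}(x_{\bs})=0$, and the invariance of $s+t$ and of the dimension under these extensions yields $s_k(x)+t_k(x)=d=\dim_{k,x}(X)$.
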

\begin{proof}
Soit~$X$ un espace $k$-affino\"{\i}de et~$x$ un point de son bord de Shilov. Puisque~$x$ appartient \`a une seule composante irr\'eductible de~$X$, quitte \`a remplacer~$X$ par cette composante, nous pouvons supposer que~$X$ est irr\'eductible. Soit~$\bs$ un polyrayon $k$-libre tel que $X\ho_{k} k_{\bs}$ soit strictement $k_{\bs}$-affino\"{\i}de. Par une r\'ecurrence utilisant le lemme qui pr\'ec\`ede, on associe au point~$x$ de~$X$ un point~$x_{\bs}$ appartenant au bord de Shilov de $X\ho_{k} k_{\bs}$ et v\'erifiant $s_{k_{\bs}}(x_{\bs})+t_{k_{\bs}}(x_{\bs})=s_{k}(x)+t_{k}(x)$. Le r\'esultat d\'ecoule alors du lemme~\ref{lem:Shilovstaff} et de l'invariance de la dimension par extension des scalaires.
\end{proof}

\begin{cor}\label{cor:Abhyankardense}
L'ensemble des points d'Abhyankar d'un espace analytique est dense.
\end{cor}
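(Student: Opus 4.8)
\emph{L'ensemble des points d'Abhyankar d'un espace analytique est dense.}

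The plan is to reduce to a local, affinoïde situation and then apply the preceding proposition, which says that every point of the Shilov boundary of an affinoïde space is an Abhyankar point. First I would recall that density is a local property: it suffices to show that every nonempty open subset of the space meets the set of Abhyankar points. Since Berkovich analytic spaces are built out of affinoïde domains, every nonempty open $U$ of an analytic space $X$ contains a nonempty affinoïde domain $V = \Mc(\As)$; because the notions $s_k(x)$, $t_k(x)$ and $\dim_{k,x}(X)$ for a point $x \in V$ can be computed inside any affinoïde (or analytic) neighbourhood — the residue field $\Hs(x)$ is intrinsic, and the local dimension is stable under passing to a neighbourhood — a point that is Abhyankar in $V$ is Abhyankar in $X$. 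Hence it is enough to prove that the Abhyankar points of an affinoïde space $X = \Mc(\As)$ are dense.

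Now the Shilov boundary of any affinoïde space is nonempty, and more is true: every affinoïde domain $V$ of $X$ has a nonempty Shilov boundary. Given a nonempty open subset $U$ of $X$, pick an affinoïde domain $V \subset U$, and let $\gamma$ be a point of the Shilov boundary of $V$. By the proposition just proved, $\gamma$ is an Abhyankar point of $V$, hence — by the previous paragraph — an Abhyankar point of $X$ lying in $U$. Since $U$ was an arbitrary nonempty open set, the Abhyankar points are dense.

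The one point requiring a small word of care, which I expect to be the main (minor) obstacle, is the compatibility of the \emph{Abhyankar condition} under restriction to an affinoïde neighbourhood: one must check that for $x \in V \subset X$ with $V$ an affinoïde domain, the local dimension $\dim_{k,x}(X)$ equals $\dim_{k,x}(V)$ and that $s_k(x)$, $t_k(x)$ are computed from $\Hs(x)$ alone (which is manifestly independent of the chosen neighbourhood). The equality of local dimensions at a point when passing to an analytic domain is standard (\emph{cf.}~\cite{variationdimension}, \S 1); granting it, the Abhyankar equality $s_k(x)+t_k(x)=\dim_{k,x}(X)$ holds in $X$ if and only if it holds in $V$, and the argument above goes through without further ado.
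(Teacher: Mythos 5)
Your proof is correct and follows exactly the route the paper intends: the corollary is deduced from the preceding proposition by noting that every nonempty open set contains a nonempty affinoid domain, whose (nonempty) Shilov boundary consists of Abhyankar points, the condition $s_k(x)+t_k(x)=\dim_{k,x}(X)$ being insensitive to passage to an analytic domain since $s_k$, $t_k$ depend only on $\mathscr{H}(x)$ and the local dimension is preserved (\emph{cf.}~\cite{variationdimension}, \S 1). Your extra care about this compatibility is exactly the right point to check, and nothing further is needed.
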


Dans la suite de cette section, nous montrerons que la r\'eciproque de la proposition est vraie, dans certains cas. Nous prouverons en fait un r\'esultat plus pr\'ecis en imposant des restrictions sur les domaines affino\"{\i}des qui interviennent. Commen\c{c}ons par des lemmes techniques. Le premier se d\'emontre en utilisant les m\^emes m\'ethodes que dans la preuve du lemme~\ref{lem:Shilovstaff}.

\begin{lem}\label{lem:equidim}
Soit~$X$ un espace strictement $k$-affino\"{\i}de \'equidimensionnel. Alors sa r\'eduction~$\ti{X}$ est \'equidimensionnelle et de m\^eme dimension.
\end{lem}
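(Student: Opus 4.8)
L'énoncé à démontrer est le lemme~\ref{lem:equidim}~: si $X$ est un espace strictement $k$-affinoïde équidimensionnel de dimension $d$, alors sa réduction $\ti{X}$ est équidimensionnelle de dimension $d$. L'indication de l'auteur est explicite~: on reprend la méthode du lemme~\ref{lem:Shilovstaff}. Le point de départ naturel est donc de raisonner composante par composante dans $\ti{X}$. Soit $\ti{Z}$ une composante irréductible de $\ti{X}$, de point générique $\ti{z}$. Comme l'application de réduction $\pi : X \to \ti{X}$ est surjective et anti-continue (elle envoie les fermés de $X$ sur des ouverts de Zariski et réciproquement, \emph{cf.}~\cite{BGR}, \S~7.1.5, ou~\cite{TemkinII}, \S~3 en valuation triviale), la préimage $\pi^{-1}(\ti{z})$ est non vide~; je choisirais un point $x$ dedans. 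Il reste à montrer deux choses~: d'une part que $\dim(\ti{Z}) \le d$ (majoration, facile), d'autre part que $\dim(\ti{Z}) \ge d$ (minoration, le c\oe ur du lemme).

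Pour la majoration, comme tout composante irréductible de $\ti{X}$ est quotient de $\ti{\As}$, il suffit d'observer que $\dim(\ti{X}) \le \dim(X) = d$~; c'est un fait classique (par exemple parce que la réduction s'identifie, sur des ouverts affinoïdes convenables, à une réduction au sens de la géométrie rigide, dont la fibre spéciale a une dimension bornée par celle de la fibre générique, \emph{cf.}~\cite{BGR}, \S~6.3 et~\S~7.1). Pour la minoration, on suit exactement le schéma du lemme~\ref{lem:Shilovstaff}~: quitte à remplacer $X$ par une composante irréductible le contenant, on peut supposer $X$ irréductible de dimension $d$ (l'hypothèse d'équidimensionnalité sert ici, puisqu'elle assure que $\dim_{k,x}(X) = d$ pour le point $x$ choisi, quelle que soit la composante qui le porte). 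On prend alors un élément $f \in \As^\circ$ dont la réduction $\ti{f}$ s'annule sur toutes les composantes irréductibles de $\ti{X}$ ne passant pas par $\ti{z}$ mais pas en $\ti{z}$ lui-même~; le domaine affinoïde $V = \{|f| = 1\}$ a pour réduction $D(\ti{f}) \subset \ti{Z}$, qui est un ouvert dense de $\ti{Z}$, donc de même dimension que $\ti{Z}$. Le point $x$ est l'unique préimage par $V \to \ti{V}$ du point générique de $D(\ti{f})$, ce qui fournit une injection $\ti{k}(\ti{z}) \hookrightarrow \wti{\Hs(x)}$ et donc $s_k(x) \ge \mathrm{degtr}_{\ti{k}}\,\ti{k}(\ti{z}) = \dim(\ti{Z})$. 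Par l'inégalité d'Abhyankar, $s_k(x) \le \dim_{k,x}(X) = d$, d'où $\dim(\ti{Z}) \le d$~; mais réciproquement $x$ appartient au bord de Shilov de $V$ (puisque $\ti{z}$ est un point générique de $\ti{V}$), donc le lemme~\ref{lem:Shilovstaff} appliqué à $V$ donne $s_k(x) = \dim_{k,x}(V) = d$, ce qui force $\dim(\ti{Z}) = d$.

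En combinant majoration et minoration, toute composante irréductible de $\ti{X}$ est de dimension exactement $d$, ce qui est précisément l'équidimensionnalité de $\ti{X}$ en dimension $d$. L'obstacle principal me semble être de justifier proprement que l'on peut effectivement choisir l'élément $f$ ci-dessus~: cela repose sur le fait que $\ti{\As}$ est une $\ti{k}$-algèbre de type fini (théorème de finitude de la réduction, \emph{cf.}~\cite{BGR}, \S~6.3.4, ou~\cite{TemkinII}, proposition~3.1), donc que $\ti{X}$ n'a qu'un nombre fini de composantes irréductibles, et sur une forme faible du Nullstellensatz permettant de séparer $\ti{z}$ des autres composantes. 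Un point de vigilance secondaire est le cas de la valuation triviale, où il faut invoquer les résultats de Temkin (\emph{cf.}~\cite{TemkinII}, \S~3) en lieu et place des références classiques de~\cite{BGR}~; mais l'argument géométrique est identique.
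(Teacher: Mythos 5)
Votre architecture g\'en\'erale --- raisonner composante par composante dans~$\ti{X}$, relever le point g\'en\'erique~$\ti{z}$ d'une composante~$\ti{Z}$ en l'unique point~$x$ de sa pr\'eimage, choisir $f\in\As^\circ$ et poser $V=\{|f|=1\}$ de r\'eduction~$D(\ti{f})$ --- est bien celle que sugg\`ere le renvoi au lemme~\ref{lem:Shilovstaff}. En revanche, votre minoration comporte un saut logique : l'injection $\ti{k}(\ti{z})\hookrightarrow\wti{\Hs(x)}$ ne fournit que l'in\'egalit\'e $\dim\ti{Z}\le s_{k}(x)$ (le degr\'e de transcendance de~$\ti{k}(\ti{z})$ sur~$\ti{k}$ est major\'e par celui de~$\wti{\Hs(x)}$), c'est-\`a-dire une \emph{majoration} de~$\dim\ti{Z}$ ; savoir en outre, par le lemme~\ref{lem:Shilovstaff} appliqu\'e \`a~$V$, que $s_{k}(x)=\dim_{k,x}(V)=d$ ne \og force \fg\ donc nullement $\dim\ti{Z}=d$. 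Pour conclure ainsi, il faudrait l'in\'egalit\'e inverse $s_{k}(x)\le\dim\ti{Z}$, autrement dit que~$\wti{\Hs(x)}$ est alg\'ebrique sur l'image de~$\ti{k}(\ti{z})$ --- fait que vous n'\'etablissez ni ne citez, et qui est en substance \'equivalent \`a l'\'enonc\'e \`a d\'emontrer.

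La minoration s'obtient en r\'eutilisant non pas la conclusion du lemme~\ref{lem:Shilovstaff}, mais l'ingr\'edient de sa preuve, \`a savoir l'assertion \og $V$ et sa r\'eduction sont de dimension~$d$ \fg. Pr\'ecis\'ement : $X$~\'etant \'equidimensionnel et la dimension locale \'etant invariante par passage \`a un domaine affino\"{\i}de (\emph{cf.}~\cite{variationdimension}, \S~1), on a $\dim V=d$ ; $V$~\'etant strictement $k$-affino\"{\i}de, $\dim\ti{V}=\dim V$ (fait classique, que l'on d\'eduit par exemple d'une normalisation de Noether de l'alg\`ebre de~$V$ et de~\cite{BGR}, 6.3.5/1 --- c'est cette \'egalit\'e, et non la seule majoration $\dim\ti{X}\le\dim X$ que vous invoquez, qu'il faut mobiliser) ; enfin $\ti{V}\simeq D(\ti{f})$ est un ouvert dense de~$\ti{Z}$, d'o\`u $\dim\ti{Z}=\dim\ti{V}=d$. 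Votre majoration, elle, est correcte (elle se lit d\'ej\`a sur votre cha\^{\i}ne $\dim\ti{Z}\le s_{k}(x)\le d$), et la r\'eduction au cas irr\'eductible que vous esquissez est superflue : l'argument ci-dessus s'applique directement \`a~$X$ \'equidimensionnel, composante de~$\ti{X}$ par composante de~$\ti{X}$.
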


\begin{lem}\label{lem:Shilov}
Soit~$Y$ un espace $k$-affino\"{\i}de \'equidimensionnel d'alg\`ebre~$\Bs$. Soit~$P(T)$ un polyn\^ome unitaire non constant \`a coefficients dans~$\Bs$ et soit~$r>0$. Consid\'erons le domaine affino\"{\i}de~$X$ de~$\E{1}{Y}$ d\'efini par $\{|P(T)|\le r\}$. Notons~$\Gamma_{Y}$ le bord de Shilov de~$Y$. Pour tout point~$\gamma$ de~$\Gamma_{Y}$, notons~$\Gamma_{\gamma}$ le bord de Shilov de la fibre~$X_{\gamma}$. Alors le bord de Shilov de~$X$ est
\[\Gamma_{X} = \bigcup_{\gamma\in\Gamma_{Y}} \Gamma_{\gamma}.\] 
\end{lem}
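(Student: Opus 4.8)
Je r\'ealiserais d'abord $X$ comme image r\'eciproque d'un disque relatif par un morphisme fini au-dessus de $Y$, de fa\c{c}on \`a ramener le calcul de $\Gamma_{X}$ \`a celui d'images r\'eciproques de points, puis j'identifierais ces images r\'eciproques aux $\Gamma_{\gamma}$. Posons $n=\dim_{k}(Y)$. Comme $Y$ est \'equidimensionnel, $\E{1}{Y}$ l'est aussi, de dimension $n+1$, et donc son domaine affino\"{\i}de $X$ \'egalement ; pour $\gamma\in Y$, en notant $P_{\gamma}$ l'image de $P$ dans $\Hs(\gamma)[T]$, la fibre $X_{\gamma}=\{t\in\E{1}{\Hs(\gamma)}\mid|P_{\gamma}(t)|\le r\}$ est de m\^eme \'equidimensionnelle de dimension $1$. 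Le bord de Shilov \'etant insensible au passage \`a l'alg\`ebre r\'eduite, on peut supposer $\Bs$, et donc l'alg\`ebre $\As$ de $X$, r\'eduite. Comme $P$ est unitaire, $X$ est un domaine de Weierstrass d'un disque $\D^1_{Y}(R)$ pour $R$ assez grand, et l'application $S\mapsto P(T)$ d\'efinit un morphisme fini $q\colon X\to Z:=\D^1_{Y}(r)$ — l'alg\`ebre de $X$ \'etant libre de rang $d=\deg P$ sur celle de $Z$, par division de Weierstrass par $S-P(T)$. Le disque relatif $Z$ est \'equidimensionnel de dimension $n+1$, de m\^eme que $X$.

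Le fait-clef est le suivant : si $q\colon X'\to Z'$ est un morphisme fini entre espaces $k$-affino\"{\i}des \'equidimensionnels de m\^eme dimension, alors $\Gamma_{X'}=q^{-1}(\Gamma_{Z'})$ ; et, chaque fibre $q^{-1}(\delta)$ \'etant le spectre d'une $\Hs(\delta)$-alg\`ebre finie, donc un ensemble fini de points \'egal \`a son propre bord de Shilov, on a aussi $\Gamma_{X'}=\bigcup_{\delta\in\Gamma_{Z'}}\Gamma_{q^{-1}(\delta)}$. Pour l'\'etablir, on passe aux r\'eductions gradu\'ees : d'apr\`es~\cite{TemkinII}, proposition~3.3, la r\'eduction $x\mapsto\ti x$ identifie le bord de Shilov de $X'$ (resp. $Z'$) \`a l'ensemble des points g\'en\'eriques de $\ti{X'}$ (resp. $\ti{Z'}$) ; or la r\'eduction $\ti q\colon\ti{X'}\to\ti{Z'}$ est finie, et $\ti{X'}$, $\ti{Z'}$ sont \'equidimensionnels de m\^eme dimension (analogue gradu\'e du lemme~\ref{lem:equidim}). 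Comme un morphisme fini pr\'eserve la dimension, et que dans un espace \'equidimensionnel les ferm\'es irr\'eductibles de dimension maximale sont exactement les composantes, le point $\ti x$ est g\'en\'erique dans $\ti{X'}$ si, et seulement si, $\ti q(\ti x)$ l'est dans $\ti{Z'}$ ; on en d\'eduit aussit\^ot $\Gamma_{X'}=q^{-1}(\Gamma_{Z'})$.

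Appliquons le fait-clef \`a $q\colon X\to Z$ : $\Gamma_{X}=q^{-1}(\Gamma_{Z})$. Le bord de Shilov du disque relatif $Z=\D^1_{Y}(r)$ se calcule directement : c'est $\{\gamma_{r}\mid\gamma\in\Gamma_{Y}\}$, o\`u $\gamma_{r}$ est le point au-dessus de $\gamma$ associ\'e \`a la semi-norme $\sum_{m}c_{m}S^m\mapsto\max_{m}|c_{m}(\gamma)|\,r^m$ — l'\'egalit\'e $\|\sum_{m}b_{m}S^m\|_{\sup}=\max_{m}\|b_{m}\|_{\sup}\,r^m$ montre que c'est un bord, et sa minimalit\'e vient de ce que la projection sur $Y$ d'un bord de $Z$ est un bord de $Y$. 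Ainsi $\Gamma_{X}=\bigcup_{\gamma\in\Gamma_{Y}}q^{-1}(\gamma_{r})$. Enfin, dans la fibre $Z_{\gamma}\simeq\D^1_{\Hs(\gamma)}(r)$ de $Z\to Y$, le point $\gamma_{r}$ n'est autre que le point de Gauss $\eta_{r}$, unique point de $\Gamma_{Z_{\gamma}}$ ; par changement de base le long de $\Mc(\Hs(\gamma))\to Y$, le morphisme $q$ induit $P_{\gamma}\colon X_{\gamma}\to Z_{\gamma}$ et $q^{-1}(\gamma_{r})$ est la fibre de $P_{\gamma}$ au-dessus de $\eta_{r}$. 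Comme $X_{\gamma}$ et $Z_{\gamma}$ sont \'equidimensionnels de dimension $1$, le fait-clef fournit $\Gamma_{\gamma}=P_{\gamma}^{-1}(\eta_{r})=q^{-1}(\gamma_{r})$, d'o\`u finalement $\Gamma_{X}=\bigcup_{\gamma\in\Gamma_{Y}}\Gamma_{\gamma}$.

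La difficult\'e principale me semble \^etre l'analogue gradu\'e de la pr\'eservation de l'\'equidimensionnalit\'e et de la dimension par r\'eduction, le lemme~\ref{lem:equidim} n'\'etant \'enonc\'e que dans le cas strict : on peut soit l'extraire de la th\'eorie de~\cite{TemkinII}, \S~3, soit se ramener au cas strict par une extension des scalaires \`a un corps $k_{\bs}$ convenable, en contr\^olant l'effet sur les bords de Shilov comme au lemme~\ref{lem:extkr}. Les autres points — division de Weierstrass, description du bord de Shilov du disque relatif, identification des fibres — sont de routine.
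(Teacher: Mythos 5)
Your proposal is correct and follows essentially the same route as the paper's proof: the finite morphism from $X$ to the relative disc $\D^1_{Y}(r)$ induced by $P$, the characterization of Shilov points as the points whose reduction is generic, the invariance of (transcendence) dimension under finite morphisms combined with equidimensionality of the reduction, the explicit determination of the Shilov boundary of the relative disc, and the fiberwise identification. The only divergence is that you argue primarily with Temkin's graded reductions, whereas the paper first extends scalars to a well-chosen $k_{\br}$ so as to reduce to the strictly affinoid case and then invokes the classical reduction (Berkovich's proposition~2.4.4, BGR~6.3.5/1) together with Lemma~\ref{lem:equidim} --- which is exactly the fallback you yourself indicate.
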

\begin{proof}
Quitte \`a \'etendre les scalaires \`a un corps~$k_{\br}$, o\`u~$\br$ d\'esigne un polyrayon $k$-libre bien choisi, nous pouvons supposer que les espaces~$X$ et~$Y$ sont strictement $k$-affino\"{\i}des et que la valuation du corps~$k$ n'est pas triviale. Cette op\'eration pr\'eserve le caract\`ere \'equidimensionnel de~$Y$, comme on le voit en se ramenant au cas irr\'eductible et r\'eduit et en constatant que l'alg\`ebre~$\Bs\ho_{k} k_{\br}$ est alors int\`egre.

Notons~$Z$ le domaine strictement affino\"{\i}de de~$\E{1}{Y}$ d\'efini par $\{|T|\le r\}$. Une description explicite de l'alg\`ebre de~$Z$ montre que son bord de Shilov~$\Gamma_{Z}$ n'est autre que $\Gamma_{Z} =\{z_{\gamma},\ \gamma\in\Gamma_{Y}\}$, o\`u, pour tout~$\gamma\in \Gamma_{Y}$, $z_{\gamma}$~d\'esigne l'unique point du bord de Shilov de la fibre~$Z_{\gamma}$. Consid\'erons le morphisme fini $\varphi : X \to Z$ d\'efini par le polyn\^ome~$P$. D'apr\`es~\cite{BGR}, 6.3.5/1, le morphisme induit $\ti{\varphi} : \ti{X} \to \ti{Z}$ est fini. 

Puisque~$X$ est strictement $k$-affino\"{\i}de, d'apr\`es~\cite{rouge}, proposition~2.4.4, un point~$x$ appartient \`a son bord de Shilov~$\Gamma_{X}$ si, et seulement si, sa r\'eduction~$\ti{x}$ est un point g\'en\'erique de~$\ti{X}$, donc, d'apr\`es le lemme qui pr\'ec\`ede, si, et seulement si, le degr\'e de transcendance sur~$\ti{k}$ du corps~$\ti{k}(\ti{x})$ est \'egal \`a~$\dim(X)$. Le m\^eme r\'esultat vaut pour~$Z$. En utilisant le fait qu'un morphisme fini pr\'eserve le degr\'e de transcendance, on en d\'eduit que $\Gamma_{X} = \varphi^{-1}(\Gamma_{Z})$, puis le r\'esultat attendu.
\end{proof}

Par les m\^emes m\'ethodes, on peut d\'emontrer un r\'esultat d\'ecrivant le bord de Shilov d'un ferm\'e de Zariski d'une droite relative. 

\begin{lem}
Soit~$Y$ un espace $k$-affino\"{\i}de \'equidimensionnel d'alg\`ebre~$\Bs$. Soit~$P(T)$ un polyn\^ome unitaire non constant \`a coefficients dans~$\Bs$. Consid\'erons le ferm\'e de Zariski~$X$ de~$\E{1}{Y}$ d\'efini par $\{P(T)=0\}$. Notons~$\Gamma_{Y}$ le bord de Shilov de~$Y$. Alors le bord de Shilov de~$X$ est
\[\Gamma_{X} = \bigcup_{\gamma\in\Gamma_{Y}} X_{\gamma}.\]
\end{lem}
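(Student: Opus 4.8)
The plan is to imitate the proof of Lemma~\ref{lem:Shilov}, in an even simpler form, since the relevant morphism is here already finite. Denote by $\varphi : X \to Y$ the morphism induced by the structural projection $\E{1}{Y} \to Y$; as $P(T)$ is monic, the algebra of~$X$ is a free $\Bs$-module of rank $\deg(P)$, so $\varphi$ is finite, flat and surjective. I would first observe that this forces~$X$ to be equidimensional of the same dimension~$d$ as~$Y$ (one may argue directly on the algebras, or invoke the dimension theory of~\cite{variationdimension}), and that the right-hand side $\bigcup_{\gamma\in\Gamma_{Y}} X_{\gamma}$ of the claimed formula is nothing but $\varphi^{-1}(\Gamma_{Y})$. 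So the point is to prove $\Gamma_{X} = \varphi^{-1}(\Gamma_{Y})$.

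Exactly as in Lemma~\ref{lem:Shilov}, the next step is to extend scalars to a field~$k_{\br}$ for a suitably chosen $k$-free polyradius~$\br$, so as to reduce to the case where~$X$ and~$Y$ are strictly $k$-affino\"{\i}des and the valuation of~$k$ is non-trivial. One checks, verbatim as there, that this base change preserves the equidimensionality of~$Y$ (reduce to the reduced irreducible case, where~$\Bs\ho_{k} k_{\br}$ remains a domain) and hence of~$X$, and that it is compatible with the formation of Shilov boundaries and with~$\varphi$, so that it suffices to treat the situation after this base change.

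Once~$X$ and~$Y$ are strictly $k$-affino\"{\i}des, the argument is the one of Lemma~\ref{lem:Shilov}: the reduction $\ti{\varphi} : \ti{X} \to \ti{Y}$ is finite by~\cite{BGR}, 6.3.5/1; by Lemma~\ref{lem:equidim} the reductions~$\ti{X}$ and~$\ti{Y}$ are equidimensional of dimension~$d$; by~\cite{rouge}, proposition~2.4.4, a point of a strictly $k$-affino\"{\i}de space lies in its Shilov boundary if and only if its reduction is a generic point of the reduction, which, by equidimensionality, amounts to the corresponding residue field having transcendence degree~$d$ over~$\ti{k}$. Since~$\ti{\varphi}$ is finite it preserves the transcendence degree of residue fields, and the reduction commutes with~$\varphi$; hence a point~$x$ of~$X$ lies in~$\Gamma_{X}$ if and only if~$\varphi(x)$ lies in~$\Gamma_{Y}$, i.e. $\Gamma_{X} = \varphi^{-1}(\Gamma_{Y}) = \bigcup_{\gamma\in\Gamma_{Y}} X_{\gamma}$. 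The main thing to watch is the compatibility of the reduction step to~$k_{\br}$ with Shilov boundaries and with the identification $X_{\gamma} = \varphi^{-1}(\gamma)$; this is the part I expect to be the most delicate to write down, but it is handled exactly as in the proof of Lemma~\ref{lem:Shilov}, and everything else is routine.
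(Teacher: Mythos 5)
Your proposal is correct and matches the paper's intention exactly: the paper gives no separate argument for this lemma, stating only that it follows \og par les m\^emes m\'ethodes \fg\ que le lemme pr\'ec\'edent, and your proof is precisely that adaptation (base change to a suitable $k_{\br}$, finiteness of the reduction $\ti{\varphi}$, characterization of the Shilov boundary by generic points and transcendence degree), simplified by the fact that $\varphi\colon X\to Y$ is already finite so no auxiliary relative disc is needed.
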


\begin{lem}
Soient~$r>0$ et~$x$ un point de type~$2$ ou~$3$ de~$\D^1(r)$. Il existe un polyn\^ome~$P(T)$ \`a coefficients dans~$k$ tel que le domaine affino\"{\i}de d\'efini par $\{|P(T)|\le |P(T)(x)|\}$ ait un bord de Shilov r\'eduit au point~$x$. 
\end{lem}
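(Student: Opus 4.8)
Le plan est de construire un tel polyn\^ome explicitement \`a partir de la description de~$x$ apr\`es extension des scalaires \`a une cl\^oture alg\'ebrique, puis de conclure par un argument de semi-norme spectrale. Notons~$F$ le compl\'et\'e d'une cl\^oture alg\'ebrique de~$k$ et $\pi:\D^1_{F}(r)\to\D^1_{k}(r)$ le morphisme de projection. Le point~$x$ \'etant de type~$2$ ou~$3$, il admet un ant\'ec\'edent~$\xi$ par~$\pi$ \'egal au point de Gauss~$\eta_{a,\rho}$ d'un disque ferm\'e $\{|T-a|\le\rho\}$ de~$\D^1_{F}(r)$ --- c'est-\`a-dire \`a la semi-norme $\sum_{i}c_{i}(T-a)^{i}\mapsto\max_{i}(|c_{i}|\,\rho^{i})$ --- o\`u $\rho>0$ et o\`u l'on peut, quitte \`a remplacer~$a$ par un \'el\'ement proche, supposer~$a$ alg\'ebrique sur~$k$ (ce fait r\'esulte de la description des points de la droite analytique ; alternativement, on v\'erifie directement, gr\^ace \`a l'in\'egalit\'e d'Abhyankar, que tout ant\'ec\'edent de~$x$ est encore de type~$2$ ou~$3$, et l'on conclut via la classification sur le corps alg\'ebriquement clos~$F$). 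Soit~$P(T)\in k[T]$ le polyn\^ome minimal unitaire de~$a$ sur~$k$ et posons $c=|P(x)|$ ; comme~$\xi$ se trouve au-dessus de~$x$, on a aussi $c=|P(\xi)|$. Le candidat sera le domaine affino\"{\i}de
\[X=\{x'\in\D^1_{k}(r)\,|\,|P(x')|\le c\},\]
qui contient~$x$ puisque $|P(x)|=c$.

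La deuxi\`eme \'etape consiste \`a d\'ecrire l'extension des scalaires~$X_{F}$. \'Ecrivons $P=\prod_{j=1}^{m}(T-b_{j})^{q}$ dans~$F[T]$, o\`u les~$b_{j}$ sont les racines distinctes de~$P$, c'est-\`a-dire les conjugu\'es de~$a$ (transitivement permut\'es par $\mathrm{Aut}(F/k)$), et o\`u~$q$ est une puissance de l'exposant caract\'eristique de~$k$. Regroupons les~$b_{j}$ selon la relation $|b_{j}-b_{j'}|\le\rho$ ; \`a chaque classe~$C$ est alors associ\'e un disque ferm\'e $D_{C}=\{|T-b|\le\rho\}$ de~$\D^1_{F}(r)$ (pour $b\in C$ quelconque), de point de Gauss~$\gamma_{C}$, et ces disques sont deux \`a deux disjoints. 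Puisque~$P$ est \`a coefficients dans~$k$ et que les~$\gamma_{C}$ sont conjugu\'es \`a~$\xi$, la fonction~$|P|$ prend en chacun d'eux la m\^eme valeur, \`a savoir~$c$. Un calcul ultram\'etrique \'el\'ementaire, s\'eparant les cas $z\in D_{C}$ et $z\notin\bigcup_{C}D_{C}$ et exploitant le caract\`ere strict de l'in\'egalit\'e $\rho<|b_{j}-b_{j'}|$ entre classes distinctes, montre alors que
\[X_{F}=\bigsqcup_{C}D_{C}.\]
On v\'erifiera au passage que chaque~$D_{C}$ est bien inclus dans~$\D^1_{F}(r)$, ce qui vient de ce que~$\gamma_{C}$, \'etant conjugu\'e \`a~$\xi$, se trouve au-dessus de~$x$, donc que~$T$ y est de module $|T(x)|\le r$.

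Il ne reste plus qu'\`a conclure. Le bord de Shilov d'un disque ferm\'e \'etant r\'eduit \`a son point de Gauss, celui de~$X_{F}$ est l'ensemble fini $\{\gamma_{C}\}_{C}$, dont tous les \'el\'ements s'envoient sur~$x$ par~$\pi$. Or~$\pi$ est surjective et, pour toute fonction~$f$ d\'efinie sur~$k$ et tout point~$x'\in X_{F}$, on a $|f(x')|=|f(\pi(x'))|$ ; notant~$\As$ l'alg\`ebre affino\"{\i}de de~$X$, on en d\'eduit, pour tout~$f\in\As$,
\[\max_{x'\in X}|f(x')|=\max_{x'\in X_{F}}|f(x')|=\max_{C}|f(\gamma_{C})|=|f(x)|.\]
Ainsi~$\{x\}$ est une partie ferm\'ee de~$X$ sur laquelle tout \'el\'ement de~$\As$ atteint sa borne sup\'erieure ; par minimalit\'e du bord de Shilov, qui est de surcro\^{\i}t non vide, celui-ci est donc exactement~$\{x\}$, comme souhait\'e.

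La principale difficult\'e de cette approche est l'\'egalit\'e $X_{F}=\bigsqcup_{C}D_{C}$ : g\'eom\'etriquement transparente, elle requiert un calcul ultram\'etrique que les \'eventuelles multiplicit\'es rendent quelque peu technique, et elle repose en amont sur le fait qu'un point de type~$2$ ou~$3$ se rel\`eve, sur~$F$, en le point de Gauss d'un disque de centre alg\'ebrique sur~$k$.
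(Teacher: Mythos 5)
Your proof is correct, but the way you verify the Shilov property differs from the paper's, even though the polynomial chosen is essentially the same in both arguments. The paper picks a rigid point $y$ in a connected component of $\D^1(r)\setminus\{x\}$ not containing $\eta_{r}$ (the case $x=\eta_{r}$ being immediate), takes for $P$ the irreducible polynomial cutting out $y$, and then checks the claim directly over $k$: for an irreducible $Q\in k[T]$, the function $z\mapsto |Q(T)(z)|$ increases strictly along the segment from the zero of $Q$ to $\infty$ and is locally constant elsewhere, and one concludes using the density of the polynomials in the algebra of $\D^1(r)$. You instead pass to $F=\widehat{\bar{k}}$, write a lift of $x$ as $\eta_{a,\rho}$ with $a$ algebraic over $k$ (legitimate: any lift of a type 2 or 3 point is again of type 2 or 3 by your Abhyankar argument, and $\bar{k}$ is dense in $F$, so the center may be moved within distance $\rho$), take for $P$ the minimal polynomial of $a$, identify $X_{F}$ with the disjoint union of the conjugate discs $D(b_{j},\rho)$, and descend via the surjectivity of $\pi$, the invariance of $|f|$ for $f$ defined over $k$, and the minimality of the Shilov boundary. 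The ultrametric computation you defer is indeed elementary and creates no gap: if $|(T-b_{j})(z)|>\rho$ for every $j$, choosing $j_{1}$ minimizing these quantities gives $|(T-b_{j})(z)|\ge \max(|b_{j}-b_{j_{1}}|,\rho)$ for all $j$, with strict inequality at $j=j_{1}$, whence $|P(z)|>|P(\eta_{b_{j_{1}},\rho})|=|P(x)|$; conversely $|P|\le |P(x)|$ on each $D(b_{j},\rho)$ since $\eta_{b_{j},\rho}$ lies above $x$. In sum, your route is longer and requires the inseparability bookkeeping and the descent step, but it makes the geometry of $\{|P(T)|\le |P(T)(x)|\}$ completely explicit after extension of scalars; the paper's argument stays over $k$ and is shorter, at the price of leaving the final verification (monotonicity plus density of polynomials) to the reader.
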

\begin{proof}
Si le point~$x$ est l'unique point~$\eta_{r}$ du bord de Shilov du disque, le r\'esultat est imm\'ediat. Sinon, consid\'erons une composante connexe de $\D^1(r)\setminus\{x\}$ ne contenant pas~$\eta_{r}$. C'est un ouvert de~$\D^1(r)$ qui contient un point rigide~$y$ d\'efini par l'annulation d'un polyn\^ome irr\'eductible~$P(T)$ \`a coefficients dans~$k$. Remarquons que le point~$x$ se trouve sur l'unique chemin joignant les points~$y$ et~$\eta_{r}$. 

Soit~$Q(T)$ un polyn\^ome irr\'eductible \`a coefficients dans~$k$. Rappelons que nous savons comment se comporte la valeur absolue~$|Q(T)(z)|$ lorsque le point~$z$ varie sur la droite~$\P^{1,\mathrm{an}}_{k}$. Notons~$q$ l'unique point de la droite en lequel~$Q(T)$ s'annule. Alors la valeur absolue~$|Q(T)(z)|$ cro\^{\i}t strictement lorsque le point~$z$ parcourt le segment~$[q,\infty]$ et est localement constante sur son compl\'ementaire. En utilisant ce fait ainsi que la densit\'e des polyn\^omes dans l'alg\`ebre du disque~$\D^1(r)$, on v\'erifie que~$x$ est l'unique point du bord de Shilov du domaine affino\"{\i}de \mbox{$\{|P(T)|\le |P(T)(x)|\}$}.
\end{proof}

\begin{prop}\label{prop:s+tmax}
Soient~$p\in\N$ et~$\br \in (\R_{+}^*)^p$. Soit~$x$ un point d'Abhyankar de~$\D^p_{k}(\br)$. Alors il existe un domaine de Weierstra\ss~$V$ de~$\D^p_{k}(\br)$ dont le bord de Shilov est r\'eduit au point~$x$. En outre, le domaine affino\"{\i}de~$V$ peut \^etre d\'efini sur un sous-corps de~$k$ de type fini sur~$k_{p}$.
\end{prop}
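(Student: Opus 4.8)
Je raisonnerais par r\'ecurrence sur~$p$, le cas $p=0$ \'etant trivial ($\D^0_{k}=\Mc(k)$, dont l'unique point est d'Abhyankar, et $V=\Mc(k)$ convient, d\'efini sur~$k_{p}$). Pour le pas de r\'ecurrence, je consid\'ererais la projection $\pi:\D^p_{k}(\br)\to\D^{p-1}_{k}(\br')$ oubliant la derni\`ere coordonn\'ee (o\`u $\br'=(r_{1},\dots,r_{p-1})$) et je poserais $y=\pi(x)$ ; la fibre $\pi^{-1}(y)$ s'identifie au disque $\D^1_{\Hs(y)}(r_{p})$. Comme $\Hs(y)$ se plonge isom\'etriquement dans~$\Hs(x)$, l'additivit\'e du degr\'e de transcendance et du rang dans les tours donne $s_{k}(x)=s_{k}(y)+s_{\Hs(y)}(x)$ et $t_{k}(x)=t_{k}(y)+t_{\Hs(y)}(x)$ ; conjugu\'ee aux in\'egalit\'es d'Abhyankar $s_{k}(y)+t_{k}(y)\le p-1$ et $s_{\Hs(y)}(x)+t_{\Hs(y)}(x)\le 1$, l'hypoth\`ese $s_{k}(x)+t_{k}(x)=p$ force ces deux in\'egalit\'es \`a \^etre des \'egalit\'es. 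Ainsi~$y$ est un point d'Abhyankar de~$\D^{p-1}_{k}(\br')$ et~$x$ est un point d'Abhyankar, donc de type~$2$ ou~$3$, de~$\D^1_{\Hs(y)}(r_{p})$.

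L'hypoth\`ese de r\'ecurrence fournirait alors un domaine de Weierstra\ss~$W$ de~$\D^{p-1}_{k}(\br')$, d'alg\`ebre~$\Bs$, tel que $\Gamma_{W}=\{y\}$ et d\'efini sur un sous-corps~$\ell_{0}$ de~$k$ de type fini sur~$k_{p}$ ; puisque $\Gamma_{W}$ est un singleton, la semi-norme spectrale de~$\Bs$ co\"{\i}ncide avec la semi-norme multiplicative~$|.|_{y}$, dont le noyau, le nilradical de~$\Bs$, est donc premier, de sorte que~$W$ est irr\'eductible, en particulier \'equidimensionnel.

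Il s'agirait ensuite de fabriquer, au-dessus de~$W$, un domaine de bord de Shilov~$\{x\}$. En appliquant au point~$x$ de~$\D^1_{\Hs(y)}(r_{p})$, sur le corps~$\Hs(y)$, le lemme pr\'ec\'edent sur les disques de dimension~$1$ (dont la preuve vaut sur un corps valu\'e complet quelconque), on obtiendrait un polyn\^ome unitaire non constant $Q(T_{p})\in\Hs(y)[T_{p}]$ tel que le domaine $\{|Q(T_{p})|\le c\}$, avec $c=|Q(T_{p})(x)|>0$, ait pour bord de Shilov~$\{x\}$ ; les racines de~$Q$ \'etant de valeur absolue~$\le r_{p}$, ce domaine est contenu dans~$\{|T_{p}|\le r_{p}\}$. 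On rel\`everait~$Q$ en un polyn\^ome unitaire $P(T_{p})$ \`a coefficients dans~$\Bs$ dont l'image dans~$\Hs(y)[T_{p}]$ soit assez voisine de~$Q$ pour que, par ultram\'etrie, le domaine $\{|P(T_{p})|\le c\}$ co\"{\i}ncide avec $\{|Q(T_{p})|\le c\}$ au-dessus de~$y$ et reste contenu dans~$\{|T_{p}|\le r_{p}\}$ au-dessus de~$W$ (ce dernier point utilisant que la norme spectrale sur~$W$ d'un coefficient de~$P$ est sa valeur en~$y$, car $\Gamma_{W}=\{y\}$). On poserait enfin $V=\pi^{-1}(W)\cap\{|P(T_{p})|\le c\}$ : c'est l'intersection de deux domaines de Weierstra\ss\ de~$\D^p_{k}(\br)$, donc un domaine de Weierstra\ss, et vu comme domaine de~$\E{1}{W}$ il est \'egal \`a~$\{|P(T_{p})|\le c\}$ ; le lemme~\ref{lem:Shilov}, appliqu\'e \`a~$W$ (\'equidimensionnel) et au polyn\^ome unitaire~$P(T_{p})$, donnerait alors $\Gamma_{V}=\bigcup_{\gamma\in\Gamma_{W}}\Gamma_{\gamma}=\Gamma_{y}$, bord de Shilov de~$\{|Q(T_{p})|\le c\}$, c'est-\`a-dire~$\{x\}$.

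La difficult\'e principale me semble \^etre le relev\`ement de~$Q$ et la descente sur un petit corps. Le polyn\^ome~$Q$ a ses coefficients dans~$\Hs(y)$, qui n'est que le compl\'et\'e du corps des fractions de l'anneau int\`egre~$\Bs$, et rien ne garantit a priori que l'on puisse choisir un polyn\^ome encore \emph{unitaire} dont les coefficients soient dans~$\Bs$ (l'image de~$\Bs$ n'\'etant pas dense dans~$\Hs(y)$) : il faudra sans doute, soit autoriser des polyn\^omes non unitaires en g\'en\'eralisant le lemme~\ref{lem:Shilov} \`a de tels polyn\^omes, soit exploiter la structure de~$\Hs(y)$ fournie par l'universalit\'e du point~$y$. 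Une fois un tel~$P$ obtenu \`a coefficients dans~$\Bs=\Bs_{\ell_{0}}\ho_{\ell_{0}}k$, il resterait, $\Bs_{\ell_{0}}\otimes_{\ell_{0}}k$ \'etant dense dans~$\Bs$, \`a approcher ces coefficients par des \'el\'ements ne faisant intervenir qu'un nombre fini de scalaires de~$k$, de sorte qu'en adjoignant ceux-ci \`a~$\ell_{0}$ on obtienne un sous-corps~$\ell$ de~$k$ encore de type fini sur~$k_{p}$ sur lequel~$V$ est d\'efini, la petite perturbation correspondante ne changeant pas le bord de Shilov gr\^ace \`a l'argument ultram\'etrique ci-dessus.
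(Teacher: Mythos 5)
Votre strat\'egie est pour l'essentiel celle du texte : r\'ecurrence sur~$p$, projection sur les premi\`eres coordonn\'ees, additivit\'e de~$s$ et~$t$ jointe \`a l'in\'egalit\'e d'Abhyankar pour voir que $y=\pi(x)$ est un point d'Abhyankar et que~$x$ est de type~$2$ ou~$3$ dans la fibre $\D^1_{\Hs(y)}(r_{p})$, hypoth\`ese de r\'ecurrence fournissant~$W$ de bord de Shilov~$\{y\}$, lemme en dimension~$1$ dans la fibre, puis lemme~\ref{lem:Shilov} pour conclure sur $V=\pi^{-1}(W)\cap\{|\cdot|\le\cdot\}$. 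Mais l'\'etape que vous signalez vous-m\^eme comme non r\'esolue --- relever le polyn\^ome de $\Hs(y)[T_{p}]$ en un polyn\^ome \emph{unitaire} \`a coefficients dans~$\Bs$ --- est pr\'ecis\'ement le point crucial, et votre d\'emonstration reste lacunaire \`a cet endroit : comme vous le notez, l'image de~$\Bs$ n'est pas dense dans~$\Hs(y)$ (seul le corps des fractions $k(\p_{y})$ l'est), de sorte qu'un rel\`evement unitaire proche n'a aucune raison d'exister, et aucune des deux issues que vous sugg\'erez n'est men\'ee \`a bien.

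La solution du texte est plus \'el\'ementaire que ce que vous envisagez : on ne demande ni que les coefficients soient dans~$\Bs$, ni que le polyn\^ome reste unitaire. Chaque coefficient du polyn\^ome fourni par le lemme en dimension~$1$ est limite de quotients d'\'el\'ements de $k[T_{1},\ldots,T_{p-1}]$ ; en choisissant des approximations assez fines de d\'enominateur commun $q\in k[T_{1},\ldots,T_{p-1}]$ avec $q(y)\ne 0$, l'in\'egalit\'e ultram\'etrique donne, dans la fibre au-dessus de~$y$, l'\'egalit\'e des domaines $\{|P|\le s\}$ et $\{|Q|\le |q(y)|\, s\}$, o\`u~$Q$ est \`a coefficients dans $k[T_{1},\ldots,T_{p-1}]$. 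On pose alors $V=\pi^{-1}(W)\cap\{|Q|\le |q(y)|\, s\}$, qui est bien un domaine de Weierstra\ss, et on conclut par le lemme~\ref{lem:Shilov} ; c'est en substance votre premi\`ere alternative (le polyn\^ome~$Q$ obtenu n'est plus unitaire, son coefficient dominant \'etant~$q$, qui ne s'annule pas en~$y$), rendue effective par la multiplication par le d\'enominateur. Cette construction r\`egle du m\^eme coup l'assertion finale : seuls interviennent dans~$Q$ et~$q$ un nombre fini d'\'el\'ements de~$k$, donc~$V$ est d\'efini sur un sous-corps de type fini sur~$k_{p}$, sans l'\'etape d'approximation suppl\'ementaire de votre dernier paragraphe. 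Vos remarques interm\'ediaires (\'equidimensionnalit\'e de~$W$, inclusion dans $\{|T_{p}|\le r_{p}\}$) sont correctes et compatibles avec cette variante.
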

\begin{proof}
D\'emontrons ce r\'esultat par r\'ecurrence sur l'entier~$p$. Si $p=0$, c'est \'evident.

Supposons le r\'esultat vrai pour~$p$ et d\'emontrons-le pour~$p+1$. Soit $\br = (r_{1},\ldots,r_{p+1}) \in (\R_{+}^*)^{p+1}$. Posons $\br'=(r_{1},\ldots,r_{p})$. Soit~$x$ un point d'Abhyankar de~$\D^{p+1}_{k}(\br)$. Consid\'erons le morphisme de projection sur les~$p$ premi\`eres coordonn\'ees $\pi : \D^{p+1}_{k}(\br) \to \D^p_{k}(\br')$. Posons $y=\pi(x)$. 

Nous avons $s(x) = s(\Hs(x)/\Hs(y)) + s(y)$ et $t(x) = t(\Hs(x)/\Hs(y)) + t(y)$. L'in\'egalit\'e d'Abhyankar assure que $s(y)+t(y)\le p$  et que $s(\Hs(x)/\Hs(y)) + t(\Hs(x)/\Hs(y)) \le 1$ (en l'appliquant au point~$y$ de~$\D^1_{\Hs(y)}(r_{p+1})$). On en d\'eduit que~$y$ est un point d'Abhyankar de~$\D^{p}_{k}(\br')$ et que~$x$ est un point d'Abhyankar (c'est-\`a-dire un point de type~$2$ ou~$3$) de $\D^1_{\Hs(y)}(r_{p+1})$.

D'apr\`es l'hypoth\`ese de r\'ecurrence, il existe un domaine de Weierstra\ss~$W$ de~$\D^p_{k}(\br)$ dont le bord de Shilov est r\'eduit au point~$y$. Notons~$\As_{W}$ l'alg\`ebre affino\"{\i}de associ\'ee. D'apr\`es le lemme qui pr\'ec\`ede, il existe un polyn\^ome~$P$ \`a coefficients dans~$\Hs(y)$ et un nombre r\'eel~\mbox{$s>0$} tels que le domaine affino\"{\i}de de~$\pi^{-1}(y)$ d\'efini par $\{|P|\le s\}$ ait un bord de Shilov r\'eduit au point~$x$. 

Chacun des coefficients du polyn\^ome~$P$ est limite de quotients d'\'el\'ements de $k\{T_{1},\ldots,T_{p}\}$ et m\^eme de $k[T_{1},\ldots,T_{p}]$. Il existe donc un polyn\^ome~$Q$ \`a coefficients dans~$k[T_{1},\ldots,T_{p}]$ et un \'el\'ement~$q$ de~$k[T_{1},\ldots,T_{p}]$, non nul en~$y$, tels que
\[\pi^{-1}(y)\cap \{|P|\le s\} = \pi^{-1}(y) \cap \{|Q|\le |q|s\} = \pi^{-1}(y) \cap \{|Q|\le |q(y)|s\}.\]

D\'efinissons un domaine affino\"{\i}de de~$\D^{p+1}_{k}(\br)$ par
\[V = \pi^{-1}(W) \cap \{|Q|\le |q(y)|s\}.\]
D'apr\`es le lemme~\ref{lem:Shilov}, le bord de Shilov de cet affino\"{\i}de est r\'eduit au point~$x$.

L'assertion finale de l'\'enonc\'e est claire puisque, \`a chaque \'etape, on ne fait intervenir dans la construction qu'un nombre fini d'\'el\'ements de~$k$, correspondant aux coefficients du polyn\^ome~$Q$.
\end{proof}

\begin{rem}
Les deux faits suivants d\'ecoulent de la preuve.
\begin{enumerate}[\it i)]
\item Le domaine de Weierstra\ss~$V$ peut \^etre d\'efini par~$p$ polyn\^omes.
\item Dans le cas o\`u le point~$x$ satisfait $s(x)=p$, le domaine de Weierstra\ss~$V$ peut \^etre choisi strictement affino\"{\i}de.
\end{enumerate}
\end{rem}

\begin{cor}\label{cor:staffs}
Soit~$X$ un espace strictement $k$-affino\"{\i}de irr\'eductible de dimension~$d$. Soit~$x$ un point de~$X$ tel que $s(x)=d$. Alors il existe un domaine rationnel strictement affino\"{\i}de de~$X$ dont le bord de Shilov est r\'eduit au point~$x$.  
\end{cor}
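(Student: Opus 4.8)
Corollaire~\ref{cor:staffs} — a proof proposal.

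The plan is to reduce the statement to Proposition~\ref{prop:s+tmax} by realizing $X$ as a Zariski closed subset of a polydisc in such a way that the point $x$ of maximal $s$-invariant becomes an \emph{Abhyankar} point of the ambient disc, and then use that the Abhyankar property (together with Remark~\ref{rem:p=0}) forces the relevant Weierstra\ss{} domain constructed in the ambient disc to miss the closed subset complement. More precisely, since $X$ is strictly $k$-affino\"{\i}de of dimension $d$, the Noether normalization for affino\"{\i}d algebras (\cite{BGR}, 6.1.2) gives a finite injection $k\{T_{1},\dots,T_{d}\}\hookrightarrow\As$, i.e. a finite morphism $\varphi:X\to\D^{d}_{k}(\mathbf{1})$. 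A finite morphism preserves transcendence degrees of residue fields, so if $s_{k}(x)=d$ then the image point $y=\varphi(x)$ satisfies $s_{k}(y)=d=\dim_{k,y}(\D^{d}_{k})$; thus $y$ is an Abhyankar point of the polydisc. Apply Proposition~\ref{prop:s+tmax} to $y$: there is a Weierstra\ss{} domain $U$ of $\D^{d}_{k}(\mathbf{1})$ whose Shilov boundary is the single point $y$, defined by $d$ polynomials. Pull $U$ back along $\varphi$: the preimage $V=\varphi^{-1}(U)$ is a rational (indeed, since $U$ is Weierstra\ss{} and $\varphi$ is defined by polynomials, a rational, and one checks, a Weierstra\ss) strictly $k$-affino\"{i}d domain of $X$, and $\varphi|_{V}:V\to U$ is still finite.

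The key remaining point is that the Shilov boundary of $V$ is exactly $\{x\}$. Since $\varphi|_{V}$ is finite and $U$ has the single Shilov point $y$, the Shilov boundary of $V$ is contained in the fiber $\varphi^{-1}(y)\cap V$, and every point of $\Gamma_{V}$ lies over $y$ and satisfies $s_{k}(\cdot)=\dim(V)=d$ (by the characterization via generic points of the reduction used already in the proof of Lemma~\ref{lem:Shilovstaff} and Lemma~\ref{lem:Shilov}, plus finiteness preserving transcendence degree). So it suffices to show that $x$ is the \emph{only} point of $V$ over $y$ with $s_{k}=d$. Here is where the Abhyankar property of $y$ enters decisively: by the Abhyankar inequality applied to points of the finite fiber $\varphi^{-1}(y)$ over $\Hs(y)$, a point $z$ of $X_{\Hs(y)}$ lying over $y$ has $s_{\Hs(y)}(z)+t_{\Hs(y)}(z)\le\dim(\varphi^{-1}(y))=0$ unless $z$ is rigid in its fiber. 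Since $s_{k}(x)=s_{\Hs(y)}(x)+s_{k}(y)$ and similarly for $t$, having $s_{k}(x)=d$ forces $s_{\Hs(y)}(x)=t_{\Hs(y)}(x)=0$, i.e.\ $x$ is a rigid point of the finite fiber $\varphi^{-1}(y)$, and the same holds for any other candidate $z$. Thus the candidates form a finite set of rigid points of the $0$-dimensional fiber, and we must still separate $x$ from the others.

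To do that separation, I would use the same trick as in the disc case: choose a polynomial (now in the variable of the last coordinate, viewing $X$ via a finite map $X\to\D^{d-1}_{k}(\mathbf{1})$ followed by a degree-$N$ equation) that vanishes at $x$ to higher order along the fiber than at its competitors, or more cleanly, intersect $V$ with a further rational subdomain $\{|g|\le|g(x)|\}$ for a suitable $g\in\As$ chosen (by density of the image of $k[\mathbf{T}]$ and Noether normalization) so that $|g|$ separates $x$ from the finitely many other rigid points in $\varphi^{-1}(y)\cap V$; since these are distinct rigid points of an affino\"{i}d, such a $g$ exists, and shrinking preserves the strictly-affino\"{i}d rational character. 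The one subtlety — and the place I expect the argument to need the most care — is checking that after this extra shrinking the Shilov boundary genuinely collapses to the singleton $\{x\}$ and not, say, to some boundary point newly created on the locus $\{|g|=|g(x)|\}$; this is handled exactly as in the final paragraph of the proof of the preceding lemma on discs (monotonicity of $|g|$ along the relevant skeleton segments and local constancy elsewhere), transported through the finite morphism $\varphi$, which preserves the relevant combinatorial structure of Shilov boundaries by Lemma~\ref{lem:Shilov}. Assembling these pieces gives a rational strictly $k$-affino\"{i}d domain of $X$ with Shilov boundary $\{x\}$, as required.
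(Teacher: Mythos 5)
Your first half coincides with the paper's argument: Noether normalization gives a finite $\varphi:X\to\D^d_{k}$, $s(\varphi(x))=s(x)=d$, one takes the strictly affino\"{\i}de Weierstra\ss\ domain $V_{0}$ of the disc with Shilov boundary $\{\varphi(x)\}$ (note that you need the remark following the proposition~\ref{prop:s+tmax}, case $s=p$, to guarantee strictness), and $V=\varphi^{-1}(V_{0})$ has Shilov boundary equal to the finite fiber $\varphi^{-1}(\varphi(x))$. But the last step --- collapsing this finite Shilov boundary to the single point $x$ --- is exactly where your argument has a genuine gap. First, the reduction to ``$x$ is the only point of $V$ over $y$ with $s_{k}=d$'' cannot work: every point of the fiber has residue field finite over $\Hs(y)$, hence $s_{k}=d$; you then correctly retreat to ``separating $x$ from the others'', but the separation you propose is not carried out. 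The cut $V\cap\{|g|\le|g(x)|\}$ is in the wrong direction if $|g|$ is smaller at the other Shilov points (it then removes nothing), and if you arrange $|g|$ to be larger there you must both prove such a $g$ exists and, more seriously, show that no new Shilov points appear on the locus $\{|g|=|g(x)|\}$. You flag this yourself, but the justification offered --- monotonicity along ``skeleton segments'' as in the one-dimensional disc lemma, ``transported through $\varphi$'' via Lemma~\ref{lem:Shilov} --- has no support: that lemma concerns domains $\{|P(T)|\le r\}$ in a relative affine line over an equidimensional base, not an arbitrary finite morphism, and no skeleton argument is available in dimension $d$.

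The paper closes this gap with the classical reduction of the strictly affino\"{\i}de $V$: since $V$ is a Weierstra\ss\ domain of $X$, $\As$ is dense in $\As_{V}$, so one may choose $f\in\As$ with spectral norm $\le 1$ whose reduction $\ti{f}$ vanishes on every irreducible component of $\ti{V}$ except the one containing $\ti{x}$; the Laurent domain $W=\{|f|=1\}$ then has reduction isomorphic to $D(\ti{f})$ (\cite{BGR}, 7.2.6/3), whose unique generic point is $\ti{x}$, so the Shilov boundary of $W$ is $\{x\}$ by the reduction characterization of Shilov boundaries. This simultaneously yields the separation, rules out new boundary points, keeps the domain rational in $X$ (because $f\in\As$), and keeps it strictly affino\"{\i}de (the radius is $1$). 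Your proposal would need to be completed along these lines to constitute a proof.
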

\begin{proof}
Le th\'eor\`eme de normalisation de Noether assure qu'il existe un morphisme fini $\varphi : X \to \D^d_{k}$ (o\`u nous avons not\'e~$\D^d_{k}$ le disque $\D^d_{k}(1,\dots,1)$). Nous avons $s(\varphi(x))=s(x)=d$. Par cons\'equent, il existe un domaine  de {Weierstra\ss} strictement affino\"{\i}de~$V_{0}$ de~$\D^d_{k}$ dont le bord de Shilov est r\'eduit au point~$\varphi(x)$. Son image r\'eciproque $V=\varphi^{-1}(V_{0})$ est un domaine de {Weierstra\ss} strictement affino\"{\i}de de~$X$.

Un argument de r\'eduction similaire \`a celui utilis\'e dans la preuve du lemme~\ref{lem:Shilov} montre que le bord de Shilov de~$V$ est \'egal \`a $\varphi^{-1}(\varphi(x))$. Notons~$\As$ l'alg\`ebre de~$X$ et~$\As_{V}$ celle de~$V$. Consid\'erons un \'el\'ement~$f$ de~$\As_{V}^\circ$ dont la r\'eduction~$\ti{f}$ s'annule sur toutes les composantes irr\'eductibles de~$\ti{V}$ sauf celle contenant~$\ti{x}$. Puisque~$V$ est un domaine de {Weierstra\ss} de~$X$, $\As$~est dense dans~$\As_{V}$ et nous pouvons supposer que~$f\in\As$. D'apr\`es~\cite{BGR}, proposition~7.2.6/3 (et~\cite{TemkinII}, proposition~3.1 dans le cas de valuation triviale), la r\'eduction du domaine affino\"{\i}de~$W$ de~$V$ d\'efini par $\{|f|=1\}$ est isomorphe \`a~$D(\ti{f}) \subset \ti{V}$. Son bord de Shilov est donc le singleton~$\{x\}$.
\end{proof}

\begin{cor}\label{cor:staffst}
Soit~$X$ un espace strictement $k$-affino\"{\i}de irr\'eductible de dimension~$d$. Soit~$x$ un point d'Abhyankar de~$X$. Alors il existe un domaine rationnel de~$X$ dont le bord de Shilov est r\'eduit au point~$x$.  
\end{cor}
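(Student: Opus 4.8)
La stratégie consiste à reprendre la preuve du corollaire~\ref{cor:staffs} en y remplaçant l'appel à la proposition~\ref{prop:s+tmax} sous l'hypothèse $s(x)=d$ par son appel en toute généralité ; les domaines affinoïdes que l'on obtient n'étant alors plus nécessairement strictement affinoïdes, il faut mener tout l'argument avec les réductions graduées de~\cite{TemkinII}. Quitte à remplacer l'algèbre de~$X$ par sa réduite, je supposerais~$X$ intègre de dimension~$d$. Le théorème de normalisation de Noether fournit un morphisme fini $\varphi : X \to \D^d_{k}$, où $\D^d_{k} = \D^d_{k}(1,\dots,1)$. Posons $y = \varphi(x)$. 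La première vérification serait que~$y$ est encore un point d'Abhyankar : le morphisme~$\varphi$ étant fini, l'extension $\Hs(y)\hookrightarrow\Hs(x)$ est finie, de sorte que les corps résiduels et les groupes des valeurs ne varient qu'à extension algébrique, resp. de torsion, près ; on a donc $s_{k}(y)=s_{k}(x)$ et $t_{k}(y)=t_{k}(x)$, et l'égalité $s_{k}(x)+t_{k}(x)=\dim_{k,x}(X)=d$ se transmet à~$y$ (l'espace~$\D^d_{k}$ étant partout de dimension~$d$).

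D'après la proposition~\ref{prop:s+tmax}, il existe un domaine de Weierstraß~$V_{0}$ de~$\D^d_{k}$ dont le bord de Shilov est le singleton~$\{y\}$. Je poserais $V = \varphi^{-1}(V_{0})$ ; c'est un domaine de Weierstraß de~$X$, qui contient~$x$, et le morphisme induit $\varphi : V \to V_{0}$ est fini et surjectif. Pour établir que $\Gamma_{V} = \varphi^{-1}(y)$, je transposerais aux réductions graduées l'argument de la preuve du lemme~\ref{lem:Shilov} : les espaces~$V$ et~$V_{0}$ étant équidimensionnels de dimension~$d$, l'un de leurs points appartient au bord de Shilov si, et seulement si, le degré de transcendance sur~$\ti{k}$ de la réduction graduée de son corps résiduel vaut~$d$ ; comme le morphisme induit $\ti{\varphi} : \ti{V} \to \ti{V_{0}}$ est fini, le corps résiduel gradué en un point de~$\ti{V}$ est une extension finie de celui de son image, d'où l'égalité des degrés de transcendance et finalement $\Gamma_{V} = \varphi^{-1}(\Gamma_{V_{0}}) = \varphi^{-1}(y)$.

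Il resterait à isoler le point~$x$ dans~$\Gamma_{V}$, exactement comme dans la preuve du corollaire~\ref{cor:staffs}. Le point~$x$ étant dans~$\Gamma_{V}$, sa réduction graduée~$\ti{x}$ est un point générique de l'espace équidimensionnel~$\ti{V}$ ; je choisirais un élément homogène~$\ti{f}$ de l'algèbre de~$\ti{V}$ qui s'annule sur toutes les composantes irréductibles de~$\ti{V}$ autres que celle de point générique~$\ti{x}$, mais pas en~$\ti{x}$, et je le relèverais en un élément~$f$ de l'algèbre de~$V$, que la densité de~$\As$ dans celle de~$V$ (licite puisque~$V$ est un domaine de Weierstraß de~$X$) permet de supposer dans~$\As$ ; posons $r=\rho(f)$. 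Le domaine affinoïde $W=\{z\in V\, |\, |f(z)|=r\}$ serait alors un domaine de Laurent, donc rationnel, de~$X$, et, d'après~\cite{BGR}, proposition~7.2.6/3 (et~\cite{TemkinII}, proposition~3.1 dans le cas de valuation triviale), sa réduction graduée serait isomorphe à~$D(\ti{f})$, qui ne possède que le point générique~$\ti{x}$. Le bord de Shilov de~$W$ serait donc le singleton~$\{x\}$, ce qui est l'assertion cherchée.

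Je m'attends à ce que l'unique point un peu délicat soit celui du deuxième paragraphe, à savoir disposer, dans le cadre gradué, des analogues des outils classiques employés dans la preuve du lemme~\ref{lem:Shilov} : la réduction graduée d'un morphisme fini d'espaces affinoïdes est finie, et le bord de Shilov d'un espace affinoïde équidimensionnel est formé des points dont la réduction graduée a un corps résiduel de degré de transcendance maximal. Ces faits, ainsi que la description de la réduction graduée d'un domaine du type $\{|f|\ge r\}$, figurent dans~\cite{TemkinII}, \S 3 ; il n'y a donc pas d'obstacle sérieux, mais seulement un travail de transposition.
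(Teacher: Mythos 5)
Votre construction est exactement celle du texte : normalisation de Noether $\varphi : X \to \D^d_{k}$, domaine de Weierstra{\ss}~$V_{0}$ fourni par la proposition~\ref{prop:s+tmax} et de bord de Shilov $\{\varphi(x)\}$ (votre vérification que $\varphi(x)$ est encore d'Abhyankar est correcte), $V=\varphi^{-1}(V_{0})$, puis isolement de~$x$ par un argument de réduction. La divergence porte sur l'étape centrale, l'égalité $\Gamma_{V} = \varphi^{-1}(\varphi(x))$ : vous proposez de transposer la preuve du lemme~\ref{lem:Shilov} au cadre des réductions graduées, alors que le texte contourne la difficulté par extension des scalaires. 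Il choisit un polyrayon $k$-libre~$\br$ tel que $V_{0}\ho_{k} k_{\br}$ (donc aussi $V\ho_{k} k_{\br}$) soit strictement affinoïde, vérifie à l'aide de la construction du lemme~\ref{lem:extkr} que les bords de Shilov de~$V$ et~$V_{0}$ correspondent exactement aux points de la forme~$\gamma_{\br}$ issus des bords de Shilov après extension, puis fait tourner l'argument strictement affinoïde (réductions classiques de~\cite{BGR}) sur~$k_{\br}$ ; la fin de la preuve est alors identique à celle du corollaire~\ref{cor:staffs}, et votre troisième paragraphe, justifié par les propositions~3.1 et~3.3 de~\cite{TemkinII}, en est une version graduée tout à fait acceptable.

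Le point faible est la justification de votre deuxième paragraphe. Les deux énoncés gradués que vous invoquez --- la réduction graduée d'un morphisme fini d'espaces affinoïdes est finie (ou au moins entière), et le bord de Shilov d'un affinoïde équidimensionnel de dimension~$d$ est formé des points dont le corps résiduel gradué est de degré de transcendance~$d$ sur~$\ti{k}$ --- ne se trouvent pas, sous la forme dont vous avez besoin, dans~\cite{TemkinII}, \S~3, qui ne fournit que la réduction des domaines du type $\{|f|=\rho(f)\}$ et la correspondance entre bord de Shilov et points génériques de la réduction graduée. L'entièreté de la réduction graduée d'un morphisme fini est la partie la plus abordable (équations entières à coefficients contrôlés par la seminorme spectrale) ; en revanche, le second énoncé exige un analogue gradué du lemme~\ref{lem:equidim} (équidimensionnalité de la réduction graduée, de même dimension), donc un minimum de théorie de la dimension graduée qui n'est développée ni dans~\cite{TemkinII} ni dans la section~\ref{section:alggrad}, et dont la preuve naturelle passerait par une normalisation de Noether graduée ou\dots\ par l'extension des scalaires à~$k_{\br}$, précisément le détour que choisit le texte pour éviter ce travail. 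Votre stratégie n'est donc pas fausse, mais, en l'état, elle repose sur des références qui ne couvrent pas ce que vous leur faites dire : il faudrait soit établir ces énoncés gradués, soit revenir au changement de base du texte.
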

\begin{proof}
Construisons un morphisme $\varphi : X \to \D^d_{k}$ et des domaines affino\"{\i}des~$V$ et~$V_{0}$ comme pr\'ec\'edemment. Soit~$\br$ un polyrayon $k$-libre tel que $V_{0}\ho_{k} k_{\br}$ soit strictement $k_{\br}$-affino\"{\i}de. \`A tout point~$y$ de~$V$ ou~$V_{0}$ on peut associer un point~$y_{\br}$ de~$V\ho_{k} k_{\br}$ ou~$V_{0}\ho_{k} k_{\br}$, par la construction d\'ecrite au lemme~\ref{lem:extkr}. On v\'erifie que les points du bord de Shilov de $V\ho_{k} k_{\br}$ (resp. $V_{0}\ho_{k} k_{\br}$) sont exactement ceux de la forme~$\gamma_{r}$, o\`u~$\gamma$ est un point du bord de Shilov de~$V$ (resp.~$V_{0}$). Nous pouvons alors reprendre la preuve pr\'ec\'edente pour montrer que le bord de Shilov de~$V$ est \'egal \`a $\varphi^{-1}(\varphi(x))$. La fin de la preuve est identique au cas strictement affino\"{\i}de.
\end{proof}

\begin{rem}
Dans les corollaires~\ref{cor:staffs} et~\ref{cor:staffst}, le domaine rationnel peut \^etre d\'efini par~$d+1$ \'el\'ements.
\end{rem}

\begin{cor}
Soit~$X$ un espace $k$-affino\"{\i}de. Soit~$x$ un point d'Abhyankar de~$X$ appartenant \`a une seule composante irr\'eductible~$C$ de~$X$ et supposons que cette composante soit strictement $k$-affino\"{\i}de. Alors il existe un domaine rationnel de~$X$ dont le bord de Shilov est r\'eduit au point~$x$. 

En outre, si~$s(x)=\dim_{k,x}(X)$, le domaine rationnel peut \^etre choisi strictement affino\"{\i}de.
\end{cor}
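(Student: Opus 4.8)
The plan is to isolate the distinguished component of~$X$ by an affinoid subdomain and then invoke Corollaries~\ref{cor:staffs} and~\ref{cor:staffst}. First I would replace~$\As$ by~$\As_{\mathrm{red}}$, which alters neither the Shilov boundary of an affinoid subdomain, nor the invariants $s_{k}(x)$ and $t_{k}(x)$, nor the local dimension, nor the property of a subdomain being rational or strictly $k$-affinoid (and the points correspond bijectively). Let~$\p$ be the minimal prime of~$\As$ attached to~$C$, so that $C=\Mc(\As/\p)$ with~$\As/\p$ strictly $k$-affinoid, and set $d=\dim(C)$. Since~$x$ lies on no component other than~$C$, one has $\dim_{k,x}(X)=d$; combined with the hypothesis $s_{k}(x)+t_{k}(x)=\dim_{k,x}(X)$, this says exactly that~$x$ is an Abhyankar point of~$C$.

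Next I would cut~$X$ down to~$C$. Let~$C_{1},\dots,C_{m}$ be the irreducible components of~$X$ distinct from~$C$, with minimal primes $\p_{1},\dots,\p_{m}$, and put $Z=C_{1}\cup\dots\cup C_{m}$. Since the~$\p_{i}$ differ from the minimal prime~$\p$, the ideal $\p_{1}\cap\dots\cap\p_{m}$ is not contained in~$\p$, so one may pick $g\in\p_{1}\cap\dots\cap\p_{m}$ with $g\notin\p$: then~$g$ vanishes on~$Z$ but not identically on~$C$, and since the Abhyankar point~$x$ of the irreducible space~$C$ lies on no proper Zariski-closed subset of~$C$ (Remark~\ref{rem:p=0}), we have $g(x)\neq 0$. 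As $\{y\in X : g(y)\neq 0\}$ is an open neighbourhood of~$x$ and the rational domains of~$X$ form a basis of neighbourhoods of~$x$, there is a rational domain~$U$ of~$X$ with $x\in U$ on which~$g$ is invertible; as~$g$ vanishes on~$Z$, this~$U$ is disjoint from~$Z$, hence set-theoretically contained in~$C$. Every element of~$\p$ then vanishes on $C\supseteq U$, so has vanishing spectral seminorm on~$U$, and therefore vanishes in~$\As_{U}$, which is reduced being a rational localization of the reduced algebra~$\As$. Thus $\As\to\As_{U}$ factors through~$\As/\p$ and identifies~$\As_{U}$ with a rational localization of~$\As/\p$; in other words~$U$ is a rational domain of~$C$. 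In particular~$U$ is strictly $k$-affinoid, and, being an affinoid subdomain of the irreducible $d$-dimensional space~$C$, it is equidimensional of dimension~$d$ (local dimension is invariant under affinoid localization and equal to~$d$ at every point of~$C$). Finally $\dim_{k,x}(U)=d$ and $s_{k}(x)+t_{k}(x)=d$, so~$x$ is an Abhyankar point of~$U$.

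It remains to apply Corollary~\ref{cor:staffst} to~$(U,x)$. The proofs of Corollaries~\ref{cor:staffs} and~\ref{cor:staffst} use irreducibility of the ambient space only through Noether normalization and through the value of the local dimension at~$x$; they therefore carry over verbatim to an equidimensional strictly $k$-affinoid space, and~$U$ is such a space. One thus obtains a rational domain~$V$ of~$U$ with $\Gamma_{V}=\{x\}$, and since a rational domain of a rational domain of~$X$ is again a rational domain of~$X$, this~$V$ proves the first assertion. If moreover $s_{k}(x)=\dim_{k,x}(X)=d=\dim(U)$, one applies instead the equidimensional analogue of Corollary~\ref{cor:staffs}, which produces~$V$ strictly $k$-affinoid; as~$U$ is strictly $k$-affinoid and~$V$ is a rational domain of it, $V$ is a strictly $k$-affinoid rational domain of~$X$.

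The main obstacle is the construction of~$U$: one must produce, around~$x$, a rational domain of the merely $k$-affinoid space~$X$ that is in fact a (strictly $k$-affinoid) rational domain of the component~$C$. The two delicate points are that~$U$ should meet no other component --- which is where the Abhyankar hypothesis enters, via Remark~\ref{rem:p=0}, to guarantee $g(x)\neq 0$ --- and that rational localization preserves reducedness, so that the functions cutting out the other components genuinely die in~$\As_{U}$. The subsidiary point that Corollaries~\ref{cor:staffs} and~\ref{cor:staffst} remain valid with \emph{equidimensional} in place of \emph{irreducible} is needed because~$U$ is in general reducible.
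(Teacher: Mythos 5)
Your construction breaks down at the assertion that $U$ is strictly $k$-affino\"{\i}de. Being a rational domain of the strictly $k$-affino\"{\i}d component~$C$ does not imply strictness: a rational (or Weierstra{\ss}) domain of a strictly $k$-affino\"{\i}d space is strictly $k$-affino\"{\i}d only when its defining radii can be taken in~$\sqrt{|k^*|}$; for instance $\{|T|\le r\}$ with $r\notin\sqrt{|k^*|}$ is a Weierstra{\ss} domain of~$\Mc(k\{T\})$ whose algebra $k\{r^{-1}T\}$ is not strictly $k$-affino\"{\i}de. Your~$U$ comes from an abstract neighbourhood-basis argument with no control on the radii, so the claim is unjustified, and it is not a harmless detail: strictness of~$U$ is exactly what allows Corollary~\ref{cor:staffst} (or the equidimensional variant you invoke) to be applied, since its proof rests on Noether normalization onto the unit polydisc, which requires a strictly $k$-affino\"{\i}d source. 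When the valuation of~$k$ is nontrivial you could repair this by taking for~$U$ the Laurent domain $\{|g|\ge\varepsilon\}$ with $\varepsilon\in\sqrt{|k^*|}$ and $0<\varepsilon<|g(x)|$, so that $\As_{U}\simeq\As_{C}\{\varepsilon\bar g^{-1}\}$ is strictly affino\"{\i}de; but the paper allows trivially valued~$k$, where such an~$\varepsilon$ need not exist, and you would in any case still be relying on an equidimensional generalization of Corollaries~\ref{cor:staffs} and~\ref{cor:staffst} that you assert rather than prove.

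The paper's own proof avoids the detour entirely, and you could too: apply Corollary~\ref{cor:staffst} (resp.~\ref{cor:staffs}) directly to the irreducible strictly $k$-affino\"{\i}d space~$C$, obtaining finitely many $f_{i}\in\As_{C}$ and radii $s_{i},t_{i}$ such that~$x$ is the unique Shilov point of $V=\bigcap_{i}\{y\in C\, :\, s_{i}\le|f_{i}(y)|\le t_{i}\}$; lift the~$f_{i}$ to elements~$g_{i}$ of~$\As$ (the map $\As\to\As_{C}$ is surjective), choose, exactly as you did, an $f\in\As$ vanishing on the union of the other components with $f(x)\ne 0$, and set $W=\bigcap_{i}\{y\in X\, :\, s_{i}\le|g_{i}(y)|\le t_{i}\}\cap\{y\in X\, :\, |f(y)|=|f(x)|\}$. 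Since~$f$ kills the other components, $W=V\cap\{|f|=\|f\|_{V}\}$ (note $\|f\|_{V}=|f(x)|$ because~$x$ is the only Shilov point of~$V$), and Temkin's proposition~3.1 then shows that the Shilov boundary of~$W$ is~$\{x\}$; $W$ is a rational domain of~$X$, and strictly affino\"{\i}d in the case $s(x)=\dim_{k,x}(X)$. So the only missing idea is to lift the finitely many defining functions and add the single condition $\{|f|=|f(x)|\}$, rather than shrinking~$X$ first; this uses only the corollaries already proved and raises no strictness issue.
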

\begin{proof}
D'apr\`es le corollaire~\ref{cor:staffst}, il existe un entier~$p$, des \'el\'ements~$f_{1},\dots,f_{p}$ de l'alg\`ebre~$\As_{C}$ de~$C$ et des nombres r\'eels $s_{1},\dots,s_{p},t_{1},\ldots,t_{p}$ tels que le point~$x$ soit l'unique point du bord de Shilov du domaine affino\"{\i}de~$V$ de~$C$ d\'efini par $V = \bigcap_{1\le i\le p} \{y\in C\, |\, s_{i} \le |f_{i}(y)|\le t_{i}\}$. Relevons les~$f_{i}$ en des \'el\'ements~$g_{i}$ de l'alg\`ebre~$\As$ de~$X$.

Notons~$D$ la r\'eunion des composantes irr\'eductibles de~$X$ ne contenant pas~$x$. Soit~$f$ un \'el\'ement de~$\As$ qui est nul sur~$D$ mais ne s'annule pas en~$x$. Consid\'erons alors
\[\renewcommand{\arraystretch}{1.5}{\begin{array}{rcl}
W &=&\disp \Big(\bigcap_{1\le i\le p} \{y\in X\, |\, s_{i} \le |g_{i}(y)|\le t_{i}\}\Big) \cap \{y\in X\, |\, |f(y)|=|f(x)|\}\\
&=& V \cap \{y\in X\, |\, |f(y)|=\|f\|_{V}\}.
\end{array}}\]
C'est un domaine rationnel de~$X$ dont le bord de Shilov est r\'eduit au point~$x$ d'apr\`es~\cite{TemkinII}, proposition~3.1. 

On d\'emontre de m\^eme la seconde partie du r\'esultat.
\end{proof}

\begin{rem}
Dans ce corollaire, le domaine rationnel peut \^etre d\'efini par~$\dim_{x}(X)+2$ \'el\'ements.
\end{rem}

\begin{rem}
Soit~$X$ un espace affino\"{\i}de. Consid\'erons un domaine rationnel $V = \bigcap_{1\le i\le p} \{y\in X\, |\, s_{i} \le |f_{i}(y)|\le t_{i}\}$ de~$X$ dont le bord de Shilov est un singleton~$\{x\}$. D'apr\`es~\cite{TemkinII}, proposition~3.1, le bord de Shilov du domaine rationnel $\bigcap_{1\le i\le p} \{y\in X\, |\,  |f_{i}(y)| = |f_{i}(x)|=\|f\|_{V}\}$ est encore le singleton~$\{x\}$.

Jointe au corollaire qui pr\'ec\`ede, cette remarque permet de retrouver le r\'esultat d'un th\'eor\`eme de T.~de Fernex, L.~Ein et~S.~Ishii qui assure que toute valuation divisorielle sur une vari\'et\'e complexe affine peut \^etre d\'etermin\'ee par sa valeur sur un nombre fini de fonctions (\emph{cf.}~\cite{valuationsarcs}, theorem~0.2 pour un \'enonc\'e pr\'ecis).
\end{rem}

\medskip

Revenons maintenant aux points d'Abhyankar des disques.

\begin{cor}\label{cor:exttfs+tmax}
Soient~$p\in\N$ et~$\br \in (\R_{+}^*)^p$. Soit~$x$ un point d'Abhyankar de~$\D^p_{k}(\br)$. Il existe un sous-corps~$\ell$ de~$k$ de type d\'enombrable sur~$k_{p}$ v\'erifiant la propri\'et\'e suivante : pour tout corps~$\ell'$ tel que $\ell \subset \ell' \subset k$, si $\pi : \D^p_{k}(\br) \to \D^p_{\ell'}(\br)$ d\'esigne le morphisme de changement de base, alors~$x$ est l'unique point du bord de Shilov de la fibre~$\pi^{-1}(\pi(x))$. 
\end{cor}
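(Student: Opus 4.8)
Le plan consiste \`a descendre le domaine de Weierstra\ss\ construit dans la proposition~\ref{prop:s+tmax} en ma\^{\i}trisant son corps de d\'efinition. D'apr\`es cette proposition et la remarque qui la suit, il existe un domaine de Weierstra\ss~$V$ de~$\D^p_{k}(\br)$, dont le bord de Shilov est r\'eduit au point~$x$, d\'efini par~$p$ polyn\^omes $Q_{1},\dots,Q_{p}$ \`a coefficients dans un sous-corps~$\ell_{0}$ de~$k$ de type fini sur~$k_{p}$. Quitte \`a remplacer~$V$ par le domaine affino\"{\i}de $\bigcap_{i=1}^p\{|Q_{i}|\le|Q_{i}(x)|\}$ qu'il contient --- dont le bord de Shilov reste le singleton~$\{x\}$, d'apr\`es~\cite{TemkinII}, proposition~3.1, comme dans les arguments de la section pr\'ec\'edente ---, nous pouvons supposer que
\[V=\bigcap_{i=1}^p\{y\in\D^p_{k}(\br)\, |\, |Q_{i}(y)|\le|Q_{i}(x)|\}.\]
Comme un corps de type fini sur~$k_{p}$ en est un~$k_{p}$-espace vectoriel de dimension d\'enombrable, son adh\'erence~$\ell$ dans~$k$ est de type d\'enombrable sur~$k_{p}$ ; c'est ce corps que nous proposons.

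Soit~$\ell'$ un corps tel que $\ell\subset\ell'\subset k$ ; quitte \`a le remplacer par son adh\'erence dans~$k$, nous le supposons complet. Notons $\pi:\D^p_{k}(\br)\to\D^p_{\ell'}(\br)$ le morphisme de changement de base et introduisons le domaine de Weierstra\ss\ $V_{\ell'}=\bigcap_{i=1}^p\{y\in\D^p_{\ell'}(\br)\, |\, |Q_{i}(y)|\le|Q_{i}(x)|\}$, les~$Q_{i}$ \'etant \`a coefficients dans~$\ell'$. La valeur absolue en un point~$z$ de~$\D^p_{k}(\br)$ prolonge celle en~$\pi(z)$, de sorte que $|Q_{i}(z)|=|Q_{i}(\pi(z))|$ pour tout~$i$ ; il en r\'esulte que $V=\pi^{-1}(V_{\ell'})$ et que $\pi(x)\in V_{\ell'}$. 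En particulier, la fibre $F=\pi^{-1}(\pi(x))$ est contenue dans~$V$ et contient le point~$x$.

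Le c{\oe}ur de l'argument est alors le transfert du bord de Shilov de~$V$ \`a la fibre~$F$. Pour tout \'el\'ement~$f$ de l'alg\`ebre $k\{\br^{-1}\bT\}$ de~$\D^p_{k}(\br)$, les inclusions $\{x\}\subset F\subset V$ jointes \`a l'hypoth\`ese sur~$V$ donnent
\[|f(x)|\le\sup_{y\in F}|f(y)|\le\sup_{y\in V}|f(y)|=|f(x)|,\]
donc~$x$ r\'ealise le maximum de~$|f|$ sur~$F$. Or la fibre~$F$ n'est autre que le spectre de l'alg\`ebre affino\"{\i}de $\Hs(\pi(x))\ho_{\ell'}k$, et l'application canonique $\ell'\{\br^{-1}\bT\}\to\Hs(\pi(x))$ \'etant d'image dense, il en va de m\^eme de l'application $k\{\br^{-1}\bT\}=\ell'\{\br^{-1}\bT\}\ho_{\ell'}k\to\Hs(\pi(x))\ho_{\ell'}k$. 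Par continuit\'e, $x$ r\'ealise le maximum de~$|g|$ sur~$F$ pour tout \'el\'ement~$g$ de l'alg\`ebre de~$F$ : le singleton~$\{x\}$ est une partie ferm\'ee de~$F$ sur laquelle tout \'el\'ement atteint son maximum, donc il contient le bord de Shilov de~$F$ ; ce dernier n'\'etant pas vide, il est r\'eduit \`a~$\{x\}$.

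La difficult\'e principale nous para\^{\i}t surtout d'ordre technique : s'assurer que le domaine~$V$ fourni par la proposition~\ref{prop:s+tmax} peut \^etre mis, uniform\'ement en~$\ell'$, sous la forme $\bigcap_{i}\{|Q_{i}|\le|Q_{i}(x)|\}$ avec des~$Q_{i}$ \`a coefficients dans un m\^eme corps de type fini sur~$k_{p}$, ce qui garantit \`a la fois l'\'egalit\'e $V=\pi^{-1}(V_{\ell'})$ et l'inclusion $F\subset V$. Une fois ce point acquis, la caract\'erisation du bord de Shilov comme plus petite partie ferm\'ee sur laquelle toute fonction atteint son maximum donne la conclusion sans calcul suppl\'ementaire.
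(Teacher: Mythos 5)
Your overall route is the same as the paper's: take the Weierstra{\ss} domain~$V$ of Proposition~\ref{prop:s+tmax}, defined by finitely many polynomials with coefficients in a small subfield, observe that $V=\pi^{-1}(V_{\ell'})$, so that the fibre $F=\pi^{-1}(\pi(x))$ is contained in~$V$, and use that~$x$ is the unique Shilov point of~$V$ to bound functions on~$F$ by their value at~$x$. (Your preliminary replacement of~$V$ by $\bigcap_{i}\{|Q_{i}|\le|Q_{i}(x)|\}$ is harmless; it is in fact an equality, since $\|Q_{i}\|_{V}=|Q_{i}(x)|$.) The genuine problem is the density claim you invoke to pass from elements of $k\{\br^{-1}\bT\}$ to all elements of the affinoid algebra of~$F$: the canonical map $\ell'\{\br^{-1}\bT\}\to\Hs(\pi(x))$ does \emph{not} have dense image in general. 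Its image is the quotient $\ell'\{\br^{-1}\bT\}/\p_{\pi(x)}$, whereas $\Hs(\pi(x))$ is the completion of the \emph{fraction field} $k(\p_{\pi(x)})$; when $\pi(x)$ is, say, the Gau{\ss} point (a typical situation here, $x$ being an Abhyankar point), the element $T_{1}(\pi(x))^{-1}$ lies at positive distance from that image, and, using Lemma~\ref{lem:isometrie2}, one checks that the image of $k\{\br^{-1}\bT\}$ in $\Hs(\pi(x))\ho_{\ell'}k$ is likewise not dense. So the ``par continuit\'e'' step fails as written. The gap is repairable: the subalgebra $k(\p_{\pi(x)})\otimes_{\ell'}k$ \emph{is} dense in $\Hs(\pi(x))\ho_{\ell'}k$, and each of its elements can be written $h(\pi(x))^{-1}g$ with $h\in\ell'\{\br^{-1}\bT\}$, $h(\pi(x))\neq 0$, and $g$ in the image of $k\{\br^{-1}\bT\}$; since $|h|$ is constant on~$F$ (being defined over~$\ell'$), the inequality $|g(y)|\le\|g\|_{V}=|g(x)|$ for $y\in F\subset V$ gives the maximum at~$x$ for all such elements, and density plus continuity then conclude. (This supplementary argument is also what is implicitly needed behind the paper's own one-line deduction from the inequality on polynomials.)

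A second, smaller, point: your justification that~$\ell$ is of countable type over~$k_{p}$ is incorrect as stated. A field of finite type over~$k_{p}$ need not have countable $k_{p}$-dimension: for $k_{p}=\Q_{p}$, the elements $1/(T-a)$, $a\in\Q_{p}$, are linearly independent in $\Q_{p}(T)$. The conclusion nevertheless holds: the subfield of~$\ell_{0}$ generated over the prime field of~$k$ by the finitely many coefficients involved is countable and dense in~$\ell_{0}$ (approximate the coefficients of numerators and denominators), so the closure~$\ell$ of~$\ell_{0}$ in~$k$ does contain a dense $k_{p}$-subspace of countable dimension. This is the same passage from ``type fini'' to ``type d\'enombrable'' that the paper leaves implicit, so it is worth stating correctly.
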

\begin{proof}
Consid\'erons le domaine affino\"{\i}de~$V$ dont il est question dans la proposition~\ref{prop:s+tmax} et~$\ell$ un sous-corps de~$k$ de type d\'enombrable sur~$k_{p}$ sur lequel il est d\'efini. Il suffit de d\'emontrer la propri\'et\'e pour le corps~$\ell$. Il existe un domaine affino\"{\i}de~$W$ de~$\D^p_{\ell}(\br)$ tel que $\pi^{-1}(W) = V$. Le point~$\pi(x)$ appartient \`a~$W$ et tous les points de~$\pi^{-1}(\pi(x))$ appartiennent donc \`a~$V$. Par cons\'equent, pour tout $y\in\pi^{-1}(\pi(x))$, nous avons
\[\forall P \in k[T_{1},\ldots,T_{p}], |P(y)| \le \|P\|_{V} = |P(x)|.\]
Le point~$x$ est donc l'unique point du bord de Shilov de cette fibre. 
\end{proof}

Nous allons maintenant d\'emontrer un r\'esultat analogue valant pour tous les points des disques, par une sorte de passage \`a la limite.

\begin{thm}\label{thm:exttf}
Soient~$p\in\N$ et~$\br \in (\R_{+}^*)^p$. Pour tout point~$x$ de~$\D^p_{k}(\br)$, il existe un sous-corps~$\ell$ de~$k$ de type d\'enombrable sur~$k_{p}$ v\'erifiant la propri\'et\'e suivante : pour tout corps~$\ell'$ tel que $\ell \subset \ell' \subset k$, si $\pi : \D^p_{k}(\br) \to \D^p_{\ell'}(\br)$ d\'esigne le morphisme de changement de base, alors~$x$ est l'unique point du bord de Shilov de la fibre~$\pi^{-1}(\pi(x))$. 
\end{thm}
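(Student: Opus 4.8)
Le point~$x$ de~$\D^p_{k}(\br)$ n'est en général plus un point d'Abhyankar, donc le corollaire~\ref{cor:exttfs+tmax} ne s'applique pas directement ; l'idée est de réaliser~$x$ comme limite d'une suite généralisée de points d'Abhyankar et de recoller les sous-corps de définition obtenus à chaque étape. Précisément, d'après le corollaire~\ref{cor:Abhyankardense}, l'ensemble des points d'Abhyankar de~$\D^p_{k}(\br)$ est dense, donc~$x$ est adhérent à cet ensemble. Je choisirais un système fondamental de voisinages affinoïdes~$(U_{i})_{i\in I}$ de~$x$ dans~$\D^p_{k}(\br)$ (ordonné par inclusion décroissante), et pour chaque~$i$ un point d'Abhyankar~$x_{i}\in U_{i}$ ; la suite généralisée~$(x_{i})_{i\in I}$ converge alors vers~$x$. À chaque~$x_{i}$, le corollaire~\ref{cor:exttfs+tmax} associe un sous-corps~$\ell_{i}$ de~$k$ de type dénombrable sur~$k_{p}$ tel que~$x_{i}$ soit l'unique point du bord de Shilov de la fibre au-dessus de~$\pi_{i}(x_{i})$ relativement au changement de base~$\D^p_{k}(\br)\to\D^p_{\ell_{i}}(\br)$.

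**Construction du corps~$\ell$.** Le corps~$\ell$ recherché ne peut pas être simplement la réunion des~$\ell_{i}$ (qui pourrait ne pas être de type dénombrable, l'ensemble~$I$ étant non dénombrable). Il faut extraire une sous-suite dénombrable. L'observation clé est que la fonction~$y\mapsto |P(y)|$ est continue sur~$\D^p_{k}(\br)$ pour tout~$P$, et que l'algèbre du disque~$\ell'\{\br^{-1}\bT\}$ pour~$\ell'$ de type dénombrable admet un sous-ensemble dénombrable dense (les polynômes à coefficients dans un sous-corps dénombrable dense de~$\ell'$). Je procéderais ainsi : comme~$x$ est adhérent aux~$x_{i}$, pour chaque polynôme~$P\in k[\bT]$ on a $|P(x)|=\lim_{i}|P(x_{i})|$. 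En énumérant un ensemble dénombrable~$D$ de polynômes à coefficients dans un sous-corps dénombrable de~$k$ qui soit dense dans~$\bigcup_{i}\ell_{i}\{\br^{-1}\bT\}$ (ou plutôt, en construisant par récurrence une suite~$(i_{n})_{n\in\N}$ d'indices telle que les coefficients intervenant restent dans un corps de type dénombrable et telle que la suite~$(x_{i_{n}})$ approxime~$x$ sur tous les éléments d'une base dénombrable dense), on pose~$\ell$ le sous-corps de~$k$ engendré sur~$k_{p}$ par les coefficients de tous les polynômes construits et par les~$\ell_{i_{n}}$ ; c'est bien un sous-corps de type dénombrable sur~$k_{p}$.

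**Vérification de la propriété.** Soit~$\ell'$ avec~$\ell\subset\ell'\subset k$ et~$\pi:\D^p_{k}(\br)\to\D^p_{\ell'}(\br)$ le changement de base. Il s'agit de montrer que~$x$ est l'unique point du bord de Shilov de~$F:=\pi^{-1}(\pi(x))$, c'est-à-dire que pour tout~$y\in F$ et tout~$f$ dans l'algèbre de~$F$ (qui est~$\Hs(\pi(x))\ho_{\ell'}\text{(algèbre du disque)}$, ou plus simplement pour tout~$f$ polynôme à coefficients dans~$\ell'$) on a~$|f(y)|\le|f(x)|$. Par densité, il suffit de traiter~$f=P\in\ell'[\bT]$, voire~$P$ dans l'ensemble dénombrable dense fixé. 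Pour un tel~$P$, puisque ses coefficients sont dans~$\ell'$ donc dans un corps contrôlé, et puisque~$x_{i_{n}}\to x$, on a~$|P(x)|=\lim_{n}|P(x_{i_{n}})|$ ; comme~$P$ est à coefficients dans (le complété de)~$\ell_{i_{n}}$ pour~$n$ assez grand —~c'est ici qu'intervient le choix de~$\ell$ —~le corollaire~\ref{cor:exttfs+tmax} donne~$|P(y)|\le|P(x_{i_{n}})|$ pour tout~$y$ dans la fibre relative à~$\ell_{i_{n}}$, donc en particulier pour~$y\in F$ (la fibre au-dessus de~$\ell'\supset\ell_{i_{n}}$ est contenue dans celle au-dessus de~$\ell_{i_{n}}$). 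En passant à la limite sur~$n$, on obtient~$|P(y)|\le|P(x)|$, d'où~$|P(y)|\le|P(x)|$ pour tout polynôme, puis pour tout élément de l'algèbre de~$F$ par densité et continuité.

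**Principal obstacle.** Le point délicat est le contrôle de dénombrabilité : il faut organiser la récurrence de sorte que les polynômes nécessaires pour séparer~$x$ de chaque~$y\in F$ (qui dépendent {\it a priori} de~$\ell'$, lequel n'est pas fixé) restent dans un unique corps de type dénombrable. La résolution passe par le fait que l'algèbre de la fibre~$F$, pour~$\ell'$ quelconque entre~$\ell$ et~$k$, est une limite d'algèbres de polynômes, et que la condition «~$x$ est l'unique point du bord de Shilov~» se teste sur un ensemble {\it dénombrable} de fonctions (une partie dense de l'algèbre du disque de base, qui est elle-même de type dénombrable car~$\ell$ l'est) ; c'est la combinaison de la densité des polynômes, de la continuité des fonctions~$|P(\cdot)|$, et du caractère métrisable (donc à bases dénombrables de voisinages) de~$\D^p_{\ell}(\br)$ qui permet de se ramener à une sous-suite dénombrable. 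Une fois ce point réglé, le reste n'est que passage à la limite dans des inégalités.
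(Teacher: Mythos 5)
Il y a deux lacunes sérieuses dans votre étape de vérification. D'abord, la réduction du test aux polynômes à coefficients dans~$\ell'$ (a fortiori dans une partie dénombrable dense) ne peut pas fonctionner : si $P\in\ell'[T_{1},\ldots,T_{p}]$ et si~$y$ appartient à la fibre $F=\pi^{-1}(\pi(x))$, alors la valeur de~$P$ en~$y$ est l'image de $P(\pi(x))\in\Hs(\pi(x))$ par le plongement isométrique $\Hs(\pi(x))\hookrightarrow\Hs(y)$, donc $|P(y)|=|P(x)|$ pour \emph{tout} point de~$F$ ; ces fonctions sont de valeur absolue constante sur la fibre et ne détectent rien de son bord de Shilov. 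Le critère pertinent (celui du corollaire~\ref{cor:exttfs+tmax}) porte sur les polynômes à coefficients dans~$k$ tout entier, et comme~$k$ n'est pas de type dénombrable, votre extraction d'une famille dénombrable de fonctions test n'est pas disponible. Ensuite, votre application du corollaire~\ref{cor:exttfs+tmax} confond deux fibres : ce corollaire contrôle la fibre au-dessus de~$\pi_{\ell_{i_{n}}}(x_{i_{n}})$, alors que~$y\in F$ appartient à la fibre au-dessus de~$\pi_{\ell_{i_{n}}}(x)$ ; l'inégalité $|P(y)|\le|P(x_{i_{n}})|$ n'est donc pas justifiée. Plus fondamentalement, des points d'Abhyankar simplement proches de~$x$ ne \emph{dominent} pas~$x$ : rien ne majore le sup de~$|P|$ sur~$F$ par~$|P(x_{i_{n}})|$, et le passage à la limite ne peut pas démarrer.

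La preuve du texte contourne ces deux obstacles par une récurrence sur la quantité $p-s(x)-t(x)$ : on projette sur~$\D^{p-1}_{k}(\br')$, on applique l'hypothèse de récurrence au point image~$y$, puis on travaille dans la fibre de dimension~$1$, où~$x$ est limite des points de Shilov~$\gamma_{n}$ d'une suite décroissante de domaines affinoïdes $V_{n}=\{|P_{n}|\le r_{n}\}$ \emph{contenant}~$x$ — c'est exactement la propriété de domination qui manque à vos~$x_{i}$ : pour tout $P\in k[T_{1},\ldots,T_{p}]$ simultanément, $\|P\|_{V_{n}\cap\varphi^{-1}(y)}=|P(\gamma_{n})|\ge|P(x)|$ et $\inf_{n}|P(\gamma_{n})|=|P(x)|$, parce qu'il s'agit d'une appartenance ($z\in V_{n}$ pour tout~$n$) et non d'une liste dénombrable de conditions numériques. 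L'autre ingrédient essentiel, absent de votre proposition, est le corollaire~\ref{cor:cime} : en prenant~$\ell$ algébriquement clos, le point~$\pi(x)$ est universel, donc la fibre~$F$ possède un unique point de Shilov~$z$, et il ne reste qu'à montrer $z=x$ par les inégalités ci-dessus. Enfin, la dénombrabilité est réglée sans extraction : seuls entrent dans~$\ell$ les coefficients (en nombre dénombrable) des~$P_{n}$, approchés par des polynômes sur~$k$, et le corps fourni par l'hypothèse de récurrence pour~$y$.
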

\begin{proof}
Soit~$x$ un point de~$\D^p_{k}(\br)$. Nous allons d\'emontrer que~$x$ v\'erifie la seconde propri\'et\'e par r\'ecurrence sur la quantit\'e~$p-s(x)-t(x)$. Si elle est nulle, le point~$x$ est un point d'Abhyankar et le corollaire~\ref{cor:exttfs+tmax} permet de conclure.

Soit~$m\in\N$ et supposons avoir d\'emontr\'e le r\'esultat lorsque~$p-s(x)-t(x)=m$. Soient~$p\in\N$, $\br = (r_{1},\ldots,r_{p}) \in (\R_{+}^*)^p$ et~$x$ un point de~$\D^p_{k}(\br)$ tel que $p-s(x)-t(x)=m+1$. Posons~$\br'= (r_{1},\ldots,r_{p-1})$ et notons $\varphi : \D^p_{k}(\br) \to \D^{p-1}_{k}(\br')$ le morphisme de projection sur les~$p-1$ premi\`eres coordonn\'ees. Posons $y=\varphi(x)$. Puisque $s(x)+t(x) < p$, quitte \`a r\'eordonner les variables, nous pouvons donc supposer que $s(y)+t(y) = s(x)+t(x)$.

Pla\c{c}ons-nous un moment dans la fibre $\varphi^{-1}(y) \simeq \D^1_{\Hs(y)}(r_{p})$. Il existe une suite de polyn\^omes~$(P_{n}(T_{p}))_{n\ge 0}$ \`a coefficients dans~$\Hs(y)$ et une suite de nombres r\'eels strictement positifs~$(r_{n})_{n\ge 0}$ v\'erifiant les propri\'et\'es suivantes :
\begin{enumerate}
\item pour tout~$n$, le domaine affino\"{\i}de~$V_{n}$ de~$\D^1_{\Hs(y)}(r_{p})$ d\'efini par~$\{|P_{n}|\le r_{n}\}$ contient~$x$\ ;
\item pour tout~$n$, le bord de Shilov de~$V_{n}$ est r\'eduit \`a un point, que nous noterons~$\gamma_{n}$\ ;
\item la suite d'affino\"{\i}des~$(V_{n})_{n\ge 0}$ est d\'ecroissante\ ;
\item la suite~$(\gamma_{n})_{n\ge 0}$ tend vers~$x$.
\end{enumerate}

Les \'el\'ements de~$\Hs(y)$ \'etant limites de quotients d'\'el\'ements de $k[T_{1},\ldots,T_{p-1}]$, en proc\'edant comme dans la preuve de la proposition~\ref{prop:s+tmax}, nous pouvons supposer que les coefficients des polyn\^omes~$P_{n}(T_{p})$ appartiennent \`a $k[T_{1},\ldots,T_{p-1}]$. Il existe donc un sous-corps~$\ell_{0}$ de~$k$ de type d\'enombrable sur~$k_{p}$ tel que tous les polyn\^omes~$P_{n}$ appartiennent \`a $\ell_{0}[T_{1},\ldots,T_{p}]$. En outre, nous avons $(p-1)-s(y)-t(y)=m$. Par hypoth\`ese de r\'ecurrence, il existe un sous-corps~$\ell$ de~$k$ de type d\'enombrable sur~$k_{p}$ tel que si $\pi' : \D^{p-1}_{k}(\br') \to \D^{p-1}_{\ell}(\br')$ d\'esigne le morphisme de changement de base, alors~$y$ est l'unique point du bord de Shilov de la fibre~$\pi'^{-1}(\pi'(y))$. Nous pouvons supposer que~$\ell$ contient~$\ell_{0}$. Quitte \`a le remplacer par le compl\'et\'e d'une cl\^oture alg\'ebriquement, nous pouvons \'egalement supposer qu'il est alg\'ebriquement clos.

Remarquons qu'il suffit de d\'emontrer le r\'esultat pour le corps~$\ell'=\ell$. Notons $\pi : \D^{p}_{k}(\br) \to \D^{p}_{\ell}(\br)$ le morphisme de changement de base. Puisque~$\ell$ est alg\'ebriquement clos, d'apr\`es le corollaire~\ref{cor:cime}, le point~$\pi(x)$ est universel et le bord de Shilov de la fibre~$\pi^{-1}(\pi(x))$ poss\`ede un unique point, que nous noterons~$z$. Tout \'el\'ement de $k[T_{1},\ldots,T_{p-1}]$ atteint son maximum sur~$\pi^{-1}(\pi(x))$ en~$z$. Par cons\'equent, $\varphi(z)=y$. Pour tout~$n$, il existe un domaine affino\"{\i}de~$W_{n}$ de~$\D^1_{\Hs(\pi'(y))}(r_{p})$ tel que $\pi^{-1}(W_{n}) = V_{n}$. Pour tout~$n$, le point~$\pi(x)$ appartient \`a~$W_{n}$ et le point~$z$ appartient donc \`a~$V_{n}$. Par cons\'equent, pour tout $P \in k[T_{1},\ldots,T_{p}]$, nous avons
\[|P(z)| \le \inf_{n\ge 0} (\|P\|_{V_{n}\cap \varphi^{-1}(y)}) = \inf_{n\ge 0} (|P(\gamma_{n})|) = |P(x)|,\]
d'o\`u l'on d\'eduit l'\'egalit\'e des points~$z$ et~$x$.
\end{proof}

\begin{rem}
On peut adapter le raisonnement pour montrer que les points d'Abhyankar sont denses dans~$\D^p_{k}(\br)$ et obtenir ainsi une preuve \'el\'ementaire du corollaire~\ref{cor:Abhyankardense} dans le cas d'un polydisque, puis dans le cas g\'en\'eral par extension des scalaires et normalisation de Noether.
\end{rem}

\section{Applications \`a la topologie des espaces de Berkovich}\label{section:topo}

Nous allons maintenant tirer quelques cons\'equences topologiques des r\'esultats que nous avons d\'emontr\'e, ainsi que nous l'avons annonc\'e dans l'introduction. Le r\'esultat dont d\'ecouleront tous les autres est celui qui assure qu'un point adh\'erent \`a une partie est limite d'une suite de points de cette partie. Les espaces topologiques v\'erifiant cette propri\'et\'e portent un nom.

\begin{defi}
Un espace topologique~$X$ est dit {\bf de Fr\'echet-Urysohn} si pour toute partie~$A$ de~$X$, tout point de l'adh\'erence~$\bar{A}$ de~$A$ est limite d'une suite de points de~$A$.
\end{defi}

Dans un premier temps, nous nous int\'eresserons aux disques d\'efinis sur un corps alg\'ebriquement clos, afin de pouvoir utiliser directement les r\'esultats de la section~\ref{section:universeldisque}.

\begin{prop}\label{prop:cascime}
Soient~$p\in\N$ et~$\br \in (\R_{+}^*)^p$. Supposons que le corps~$k$ est alg\'ebriquement clos. Alors l'espace~$\D^p_{k}(\br)$ est un espace de Fr\'echet-Urysohn.
\end{prop}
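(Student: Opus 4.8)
The idea is to bring the problem down, via Theorem~\ref{thm:exttf}, to a metrizable polydisc over a subfield of countable type — but to do so iteratively, so as to absorb a whole sequence of approximating points of~$A$ — and then to lift the resulting convergent sequence back up by means of the canonical section $\sigma_{k/\ell}$, which is continuous in our situation precisely because over an algebraically closed field every point is universal (Corollaries~\ref{cor:cime} and~\ref{cor:remonte}).

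So let $A\subseteq X:=\D^p_{k}(\br)$ and $x\in\bar A$. I would build by induction an increasing chain of algebraically closed subfields $\ell^{(0)}\subseteq\ell^{(1)}\subseteq\cdots$ of~$k$, each of countable type over~$k_{p}$, together with points $a_{m}\in A$: take $\ell^{(0)}$ to be (an algebraically closed field containing) the field attached to~$x$ by Theorem~\ref{thm:exttf}; write $\pi^{(m)}\colon X\to Y^{(m)}:=\D^p_{\ell^{(m)}}(\br)$ for the base-change map, recall that $Y^{(m)}$ is metrizable, and fix metrics on the $Y^{(m)}$ making each projection $Y^{(m)}\to Y^{(N)}$ with $N\le m$ non-expanding. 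Given $\ell^{(m-1)}$: since $x\in\bar A$ and $\pi^{(m-1)}$ is continuous, $\pi^{(m-1)}(x)$ is adherent to $\pi^{(m-1)}(A)$, so I would pick $a_{m}\in A$ with $\pi^{(m-1)}(a_{m})$ at distance $<1/m$ from $\pi^{(m-1)}(x)$, and then take $\ell^{(m)}$ algebraically closed of countable type containing $\ell^{(m-1)}$ together with the field attached to~$a_{m}$ by Theorem~\ref{thm:exttf}. Finally I would set $\ell:=\widehat{\bigcup_{m}\ell^{(m)}}$, which is again algebraically closed and of countable type over~$k_{p}$, and $\pi:=\pi_{\ell}\colon X\to Y:=\D^p_{\ell}(\br)$.

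Two properties then hold. On one hand $\pi(a_{m})\to\pi(x)$ in~$Y$: testing convergence on polynomials with coefficients in the various $\ell^{(N)}$, this reduces to $\pi^{(N)}(a_{m})\to\pi^{(N)}(x)$ in $Y^{(N)}$, which holds because for $m>N$ the distance is $<1/m$ by the non-expansiveness. On the other hand, since $\ell$ contains~$\ell^{(0)}$ and the field attached to each~$a_{m}$, the ``for every intermediate field'' clause of Theorem~\ref{thm:exttf} shows that $x$ and every $a_{m}$ is the unique point of the Shilov boundary of its $\pi$-fibre. As $\ell$ is algebraically closed, every point of~$Y$ is universal (Corollary~\ref{cor:cime}), so $\sigma_{k/\ell}\colon Y\to X$ is a well-defined \emph{continuous} section of~$\pi$ (Corollary~\ref{cor:remonte}), and for $y\in Y$ the point $\sigma_{k/\ell}(y)$ is exactly the unique point of the Shilov boundary of $\pi^{-1}(y)$. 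Hence $\sigma_{k/\ell}(\pi(x))=x$ and $\sigma_{k/\ell}(\pi(a_{m}))=a_{m}$, and applying the continuous map $\sigma_{k/\ell}$ to $\pi(a_{m})\to\pi(x)$ yields $a_{m}\to x$ in~$X$. Since $(a_{m})$ is a sequence in~$A$, this proves that $\D^p_{k}(\br)$ is Fr\'echet--Urysohn.

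The heart of the matter is producing a single field~$\ell$ that at the same time makes the quotient metrizable \emph{and} realises~$x$ together with all the chosen approximants $a_{m}$ as the tops of their fibres: a one-step descent is not enough, because a point of~$A$ lying above $\pi(a_{m})$ may sit on the wrong branch of its fibre, so that the naive lift of a downstairs sequence need not converge to~$x$; the interleaved construction — choose an approximant, then enlarge the base field to ``see'' it — is precisely what repairs this. The remaining ingredients (stability of countable type under the field operations used, metrizability of polydiscs over countable-type fields, and the description of the fibres of base change as $\Mc(\Hs(\cdot)\ho_{\ell}k)$) belong to the ambient theory.
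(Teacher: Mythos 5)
Your proof is correct and follows essentially the same route as the paper's: an interleaved induction producing an increasing chain of algebraically closed subfields of countable type (absorbing at each step the field attached by Theorem~\ref{thm:exttf} to a newly chosen approximant in~$A$), passage to the completed union~$\ell$, convergence of the projected sequence in~$\D^p_{\ell}(\br)$, and lifting via $\sigma_{k/\ell}$ using Corollaries~\ref{cor:cime} and~\ref{cor:remonte}, with Theorem~\ref{thm:exttf} identifying the lifts of $\pi(x)$ and $\pi(a_m)$ with $x$ and $a_m$. The only cosmetic difference is that you organize the countability downstairs through compatible (non-expanding) metrics, whereas the paper uses nested fundamental systems of neighbourhoods pulled back along the base changes; the two devices play exactly the same role.
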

\begin{proof}
Soient~$A$ une partie de~$\D^p_{k}(\br)$, que nous pouvons supposer non vide, et~$x$ un point adh\'erent \`a~$A$. Nous voulons montrer que le point~$x$ est limite d'une suite de points de~$A$. Con\-si\-d\'e\-rons le sous-corps~$\ell$ de~$k$ associ\'e \`a ce point par le th\'eor\`eme~\ref{thm:exttf}. C'est une extension de type d\'enombrable fini du sous-corps premier~$k_{p}$ de~$k$. 

Pour tout entier $n\ge 0$, nous allons construire par r\'ecurrence un corps~$\ell_{n}$, une suite d\'ecroissante~$(V_{n}^{(m)})_{m\ge 0}$ de parties de~$\D^p_{\ell_{n}}(\br)$ et des points~$x_{n}$ et~$y_{n}$ de~$\D^p_{\ell_{n}}(\br)$ qui v\'erifient les propri\'et\'es suivantes : pour tout $n\ge 0$,
\begin{enumerate}[\it i)]
\item le corps~$\ell_{n}$ est un sous-corps alg\'ebriquement clos de~$k$ de type d\'enombrable sur~$\ell$ (et donc sur~$k_{p}$) ;
\item le point~$x_{n}$ est le projet\'e du point~$x$ sur~$\D^p_{\ell_{n}}$ ;
\item la suite~$(V_{n}^{(m)})_{m\ge 0}$ forme un syst\`eme fondamental de voisinages du point~$x_{n}$ ;
\item le point~$y_{n}$ est rationnel sur~$\ell_{n}$, appartient \`a~$V_{n}^{(n)}$ et est le projet\'e d'un point~$y'_{n}$ de~$A$ ;
\end{enumerate}
et, pour tous $n'\ge n \ge 0$,
\begin{enumerate}
\item[\it v)] $\ell_{n} \subset \ell_{n'}$ (nous noterons $\pi_{n',n} : \D^p_{\ell_{n'}} \to \D^p_{\ell_{n}}$ la projection associ\'ee) ;
\item[\it vi)] pour tout $m\ge 0$, $V_{n'}^{(m)} \subset \pi_{n',n}^{-1}(V_{n}^{(m)})$.
\end{enumerate}

Initialisons la r\'ecurrence. Pour cela, on choisit un point~$y'_{0}$ de~$A$. Il existe un sous-corps~$\ell_{0}$ de~$k$ de type d\'enombrable sur~$\ell$ qui contient le corps associ\'e au point~$y'_{0}$ par le th\'eor\`eme~\ref{thm:exttf}. Notons~$y_{0}$ la projection de ce point sur~$\D^p_{\ell_{0}}$. Quitte \`a remplacer~$\ell_{0}$ par le compl\'et\'e de sa fermeture alg\'ebrique dans~$k$, nous pouvons supposer que c'est un corps alg\'ebriquement clos. Posons $V_{0}^{(0)} = \D^p_{\ell_{0}}(\br)$ et notons~$x_{0}$ le projet\'e du point~$x$ sur~$\D^p_{\ell_{0}}(\br)$. On choisit ensuite une suite d\'ecroissante $(V_{0}^{(m)})_{m\ge 1}$ de parties de~$\D^p_{\ell_{0}}(\br)$ qui forme un syst\`eme fondamental de voisinages du point~$x_{0}$.

Soit~$n\ge 0$ et supposons avoir construit les objets du rang~$0$ au rang~$n$ de sorte qu'ils v\'erifient les propri\'et\'es demand\'ees. La partie~$V_{n}^{(n+1)}$ de $\D^p_{\ell_{n}}$ est un voisinage de~$x_{n}$. Son image r\'eciproque dans $\D^p_{k}(\br)$ est un voisinage de~$x$ et elle contient donc un point~$y'_{n+1}$ de~$A$. Il existe un sous-corps~$\ell_{n+1}$ de~$k$ de type d\'enombrable sur~$\ell_{n}$ (et donc sur~$\ell$) qui contient le corps associ\'e au point~$y'_{n+1}$ par le th\'eor\`eme~\ref{thm:exttf}. Notons~$y_{n+1}$ la projection de ce point sur~$\D^p_{\ell_{n+1}}(\br)$. Quitte \`a remplacer~$\ell_{n+1}$ par le compl\'et\'e de sa fermeture alg\'ebrique dans~$k$, nous pouvons supposer que c'est un corps alg\'ebriquement clos. Pour $m\le n+1$, posons $V_{n+1}^{(m)} = \pi_{n+1,n}^{-1}(V_{n}^{(m)})$. Notons~$x_{n+1}$ le projet\'e du point~$x$ sur~$\D^p_{\ell_{n+1}}(\br)$. Nous choisissons ensuite une suite d\'ecroissante $(W_{n+1}^{(m)})_{m\ge n+2}$ de parties de~$\D^p_{\ell_{n+1}}(\br)$ qui forme un syst\`eme fondamental de voisinages du point~$x_{n+1}$. Finalement, pour $m\ge n+2$, nous posons $V_{n+1}^{(m)} = W_{n+1}^{(m)}\cap \pi_{n+1,n}^{-1}(V_{n}^{(m)})$.\\

Notons~$\ell'_{\infty}$ le sous-corps de~$k$ engendr\'e par les~$\ell_{n}$, pour~$n\ge 0$, et~$\ell_{\infty}$ son compl\'et\'e. Ces deux corps sont alg\'ebriquement clos. Notons~$x_{\infty}$ le projet\'e du point~$x$ sur~$\D^p_{\ell_{\infty}}(\br)$. Pour $n\ge 0$, nous noterons~$z_{n}$ le projet\'e du point~$y'_{n}$ sur~$\D^p_{\ell_{\infty}}(\br)$ et $\pi_{n} : \D^p_{\ell_{\infty}}(\br) \to \D^p_{\ell_{n}}(\br)$ le morphisme de changement de base. Soit~$U$ un voisinage de~$x_{\infty}$ dans~$\D^p_{\ell_{\infty}}(\br)$. Il existe un entier~$n'$ et un voisinage~$V$ de~$\pi_{n'}(x_{\infty})=x_{n'}$ dans~$\D^p_{\ell_{n'}}(\br)$ tel que $\pi_{n'}^{-1}(V)=U$. Il existe donc un entier $n''\ge n'$ tel que $V_{n'}^{(n'')} \subset V$. Par cons\'equent, quel que soit $n\ge n''$, nous avons $\pi_{n}^{-1}(V_{n}^{(n)}) \subset U$ et donc $z_{n} \in U$. La suite $(z_{n})_{n\ge 0}$ de~$\D^p_{\ell_{\infty}}(\br)$ converge donc vers~$x_{\infty}$.\\  

D'apr\`es le corollaire~\ref{cor:cime}, les points~$z_{n}$ et le point~$x_{\infty}$ sont universels sur~$\ell_{\infty}$. On d\'eduit du corollaire~\ref{cor:remonte} que la suite $(\sigma_{k}(z_{n}))_{n\ge 0}$ tend vers~$\sigma_{k}(x_{\infty})$. Or, pour tout~$n$, $\sigma_{k}(z_{n}) = y'_{n}$, car~$\ell_{\infty}$ contient~$\ell_{n}$ et $\sigma_{k}(x_{\infty})=x$, car~$\ell_{\infty}$ contient~$\ell$.
\end{proof}

\begin{thm}\label{thm:suite}
Tout espace $k$-analytique est un espace de Fr\'echet-Urysohn.
\end{thm}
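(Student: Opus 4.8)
The plan is to reduce the statement, in a few formal steps, to Proposition~\ref{prop:cascime}.

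First I would record the elementary stability properties of the Fréchet-Urysohn condition. It is inherited by arbitrary subspaces (if a point is adherent to $A$ inside a subspace, it is adherent to $A$ in the ambient space, and a sequence of points of $A$ converging to it there still converges in the subspace), and a finite union $X=F_{1}\cup\dots\cup F_{n}$ of \emph{closed} Fréchet-Urysohn subspaces is again Fréchet-Urysohn: for $x\in\overline{A}$ one has $\overline{A}=\bigcup_{i}\overline{A\cap F_{i}}$, hence $x$ lies in some $\overline{A\cap F_{i}}$, which is the closure of $A\cap F_{i}$ computed inside $F_{i}$, and there the property applies. Finally the condition is local for the open topology: if $U$ is an open Fréchet-Urysohn subset and $x\in U\cap\overline{A}$, then $x\in\overline{A\cap U}$ and one concludes inside $U$. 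Since every point of a $k$-analytic space has a neighbourhood which is a finite union of $k$-affinoid domains, these remarks reduce the theorem to the case where $X$ is $k$-affinoid.

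Next I would reduce affinoid spaces to polydiscs. Writing a $k$-affinoid algebra $\As$ as a quotient $k\{\br^{-1}\bT\}/I$ exhibits $X=\Mc(\As)$ as a Zariski-closed, hence topologically closed, subspace of the polydisc $\D^n_{k}(\br)$. As closed subspaces of Fréchet-Urysohn spaces are Fréchet-Urysohn, it suffices to treat $\D^n_{k}(\br)$ for an arbitrary complete valued field $k$ and an arbitrary polyradius $\br$. To handle this, let $K=\widehat{\overline{k}}$ be the completion of an algebraic closure of $k$; it is a complete algebraically closed valued field, so Proposition~\ref{prop:cascime} tells us that $\D^n_{K}(\br)$ is Fréchet-Urysohn. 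The extension-of-scalars map $\pi\colon \D^n_{K}(\br)\to\D^n_{k}(\br)$ is surjective (\emph{cf.}~\cite{rouge}) and, being continuous between compact Hausdorff spaces — both polydiscs are affinoid, hence compact — it is closed. Given $A\subseteq\D^n_{k}(\br)$ and $x\in\overline{A}$, the set $\pi(\overline{\pi^{-1}(A)})$ is closed and contains $\pi(\pi^{-1}(A))=A$, hence contains $x$; choose $\tilde{x}\in\overline{\pi^{-1}(A)}$ with $\pi(\tilde{x})=x$. Applying the Fréchet-Urysohn property of $\D^n_{K}(\br)$ to the subset $\pi^{-1}(A)$ produces a sequence $(\tilde{x}_{m})$ of points of $\pi^{-1}(A)$ converging to $\tilde{x}$, and then $(\pi(\tilde{x}_{m}))$ is a sequence of points of $A$ converging to $x$.

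The substance of the proof lies entirely in Proposition~\ref{prop:cascime} (which itself rests on the descent to countable-type subfields of Theorem~\ref{thm:exttf} and on the canonical, continuous lifting of points over algebraically closed fields provided by Corollaries~\ref{cor:cime} and~\ref{cor:remonte}); the steps above are soft. The only points that need a little attention are the topological bookkeeping in the first reduction — especially the behaviour of Fréchet-Urysohn subspaces under finite closed unions and the passage from the local structure of a $k$-analytic space to the affinoid case — and the use of compactness of affinoid spaces to ensure that the extension-of-scalars map between polydiscs is a closed map, which is what makes the descent along $K/k$ work.
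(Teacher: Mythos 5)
Your proof is correct and follows essentially the same route as the paper: everything is reduced to Proposition~\ref{prop:cascime} by lifting an adherent point along the closed, surjective extension-of-scalars map to an algebraically closed field and pushing the resulting sequence of points back down. The only difference is the order of the reductions — the paper extends scalars first for the whole space (using that $X_{K}\to X$ is topologically proper, hence closed, between locally compact spaces) and only then localizes to a polydisc, whereas you localize first and justify closedness of the base-change map by compactness of the polydiscs — an inessential variation.
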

\begin{proof}
Soient~$X$ un espace $k$-analytique et~$A$ une partie de~$X$. Nous souhaitons montrer que tout point adh\'erent \`a~$A$ est limite d'une suite de points de~$A$.

Soit~$K$ une extension alg\'ebriquement close du corps~$k$. Notons $\pi : X\ho_{k} K \to X$ le morphisme de changement de base. Il suffit de d\'emontrer le r\'esultat pour l'espace~$X\ho_{k} K$ et la partie~$\pi^{-1}(A)$. En effet, le morphisme~$\pi$, en tant que morphisme topologiquement propre entre espaces localement compacts, est ferm\'e. Tout point adh\'erent~$x$ \`a~$A$ poss\`ede donc une pr\'eimage~$y$ dans l'adh\'erence de~$\pi^{-1}(A)$. S'il existe une suite~$(y_{n})_{n\ge 0}$ de points de~$\pi^{-1}(A)$ qui tend vers~$y$, alors la suite~$(\pi(y_{n}))_{n\ge 0}$ tend vers~$x$. Nous pouvons donc supposer que le corps~$k$ est alg\'ebriquement clos.

Soit~$x$ un point adh\'erent \`a~$A$. Il poss\`ede un voisinage qui est r\'eunion finie de domaines affino\"{\i}des. Quitte \`a remplacer~$X$ par l'un de ces affino\"{\i}des et~$A$ par sa trace sur icelui, nous pouvons supposer que~$X$ est affino\"{\i}de. C'est donc un ferm\'e de Zariski d'un disque et nous pouvons finalement supposer que $X = \D^p_{k}(\br)$. Nous concluons alors par la proposition~\ref{prop:cascime}.
\end{proof}

Les espaces de Fr\'echet-Urysohn font partie des espaces dits s\'equentiels : les parties ouvertes et ferm\'ees peuvent y \^etre caract\'eris\'ees par des suites, dans le sens que nous pr\'ecisons ci-dessous.

\begin{defi}
Soit~$X$ un espace topologique. 
\begin{enumerate}[\it i)]
\item Une partie~$A$ de~$X$ est dite {\bf s\'equentiellement ouverte} si tout suite d'\'el\'e\-ments de~$X$ qui converge vers un point de~$A$ ne poss\`ede qu'un nombre fini de termes hors de~$A$.
\item Une partie~$A$ de~$X$ est dite {\bf s\'equentiellement ferm\'ee} si tout point de~$X$ qui est limite d'une suite d'\'el\'ements de~$A$ appartient \`a~$A$.
\end{enumerate}
\end{defi}

\begin{cor}
Soient~$X$ un espace $k$-analytique et~$A$ une partie de~$X$.
\begin{enumerate}[\it i)]
\item La partie~$A$ est ouverte si, et seulement si, elle est s\'equentiellement ouverte.
\item La partie~$A$ est ferm\'ee si, et seulement si, elle est s\'equentiellement ferm\'ee.
\end{enumerate}
\end{cor}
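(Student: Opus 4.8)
The plan is to deduce both statements from the Fr\'echet--Urysohn property established in Theorem~\ref{thm:suite}, point~{\it ii)} being the substantial one and point~{\it i)} following by passing to complements.

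For~{\it ii)}, one implication is immediate: if~$A$ is ferm\'ee and~$x$ is the limit of a sequence~$(a_{n})_{n\ge 0}$ of points of~$A$, then~$x$ is adh\'erent \`a~$A$, hence~$x\in\bar{A}=A$; thus~$A$ is s\'equentiellement ferm\'ee. Conversely, suppose~$A$ is s\'equentiellement ferm\'ee and let~$x\in\bar{A}$. By Theorem~\ref{thm:suite}, $X$~is a Fr\'echet--Urysohn space, so~$x$ is the limit of a sequence of points of~$A$; since~$A$ is s\'equentiellement ferm\'ee, $x\in A$. Hence~$\bar{A}=A$ and~$A$ is ferm\'ee.

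For~{\it i)}, I would first record the elementary fact that a subset~$B$ of an arbitrary topological space is s\'equentiellement ouverte if and only if its complement~$X\setminus B$ is s\'equentiellement ferm\'ee. Indeed, if~$B$ is s\'equentiellement ouverte and~$(a_{n})_{n\ge 0}$ is a sequence of~$X\setminus B$ converging to some~$x$, then~$x\notin B$ (were~$x\in B$, the sequence converging to~$x$ would have infinitely many, in fact all, terms outside~$B$, contradicting sequential openness), so~$X\setminus B$ is s\'equentiellement ferm\'ee. Conversely, if~$X\setminus B$ is s\'equentiellement ferm\'ee and~$(x_{n})_{n\ge 0}$ converges to~$x\in B$ with infinitely many terms outside~$B$, extracting the subsequence of those terms produces a sequence of~$X\setminus B$ converging to~$x$, whence~$x\in X\setminus B$, a contradiction. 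Granting this, $A$~is ouverte if and only if~$X\setminus A$ is ferm\'ee, if and only if (by~{\it ii)}) $X\setminus A$ is s\'equentiellement ferm\'ee, if and only if~$A$ is s\'equentiellement ouverte.

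There is no genuine obstacle in this corollary: the entire content sits in Theorem~\ref{thm:suite}, and the only step requiring a moment's care is the extraction of a subsequence lying entirely in the complement, used to pass between sequential openness of~$A$ and sequential closedness of~$X\setminus A$.
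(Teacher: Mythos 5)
Your argument is correct and coincides with what the paper intends: the corollary is a direct consequence of Theorem~\ref{thm:suite} via the standard fact that Fr\'echet--Urysohn spaces are s\'equentiels, point~{\it ii)} being the substantive one and point~{\it i)} following by complementation exactly as you write. The paper leaves this routine deduction implicit, and your careful treatment of the subsequence extraction fills it in correctly.
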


Pour d'autres r\'esultats sur les espaces de Fr\'echet-Urysohn et les espaces s\'equentiels, nous renvoyons \`a l'article~\cite{sequences}. S.~P.~Franklin y d\'emontre notamment que tout espace s\'equentiel est quotient d'un espace m\'etrique. Ce r\'esultat vaut donc en particulier pour les espaces $k$-analytiques.

En combinant le th\'eor\`eme~\ref{thm:suite} et le corollaire~\ref{cor:Abhyankardense}, nous obtenons le r\'esultat suivant.

\begin{cor}
L'ensemble des points d'Abhyankar d'un espace analytique est s\'equentiellement dense.
\end{cor}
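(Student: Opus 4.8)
Le plan est de combiner directement les deux résultats mentionnés dans l'indication, sans aucun argument supplémentaire. Rappelons qu'une partie~$A$ d'un espace topologique~$X$ est dite \emph{séquentiellement dense} lorsque tout point de~$X$ est limite d'une suite de points de~$A$ ; c'est donc exactement ce que l'on cherche à établir pour l'ensemble~$A$ des points d'Abhyankar d'un espace analytique~$X$.

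Je procéderais en deux temps. D'abord, j'invoquerais le corollaire~\ref{cor:Abhyankardense}, qui affirme que~$A$ est dense dans~$X$, autrement dit que son adhérence~$\bar{A}$ est l'espace~$X$ tout entier. Ensuite, j'appliquerais le théorème~\ref{thm:suite} : l'espace~$X$ étant $k$-analytique, il est de Fréchet-Urysohn, de sorte que, par définition, tout point de l'adhérence~$\bar{A}$ est limite d'une suite de points de~$A$. Comme $\bar{A}=X$, on conclut que tout point de~$X$ est limite d'une suite de points de~$A$, c'est-à-dire que~$A$ est séquentiellement dense dans~$X$.

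Je ne vois pas ici de véritable obstacle : tout le travail a été accompli dans les sections précédentes, et la seule difficulté est terminologique, à savoir le passage de « dense » à « séquentiellement dense » rendu possible par la propriété de Fréchet-Urysohn. On pourrait d'ailleurs remarquer que ce même schéma d'argument — densité d'une partie remarquable conjuguée au théorème~\ref{thm:suite} — s'applique tout aussi bien à d'autres ensembles denses, par exemple celui des points universels lorsque le corps~$k$ est algébriquement clos (\emph{cf.} corollaire~\ref{cor:cime}), et fournit pour ceux-ci des énoncés de densité séquentielle analogues.
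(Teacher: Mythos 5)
Votre argument est correct et coïncide exactement avec celui du texte : le corollaire y est obtenu en combinant la densité des points d'Abhyankar (corollaire~\ref{cor:Abhyankardense}) et le fait que tout espace $k$-analytique est de Fréchet-Urysohn (théorème~\ref{thm:suite}). Rien à redire.
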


Pour les espaces strictement $k$-analytiques, nous pouvons obtenir un r\'esultat plus pr\'ecis.

\begin{cor}
Supposons que la valuation de~$k$ n'est pas triviale. Posons $c=\dim_{\Q}(\R_{+}^*/\sqrt{|k^*|})$. Soit~$X$ un espace strictement $k$-affino\"{\i}de et notons~$d$ le minimum des dimensions de ses composantes irr\'eductibles. Soient~$s,t\in\N$ tels que $s+t\le d$ et $t\le c$. Alors l'ensemble des points~$x$ de~$X$ v\'erifiant $s(x)=s$ et $t(x)=t$ est dense et s\'equentiellement dense dans~$X$.
\end{cor}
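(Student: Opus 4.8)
The plan is the following. Since every $k$-analytic space is a Fr\'echet--Urysohn space (Theorem~\ref{thm:suite}), sequential density will be an immediate consequence of density, so it suffices to prove that $\Sigma_{s,t}(X):=\{x\in X\mid s_{k}(x)=s,\ t_{k}(x)=t\}$ is dense in $X$. Let $U$ be a nonempty open subset of $X$. It meets some irreducible component $C$ of $X$; as $C$ is a Zariski-closed subset of $X$ it is strictly $k$-affino\"{\i}de, and being irreducible it is equidimensional, of some dimension $e\ge d\ge s+t$. Since $s_{k}$ and $t_{k}$ depend only on the completed residue field, a point of the nonempty open set $U\cap C$ lying in $\Sigma_{s,t}(C)$ is in particular a point of $U$ in $\Sigma_{s,t}(X)$; I am thus reduced to showing that $\Sigma_{s,t}(C)$ is dense whenever $C$ is an irreducible strictly $k$-affino\"{\i}de space of dimension $e$ with $s+t\le e$ and $t\le c$ (for $(s,t)=(d,0)$ this is Corollary~\ref{cor:smaxdense}, which the argument below refines).

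I then reduce further to the case of a polydisc. Replacing $C$ by its reduction, Noether normalization provides a finite surjective morphism $\varphi\colon C\to\D^{e}_{k}$. For every $x\in C$ the extension $\Hs(\varphi(x))\subset\Hs(x)$ is finite, hence $s_{k}(x)=s_{k}(\varphi(x))$ and $t_{k}(x)=t_{k}(\varphi(x))$; in particular $\varphi^{-1}(\Sigma_{s,t}(\D^{e}_{k}))\subset\Sigma_{s,t}(C)$. To transfer density along $\varphi$ I invoke generic flatness: there is a Zariski-dense open subset $V_{0}$ of $\D^{e}_{k}$ over which $\varphi$ is flat, hence open, being finite and flat. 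Its preimage $\varphi^{-1}(V_{0})$ is a dense open subset of $C$, its complement being a proper Zariski-closed subset of the irreducible space $C$. Consequently, for any nonempty open $W\subset C$ the set $\varphi\bigl(W\cap\varphi^{-1}(V_{0})\bigr)$ is a nonempty open subset of $\D^{e}_{k}$; granting that $\Sigma_{s,t}(\D^{e}_{k})$ is dense, it contains a point $z$ with $s_{k}(z)=s$ and $t_{k}(z)=t$, and any preimage of $z$ in $W\cap\varphi^{-1}(V_{0})$ then lies in $\Sigma_{s,t}(C)\cap W$. Everything therefore comes down to the density of $\Sigma_{s,t}(\D^{e}_{k})$ in the polydisc.

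For the polydisc I use that the rigid points are dense in $\D^{e}_{k}$ (here the valuation of $k$ is non-trivial), and I show that each rigid point $z_{0}$ lies in the closure of $\Sigma_{s,t}(\D^{e}_{k})$. Set $k_{0}=\Hs(z_{0})$, a finite extension of $k$, so that $\sqrt{|k_{0}^{*}|}=\sqrt{|k^{*}|}$ and $c$ is unchanged; lift $z_{0}$ to a $k_{0}$-rational point of $\D^{e}_{k_{0}}$ and translate it to the origin. Pick $\rho_{1},\dots,\rho_{s}\in|k^{*}|\cap(0,1)$ and a $k$-free (equivalently $k_{0}$-free) polyray $(r_{1},\dots,r_{t})\in(0,1)^{t}$, which exists because $t\le c$. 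Let $w$ be the Gauss-type point of $\D^{e}_{k_{0}}$ of polyradius $(\rho_{1},\dots,\rho_{s},r_{1},\dots,r_{t})$ in the first $s+t$ coordinates and equal to the origin in the remaining ones. Computing its residue field (the $s$ coordinates of radius in $|k^{*}|$ produce $s$ transcendentals over $\ti{k}$, the others none) and its value group (only the $k$-free coordinates enlarge $\sqrt{|k^{*}|}$, by $t$) shows that $s_{k}(w)=s$ and $t_{k}(w)=t$; and letting all the $\rho_{i}$ and the $r_{i}$ tend to $0$, while keeping the $\rho_{i}$ in $|k^{*}|$ and $(r_{1},\dots,r_{t})$ a $k$-free polyray, the images of these points in $\D^{e}_{k}$ converge to $z_{0}$. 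Hence $z_{0}\in\overline{\Sigma_{s,t}(\D^{e}_{k})}$, and density follows. The main obstacle is the density transfer through the finite morphism $\varphi$: a finite surjective morphism is neither open nor does it send dense open sets to dense sets in general, which is what forces the use of generic flatness; by contrast the behaviour of $s_{k}$ and $t_{k}$ under finite extensions and the explicit computation for the Gauss-type points are routine.
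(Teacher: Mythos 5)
Your proof is correct, but it reaches density by a genuinely different (and heavier) mechanism than the paper. Both arguments share the same skeleton: reduce sequential density to density via Theorem~\ref{thm:suite}, use Noether normalization onto a polydisc, observe that $s_{k}$ and $t_{k}$ are unchanged along the finite extension $\Hs(\varphi(x))\subset\Hs(x)$, and produce monomial (Gauss-type) points with prescribed invariants, which is possible precisely because $s+t\le d$ and $t\le c$. The difference is where the normalization is applied. You normalize a whole irreducible component once, and must then push density of the set $\Sigma_{s,t}$ of good points back through the finite map; this is what forces you to prove density in the polydisc (density of rigid points plus shrinking monomial points around them) and to invoke generic flatness together with the openness of finite flat morphisms of analytic spaces --- correct, but external machinery that the paper never needs. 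The paper instead normalizes each strictly $k$-affino\"{\i}de neighbourhood $U$ of a given point (such neighbourhoods form a basis of neighbourhoods since the valuation is nontrivial): the normalization $\varphi : U \to \D^{d'}_{k}$ with $d'\ge d$ is surjective, so a \emph{single} point of $\D^{d'}_{k}$ with the prescribed invariants automatically has a preimage inside $U$, and finiteness preserves $s$ and $t$. This local use of surjectivity sidesteps entirely the density-transfer problem you identify as the main obstacle, and dispenses with flatness, openness and the approximation argument at rigid points; what your route buys in exchange is the explicit intermediate fact that $\Sigma_{s,t}$ is dense in the polydisc, and a transfer of density through one fixed normalization of the component.
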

\begin{proof}
D'apr\`es le th\'eor\`eme~\ref{thm:suite}, il suffit de montrer que l'ensemble indiqu\'e est dense.

Soient~$x$ un point de~$X$ et~$U$ un voisinage strictement $k$-affino\"{\i}de de~$x$. Sa dimension~$d'$ est n\'ecessairement sup\'erieure \`a~$d$. D'apr\`es le th\'eor\`eme de normalisation de Noether, il existe un morphisme fini et surjectif $\varphi : U \to \D^{d'}_{k}=\D^{d'}_{k}(1,\dots,1)$. Les conditions impos\'ees \`a~$s$ et~$t$ assurent que le disque~$\D^{d'}_{k}$ contient un point~$y$ tel que $s(y)=s$ et $t(y)=t$. Puisque le morphisme~$\varphi$ est fini, tout ant\'ec\'edent~$x$ de~$y$ satisfait les m\^emes \'egalit\'es.
\end{proof}

\begin{rem}
On retrouve ainsi un r\'esultat de C.~Favre (\emph{cf.}~\cite{Countability}, corollary~C) assurant la densit\'e et la densit\'e s\'equentielle des points divisoriels dans les espaces analytiques sur~$k(\!(T)\!)$.
\end{rem}

\'Enon\c{c}ons un autre corollaire frappant. Signalons qu'il est utilis\'e de fa\c{c}on essentielle dans la preuve de l'analogue ultram\'etrique du th\'eor\`eme de Montel par C.~Favre, J.~Kiwi et E.~Trucco (\emph{cf.}~\cite{Montel}).

\begin{cor}
Soit~$X$ un espace $k$-analytique. Toute partie compacte de~$X$ est s\'equentiellement compacte.
\end{cor}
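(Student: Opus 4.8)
Le plan est d'exploiter le fait que l'espace $X$ est de Fréchet-Urysohn (théorème~\ref{thm:suite}) et donc séquentiel : dans un tel espace, la compacité est une notion qui se teste sur les suites. Plus précisément, je partirais d'une partie compacte $C$ de $X$ et d'une suite $(x_n)_{n\ge 0}$ de points de $C$, et je chercherais à en extraire une sous-suite convergente. Le point de départ naturel est de considérer l'ensemble $A$ des valeurs de la suite et de distinguer deux cas : soit $A$ est fini, auquel cas l'une des valeurs est prise une infinité de fois et fournit trivialement une sous-suite (constante) convergente ; soit $A$ est infini. Dans ce second cas, comme $C$ est compact, $A$ possède un point d'accumulation $x$ dans $C$, c'est-à-dire que tout voisinage de $x$ rencontre $A\setminus\{x\}$, donc $x$ est adhérent à $A\setminus\{x\}$.

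L'étape clef consiste alors à invoquer le théorème~\ref{thm:suite} : puisque $X$ est de Fréchet-Urysohn et que $x\in\overline{A\setminus\{x\}}$, il existe une suite $(y_k)_{k\ge 0}$ de points de $A\setminus\{x\}$ qui converge vers $x$. Comme chaque $y_k$ appartient à $A=\{x_n\mid n\ge 0\}$ et est distinct de $x$, on peut pour chaque $k$ choisir un indice $n_k$ tel que $x_{n_k}=y_k$. Il reste à s'assurer qu'on peut faire ces choix de sorte que la suite $(n_k)$ soit strictement croissante, afin d'obtenir une véritable sous-suite de $(x_n)$. C'est possible car l'ensemble $A\setminus\{x\}$ étant infini, chacune de ses valeurs est atteinte par les $x_n$ sur un ensemble d'indices qu'on peut choisir disjoints, ou plus simplement : si une valeur $y_k$ n'était atteinte que pour un nombre fini d'indices et que ces valeurs épuisaient $A\setminus\{x\}$ à partir d'un certain rang, $A$ serait fini ; on construit donc les $n_k$ par récurrence en évitant à chaque étape les indices déjà utilisés. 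La sous-suite $(x_{n_k})_{k\ge 0}$ ainsi obtenue converge vers $x\in C$, ce qui établit la compacité séquentielle.

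Le principal obstacle, qui est en réalité modeste, est le passage de la convergence de $(y_k)$ vers $x$ à l'existence d'une \emph{sous-suite} (et non simplement d'une suite de valeurs) de $(x_n)$ convergeant vers $x$ : il faut gérer proprement la combinatoire des indices et le cas où certaines valeurs de la suite $(x_n)$ se répètent. Tout le contenu mathématique réel est concentré dans l'application du théorème~\ref{thm:suite}, le reste relevant de manipulations topologiques élémentaires sur les espaces séquentiels (on pourrait d'ailleurs invoquer directement le fait classique qu'un espace séquentiel dénombrablement compact est séquentiellement compact, mais la preuve directe esquissée ci-dessus est tout aussi rapide et autonome).
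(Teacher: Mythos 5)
Votre d\'emonstration est correcte et suit essentiellement la m\^eme voie que le texte : la compacit\'e fournit une valeur d'adh\'erence de la suite (le texte se contente de noter que l'ensemble des valeurs d'adh\'erence est un compact non vide), puis le th\'eor\`eme~\ref{thm:suite} donne une suite de points de l'ensemble des valeurs convergeant vers ce point, dont on tire une sous-suite de la suite initiale. Une seule retouche, mineure, dans la justification de l'extraction des indices : l'argument \og sinon $A$ serait fini \fg\ n'est pas le bon, car si les valeurs~$y_{k}$ restaient \`a partir d'un certain rang dans un ensemble fini, c'est la convergence $y_{k}\to x$ avec $y_{k}\ne x$ qui serait mise en d\'efaut (l'espace \'etant localement s\'epar\'e, ses points sont ferm\'es, de sorte qu'une suite constante de valeur distincte de~$x$ ne peut converger vers~$x$), et non l'infinitude de~$A$.
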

\begin{proof}
Soit~$A$ une partie compacte de~$X$. Soit $(a_{n})_{n\in\N}$ une suite de points de~$A$. L'ensemble des valeurs d'adh\'erence de la suite $(a_{n})_{n\in\N}$ est un compact non vide et on conclut par le th\'eor\`eme~\ref{thm:suite}.
\end{proof}

Pour finir, nous allons montrer que les espaces $k$-analytiques satisfont une propri\'et\'e suppl\'ementaire, celle d'\^etre ang\'eliques. Rappelons tout d'abord qu'un espace topologique est dit $\omega$-compact lorsqu'il est s\'epar\'e et que toute suite poss\`ede une valeur d'adh\'erence. Cette derni\`ere condition \'equivaut \`a demander que de tout recouvrement ouvert d\'enombrable, on puisse extraire un recouvrement fini. En analyse fonctionnelle, le th\'eor\`eme d'Eberlein-\v{S}mulian assure que les notions de compacit\'e, compacit\'e s\'equentielle et $\omega$-compacit\'e sont \'equivalentes pour les parties d'un espace de Banach r\'eel muni de la topologie faible. La propri\'et\'e d'\^etre ang\'elique g\'en\'eralise les conclusions de ce th\'eor\`eme et A.~Grothendieck a notamment montr\'e dans~\cite{Grothendieck52}, que l'espace~$\Cs(K)$ muni de la convergence ponctuelle est ang\'elique, lorsque~$K$ est un espace $\omega$-compact. Nous renvoyons \`a l'ouvrage~\cite{Floret} pour plus d'informations sur cette notion.

\begin{defi}
Un espace topologique~$X$ est dit {\bf ang\'elique} si, pour toute partie relativement $\omega$-compacte~$A$ de~$X$, nous avons 
\begin{enumerate}[\it i)]
\item $A$ est relativement compacte\ ;
\item tout point de l'adh\'erence~$\bar{A}$ de~$A$ est limite d'une suite de points de~$A$.
\end{enumerate}
\end{defi}

\begin{prop}
Soit~$X$ un espace $k$-analytique dont la topologie est s\'epar\'ee. Toute partie $\omega$-compacte de~$X$ est ferm\'ee.
\end{prop}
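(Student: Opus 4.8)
The plan is to derive this from the Fr\'echet--Urysohn property of Theorem~\ref{thm:suite} together with the separation assumption, with no genuinely new input. Let $A$ be an $\omega$-compact subset of $X$ and let $x$ be a point in the closure of $A$; we must show $x\in A$. Since $X$ is Fr\'echet--Urysohn by Theorem~\ref{thm:suite}, there is a sequence $(a_{n})_{n\ge 0}$ of points of $A$ converging to $x$ in $X$.

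Next I would use the $\omega$-compactness of $A$. By definition it means that $A$ is separated and that every sequence of points of $A$ admits a cluster value in $A$ (equivalently, that $A$ is countably compact); applying this to $(a_{n})_{n\ge 0}$ produces a point $a\in A$ which is a cluster value of this sequence in $A$. Because $A$ carries the topology induced from $X$, the point $a$ is still a cluster value of $(a_{n})_{n\ge 0}$ regarded as a sequence in $X$: every neighbourhood of $a$ in $X$ meets $A$ in a neighbourhood of $a$ in $A$, hence contains $a_{n}$ for infinitely many indices $n$.

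It remains to identify $a$ with $x$, and this is where the hypothesis that $X$ is separated enters. In a Hausdorff space a convergent sequence has a unique cluster value, namely its limit: if $a\ne x$, choose disjoint open neighbourhoods $U$ of $x$ and $V$ of $a$; as $a_{n}\to x$, all but finitely many terms lie in $U$, hence outside $V$, so $V$ contains only finitely many $a_{n}$, contradicting that $a$ is a cluster value. Therefore $a=x$, and consequently $x=a\in A$, which shows that $A$ is closed. I do not expect any real obstacle here; the only points deserving a word of care are the standard equivalence between $\omega$-compactness and the existence of a cluster value for every sequence, the fact that a cluster value computed inside the subspace $A$ remains one in $X$, and the observation that it is the Hausdorffness of $X$ itself --- not merely of $A$, which comes for free --- that forces the cluster value to be the limit; this is precisely why the assumption on the topology of $X$ cannot be dropped.
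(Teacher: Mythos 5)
Votre preuve est correcte et suit exactement la même démarche que celle de l'article : on produit une suite de $A$ convergeant vers $x$ grâce au théorème~\ref{thm:suite}, puis l'$\omega$-compacité de $A$ fournit une valeur d'adhérence dans $A$, qui ne peut être que $x$ par séparation de $X$. Votre rédaction explicite simplement les deux points que l'article laisse implicites (une valeur d'adhérence dans le sous-espace $A$ en reste une dans $X$, et c'est bien la séparation de $X$ qui force l'unicité).
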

\begin{proof}
Soient~$A$ une partie $\omega$-compacte de~$X$ et~$x$ un point de~$\bar{A}$. D'apr\`es le th\'eor\`eme~\ref{thm:suite}, il existe une suite $(x_{n})_{n\ge 0}$ de~$A$ qui tend vers~$x$. Puisque~$A$ est $\omega$-compacte, le point~$x$, qui est l'unique valeur d'adh\'erence de la suite $(x_{n})_{n\ge 0}$, appartient \`a~$A$. On en d\'eduit que $A=\bar{A}$.
\end{proof}

\begin{cor}
Soit~$X$ un espace $k$-analytique. Soient~$A$ et~$B$ deux parties de~$X$ telles que~$A\subset B$. Supposons que~$A$ est $\omega$-compacte et~$B$ compacte. Alors~$A$ est compacte.
\end{cor}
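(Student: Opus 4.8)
La strat\'egie consiste \`a reprendre la preuve de la proposition pr\'ec\'edente, en rempla\c{c}ant la s\'eparation (\'eventuellement absente) de~$X$ par la compacit\'e de~$B$.

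Comme~$A$ est $\omega$-compacte, elle est s\'epar\'ee, de sorte qu'il suffit d'\'etablir sa quasi-compacit\'e. D'apr\`es le th\'eor\`eme~\ref{thm:suite}, l'espace~$A$, sous-espace de l'espace de Fr\'echet-Urysohn~$X$, est lui-m\^eme de Fr\'echet-Urysohn ; \'etant de plus $\omega$-compact, il est s\'equentiellement compact. Remarquons que si~$B$ \'etait s\'epar\'e, on conclurait imm\'ediatement : pour tout point~$x$ adh\'erent \`a~$A$ dans~$B$, le th\'eor\`eme~\ref{thm:suite} fournirait une suite de~$A$ tendant vers~$x$, sa valeur d'adh\'erence dans~$A$ (fournie par l'$\omega$-compacit\'e) serait \'egale \`a~$x$ par unicit\'e de la limite, d'o\`u~$x\in A$ ; ainsi~$A$ serait ferm\'ee dans le compact~$B$, donc quasi-compacte.

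Dans le cas g\'en\'eral, il faut contourner le d\'efaut de s\'eparation. L'espace~$B$ \'etant compact, chacun de ses points poss\`ede un voisinage affino\"{\i}de, et l'on en extrait un recouvrement fini de~$B$ par les int\'erieurs $\mathring{W}_{1},\dots,\mathring{W}_{n}$ de domaines affino\"{\i}des~$W_{1},\dots,W_{n}$ de~$X$ ; en particulier~$A$ est contenue dans la r\'eunion des~$\mathring{W}_{j}$, et $W:=W_{1}\cup\dots\cup W_{n}$ est un espace $k$-analytique quasi-compact contenant~$A$. On se ram\`ene alors \`a montrer que~$A$ est ferm\'ee dans~$W$. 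Partant d'une suite~$(a_{m})$ de~$A$ tendant vers un point~$x$ de l'adh\'erence de~$A$ dans~$W$, un argument de tiroirs permet de supposer, quitte \`a extraire, que tous les~$a_{m}$ appartiennent \`a un m\^eme~$\mathring{W}_{j}$, de sorte que~$x$ adh\`ere \`a~$\mathring{W}_{j}$. On travaille ensuite dans l'espace affino\"{\i}de~$W_{j}$, qui est s\'epar\'e : l'unicit\'e de la limite y montre que la valeur d'adh\'erence dans~$A$ de~$(a_{m})$ (fournie par l'$\omega$-compacit\'e de~$A$, et extraite au sein de~$\mathring{W}_{j}$) co\"{\i}ncide avec~$x$, d'o\`u~$x\in A$. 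Il en r\'esulte que~$A$ est ferm\'ee dans le quasi-compact~$W$, donc quasi-compacte, et finalement compacte puisque s\'epar\'ee.

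Le principal obstacle est pr\'ecis\'ement le contr\^ole de la position du point limite~$x$ et de la valeur d'adh\'erence relativement aux morceaux affino\"{\i}des~$W_{j}$ (lesquels, dans un espace non s\'epar\'e, ne sont pas ferm\'es en g\'en\'eral, et~$A$ elle-m\^eme n'est ni ferm\'ee dans~$B$ ni dans~$X$) : c'est l\`a qu'interviennent de fa\c{c}on essentielle la finitude du recouvrement par les~$\mathring{W}_{j}$ et l'$\omega$-compacit\'e de~$A$. Le reste de l'argument est formel.
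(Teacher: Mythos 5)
Your second paragraph — the case you set aside as hypothetical — is in fact the whole proof, and it is exactly the argument the paper intends. With the conventions in force in this text (compare the definition of $\omega$-compactness, which explicitly includes separation, in line with the Bourbaki usage followed throughout), the hypothesis that $B$ is compact already entails that $B$ is separated. One then simply runs the argument of the preceding proposition inside $B$: if $x$ lies in the closure of $A$ in $B$, Theorem~\ref{thm:suite} provides a sequence of points of $A$ converging to $x$; by $\omega$-compactness this sequence has a cluster point in $A$, which must coincide with $x$ because $B$ is separated; hence $A$ is closed in $B$, therefore compact. This is all the corollary requires, and it is why the paper states it without a separate proof.

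The long development you devote to a possibly non-separated $B$ is therefore unnecessary, and, as written, it does not go through. After extracting so that all the $a_m$ lie in the interior of a single affinoid domain $W_j$, you only know that the limit $x$ and the cluster point $y\in A$ of $(a_m)$ belong to the closure $\overline{W_j}$; in a non-separated ambient space, $W_j$, although compact, need not be closed, so nothing places $x$ or $y$ inside $W_j$, and you cannot invoke the separation of $W_j$ to conclude $y=x$. You yourself identify this as the principal obstacle and assert that the finiteness of the cover and the $\omega$-compactness of $A$ resolve it, but no argument is actually given: in that branch of your proof this is a genuine gap, not a formality. (Sequential compactness of $A$, which you establish at the outset, does not help here either, since a subsequence converging in $A$ still gives no control on whether its limit lies in $W_j$.)
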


\begin{cor}\label{cor:paracompact}
Soit~$X$ un espace $k$-analytique paracompact. Toute partie $\omega$-compacte de~$X$ est compacte.
\end{cor}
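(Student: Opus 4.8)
The plan is to deduce the statement from the preceding corollary, which already disposes of the case of an $\omega$-compact set sitting inside a compact one; so the whole point is to exhibit, starting from an $\omega$-compact part $A$ of the paracompact space $X$, a compact subset of $X$ that contains $A$.

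First I would fix a locally finite covering $(X_{\alpha})_{\alpha\in I}$ of $X$ by affinoid domains, which is available precisely because $X$ is paracompact (one may take this as the working definition, or extract such a covering from the description of paracompact analytic spaces as disjoint unions of spaces countable at infinity, \emph{cf.}~\cite{rouge}). The key step is then to show that $A$ meets only finitely many of the $X_{\alpha}$. Suppose not; then one can pick a sequence $(\alpha_{n})_{n\ge 0}$ of pairwise distinct indices and points $a_{n}\in A\cap X_{\alpha_{n}}$. Since $A$ is $\omega$-compact, the sequence $(a_{n})_{n\ge 0}$ has a cluster point $a$ in $A$; by local finiteness $a$ has a neighbourhood $U$ meeting only finitely many of the $X_{\alpha}$, and since the $\alpha_{n}$ are pairwise distinct, $U$ contains only finitely many of the $a_{n}$, contradicting that $a$ is a cluster point of $(a_{n})$.

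Granting this, $A$ is contained in the finite union $B=\bigcup_{\alpha\in J}X_{\alpha}$, where $J=\{\alpha\in I\mid A\cap X_{\alpha}\ne\emptyset\}$ is finite; being a finite union of affinoid domains, $B$ is a compact subset of $X$. Applying the previous corollary to the inclusion $A\subset B$ then yields that $A$ is compact, which is what we wanted.

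The only genuinely delicate point is the very first input, namely turning paracompactness of $X$ into a locally finite covering by compact (affinoid) subsets; once that is in place the argument is a short extraction combined with an already established corollary, and it is moreover robust in that a locally finite covering by arbitrary compact subsets would do just as well. Note in particular that no separatedness assumption on $X$ enters, so the preceding proposition on closedness of $\omega$-compact subsets is not needed for this deduction.
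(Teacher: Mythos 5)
Your argument is correct, and it takes a route genuinely, though mildly, different from the paper's. The paper first reduces to the case where $X$ has finitely many connected components (the components are open since $X$ est localement connexe, and an $\omega$-compacte partie ne peut en couper qu'un nombre fini), then invokes Bourbaki (locally compact $+$ paracompact) to get an exhaustion $(K_n)$ by compacts with $K_n$ contained in the interior of $K_{n+1}$, and uses $\omega$-compactness to trap $A$ inside some $K_N$; you instead produce a locally finite cover of $X$ by compact subsets and use $\omega$-compactness together with local finiteness to show that $A$ meets only finitely many members, hence lies in a finite (so compact) union. Both proofs then conclude by the preceding corollary, so the only difference is how the compact set containing $A$ is manufactured, and your extraction argument (pairwise distinct indices, a cluster point, a neighbourhood meeting finitely many members) is sound. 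The one point to tighten is exactly the input you flag: a locally finite cover by affinoid domains is not the definition of paracompactness and is neither automatic nor needed; what suffices, and what you should justify, is a locally finite cover by compacts, obtained for instance by covering $X$ (which is Hausdorff, paracompactness being understood in Bourbaki's sense, and locally compact as a $k$-analytic space) by relatively compact open sets, taking a locally finite open refinement and then the closures --- or, as you suggest, by extracting it from the decomposition into pieces countable at infinity, which is precisely the Bourbaki theorem the paper uses, so in that variant the two proofs rest on the same external input. Finally, your closing remark that no separation hypothesis enters is slightly off: paracompactness already includes separation in the convention used here, and the corollary you invoke is itself deduced from the proposition on closedness of $\omega$-compact subsets; but since you use that corollary as a black box exactly as the paper does, this does not affect the validity of your deduction.
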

\begin{proof}
Soit~$A$ une partie $\omega$-compacte de~$X$. L'espace~$X$ \'etant localement connexe, ses composantes con\-nexes sont ouvertes. Puisque la partie~$A$ est $\omega$-compacte, elle ne peut couper qu'un nombre fini de ces composantes. Nous pouvons donc supposer que~$X$ poss\`ede un nombre fini de composantes connexes. 

Puisque~$X$ est localement compact et paracompact, d'apr\`es~\cite{BourbakiTopo14}, I, \S 10, th\'eor\`eme~5, il existe une suite $(K_{n})_{n\ge 0}$ de parties compactes de~$X$ dont la r\'eunion recouvre~$X$. Puisque~$X$ est localement compact, d'apr\`es~{\it ibid.}, I, \S 9, proposition~15, nous pouvons supposer que, pour tout~$n$, on ait $K_{n} \subset \overset{\circ}{K_{n+1}}$. Puisque~$A$ est $\omega$-compacte, il existe un entier~$N$ tel que $A \subset K_{N}$. On conclut alors par le th\'eor\`eme pr\'ec\'edent. 
\end{proof}

En combinant le th\'eor\`eme~\ref{thm:suite} et le corollaire~\ref{cor:paracompact}, nous obtenons le r\'esultat voulu.

\begin{thm}
Toute partie d'un espace $k$-analytique paracompact est ang\'elique.
\end{thm}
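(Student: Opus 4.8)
Le plan consiste \`a combiner directement le th\'eor\`eme~\ref{thm:suite} et le corollaire~\ref{cor:paracompact}, sans avoir \`a introduire d'argument nouveau. Soient~$X$ un espace $k$-analytique paracompact et~$A$ une partie relativement $\omega$-compacte de~$X$ ; il faut v\'erifier les deux propri\'et\'es~{\it i)} et~{\it ii)} de la d\'efinition d'un espace ang\'elique.

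Je commencerais par la propri\'et\'e~{\it ii)}, qui est la plus facile et ne fait intervenir aucune hypoth\`ese de compacit\'e : un point de l'adh\'erence~$\bar{A}$ est \emph{a fortiori} un point adh\'erent \`a~$A$, donc, le th\'eor\`eme~\ref{thm:suite} affirmant que~$X$ est de Fr\'echet-Urysohn, il est limite d'une suite de points de~$A$. C'est le seul endroit o\`u le travail des sections pr\'ec\'edentes est r\'eellement sollicit\'e.

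Pour la propri\'et\'e~{\it i)}, il s'agit de montrer que~$\bar{A}$ est compacte. Or~$\bar{A}$ est une partie $\omega$-compacte de~$X$, ce qui r\'esulte de l'hypoth\`ese de relative $\omega$-compacit\'e de~$A$. (Si l'on pr\'ef\`ere la variante de cette d\'efinition demandant seulement que toute suite de points de~$A$ admette une valeur d'adh\'erence dans~$X$, il suffit de reprendre la d\'emonstration du corollaire~\ref{cor:paracompact} : celle-ci n'utilise de la $\omega$-compacit\'e de~$A$ que l'existence, dans~$X$, d'une valeur d'adh\'erence pour les suites $(a_{n})$ qui y sont construites, et elle fournit donc un compact~$K_{N}$ contenant~$A$, donc contenant~$\bar{A}$.) Le corollaire~\ref{cor:paracompact}, appliqu\'e \`a la partie~$\bar{A}$ de l'espace $k$-analytique paracompact~$X$, entra\^{\i}ne que~$\bar{A}$ est compacte, c'est-\`a-dire que~$A$ est relativement compacte. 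Les propri\'et\'es~{\it i)} et~{\it ii)} \'etant acquises, toute partie de~$X$ est ang\'elique.

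Je n'anticipe pas de v\'eritable obstacle : l'essentiel du contenu est d\'ej\`a contenu dans le th\'eor\`eme~\ref{thm:suite} et le corollaire~\ref{cor:paracompact}. Les seuls points auxquels pr\^eter attention sont la compatibilit\'e entre la formulation choisie de la relative $\omega$-compacit\'e et l'\'enonc\'e du corollaire~\ref{cor:paracompact}, et le fait, un peu surprenant, que la condition~{\it ii)} s'obtienne par la seule propri\'et\'e de Fr\'echet-Urysohn, ind\'ependamment de toute hypoth\`ese de relative $\omega$-compacit\'e.
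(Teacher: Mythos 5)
Votre démonstration est correcte et suit exactement la même voie que celle du texte, qui se réduit précisément à combiner le théorème~\ref{thm:suite} (propriété de Fréchet-Urysohn, donnant la condition~{\it ii)}) et le corollaire~\ref{cor:paracompact} (donnant la condition~{\it i)}). Votre remarque sur la compatibilité entre les deux lectures possibles de la relative $\omega$-compacité est un détail que le texte passe sous silence, mais elle ne change pas la nature de l'argument.
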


\begin{rem}
Pour un espace $k$-analytique dont la topologie est s\'epar\'ee, la paracompacit\'e est fr\'equemment v\'erifi\'ee en pratique : c'est notamment le cas des courbes analytiques ainsi que des analytifi\'ees de vari\'et\'es alg\'ebriques.
\end{rem}

\nocite{}
\bibliographystyle{alpha}
\bibliography{biblio}

\end{document}